\documentclass[11pt,reqno]{amsart}

\usepackage[dvipsnames,table]{xcolor}

\usepackage{amsmath,amssymb,amsthm,mathtools}
\usepackage{calligra}
\usepackage{siunitx}

\usepackage{graphicx}
\usepackage{caption}
\usepackage{subcaption}
\usepackage{wrapfig}
\usepackage{float}
\usepackage[percent]{overpic}
\usepackage{rotating} % Para girar a figura

\usepackage[left=2cm, right=2cm, top=3cm, bottom=2cm]{geometry}

\usepackage{tikz}
\usepackage{tikz-cd}
\usetikzlibrary{shapes, arrows, positioning, fit, backgrounds, matrix, arrows.meta, bending}
\usepackage{pgfplots}
\pgfplotsset{compat=1.18}

\usepackage{comment}
\usepackage{orcidlink}
\usepackage{hyperref} % Recomenda-se que hyperref seja o último pacote carregado

\newcommand{\R}{\mathbb{R}}

\newcommand{\Z}{\mathbb{Z}}
\newcommand{\N}{\mathbb{N}}
\newcommand{\es}{\mathbb{S}}

\theoremstyle{plain}
\newtheorem{theorem}{Theorem}[section]
\newtheorem{lemma}[theorem]{Lemma}
\newtheorem{proposition}[theorem]{Proposition}
\newtheorem{corollary}[theorem]{Corollary}

\theoremstyle{definition}
\newtheorem{definition}[theorem]{Definition}
\newtheorem{example}[theorem]{Example}

\theoremstyle{remark}
\newtheorem{remark}[theorem]{Remark}

\begin{document}

\title[Singularity Collisions]{Singularity Collisions through Homotopical Dynamical cancellation}

% --- Autores e Afiliações (Padrão amsart) ---

\author[D. V. S. Lima]{D. V. S. Lima}
\address{Center for Mathematics, Computing and Cognition, Federal University of ABC, São Paulo, Brazil.}
\email{dahisy.lima@ufabc.edu.br}
\thanks{D.V.S. Lima was partially supported by São Paulo Research Foundation (FAPESP) under Grants 2025/12435-9,  22/16455-6 and  19/21181-0.}

\author[K. A. de Rezende]{K. A. de Rezende}
\address{Institute of Mathematics, Statistics and Scientific Computing, University of Campinas, São Paulo, Brazil.}
\email{ketty@ime.unicamp.br}
\thanks{K. A. de Rezende was partially supported by São Paulo Research Foundation (FAPESP) under Grants  22/16455-6 and 18/13481-0.}

\author[D. Tenório]{D. Tenório}
\address{Center for Mathematics, Computing and Cognition, Federal University of ABC, São Paulo, Brazil.}
\email{denilson.tenorio@ufabc.edu.br}
\thanks{D. Tenório was financed by the Coordenação de Aperfeiçoamento de Pessoal de Nível Superior - Brazil (CAPES) - Finance code 001.}

\subjclass[2020]{Primary: 55U15, 58K45, 37B35, 55T05; Secondary: 37B30, 57Mxx.}
\keywords{Conley index, collisions, dynamical homotopical cancellations, singular surfaces, GGS singularities}

\begin{abstract}
We introduce collisions of invariant sets and, in particular, consider dynamical homotopical cancellations that preserve the homotopy type of the underlying singular manifold. We develop the theory of homotopical dynamical cancellation for generalized Gutierrez–Sotomayor (GGS) flows defined on GGS manifolds. This framework extends the classical cancellation theory of Morse flows to the singular setting. To effectively capture these homotopical cancellations, we introduce a GGS chain complex, which encodes essential dynamical and algebraic-topological information. Furthermore, we provide a spectral sequence analysis of a filtered GGS chain complex, demonstrating a bijective correspondence between algebraic cancellations of the modules of the spectral sequence and homotopical dynamical cancellations in the GGS flow. Several illustrative examples are presented, highlighting the practical applicability of the proposed framework.
\end{abstract}

\maketitle

\tableofcontents

\section{Introduction} 		   
\label{sec:intro}

The concept of cancellation was first systematically investigated in the context of Morse theory, specifically for a Morse function $f : M \to \mathbb{R}$, where $M$ is a compact, differentiable, n-manifold. Marston Morse devoted several papers to this topic, including \cite{morse1960existence, morse1964elimination, morse1965differential}. In particular, he introduced the idea of cancellation for critical points of index $0$ and $n$ in \cite{morse1960existence}, and later extended the concept to critical points of intermediate index in \cite{morse1964elimination, morse1965differential}.

The main idea is to modify a vector field in an arbitrarily small neighborhood of  two consecutive critical points and the trajectory connecting them, in order to produce a new vector field without these critical points, as illustrated in Figure \ref{fig_intr_00}.

\begin{figure}[!ht]
\centering
\begin{overpic}[width=9cm]{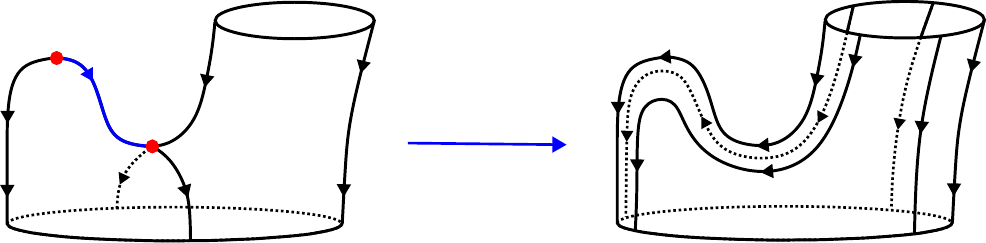}
\put(4.5,20){$x_1$}
\put(13.5,11.5){$x_2$}
\end{overpic}
\caption{Cancellation between non degenerate critical points  $x_1$ and $x_2$,
in dimension $2$.}
\label{fig_intr_00}
\end{figure}

The concept of cancellation became a powerful tool when Smale employed it in 1960 as part of his celebrated proof of the generalized Poincaré conjecture \cite{smale1961generalized}. A more didactic approach to cancellation in the smooth setting can be found in \cite{milnor1965lectures}.

In classical Morse theory, consecutive critical points can be canceled by suitably perturbing the flow. However, this approach fails in the non-smooth, singular context due to the presence of topological singularities. These singularities exhibit a rigidity in their local structure that is intrinsically linked to the topology of the underlying space and, in general, cannot be removed through perturbation. Eliminating them typically compromises essential topological, geometrical, and analytical properties.

Due to this rigidity of the structure, the notion of homotopical dynamical cancellation, or simply homotopical cancellation, was introduced in \cite{2021homotopical} as a generalization of classical cancellation theory to the singular setting, particularly for the class of singular manifolds known as Gutierrez–Sotomayor manifolds. In this framework, the singularities are not eliminated; rather, the manifold is homotopically deformed so that singularities merge, giving rise to a more dynamically intricate singularity. Thus, homotopically cancelling topological singularities entails not only modifications of the vector field but also a homotopical deformation of the underlying manifold itself. This class of singular manifolds was presented by Gutierrez and Sotomayor in \cite{GS}. Their goal was to generalize the theorems on structurally stable vector fields tangent to compact smooth 2-manifolds, originally developed by Peixoto and Peixoto in \cite{Peixoto1, Peixoto}. This class includes, in addition to regular points denoted by $\mathcal{R} = \{(x, y, z) \in \mathbb{R}^3 \mid z = 0\}$, the following types of singular points:
\begin{itemize}
    \item Cone: $\mathcal{C} = \{(x, y, z) \in \mathbb{R}^3 \mid z^2 - x^2 - y^2 = 0\}$
    \item Cross-cap: $\mathcal{W} = \{(x, y, z) \in \mathbb{R}^3 \mid zx^2 - y^2 = 0, z\geq0\}$
    \item Double crossing: $\mathcal{D} = \{(x, y, z) \in \mathbb{R}^3 \mid xy = 0\}$
    \item Triple crossing: $\mathcal{T} = \{(x, y, z) \in \mathbb{R}^3 \mid xyz = 0\}$
\end{itemize}

One of the key differences between homotopical cancellation and classical cancellation lies in the fact that, after cancellation, the resulting manifold preserves only the homotopy type of  $M$ whereas in the classical case it preserves the homeomorphism class of $M$.

This distinction highlights one of the main challenges in the singular setting. When homotopies are carried out along flow lines, even within smooth regions, the process may take us outside the class of Gutierrez–Sotomayor (GS) manifolds. As shown in \cite{2021homotopical}, even when cancellations were restricted to GS singularities of the same type, it was still necessary to introduce new ones, specifically, singularities with $n$-sheets, as illustrated in Figure~\ref{fig_intr_01}.

\begin{figure}[!ht]
\centering
\begin{overpic}[unit=1mm, scale=.25, width=8cm]{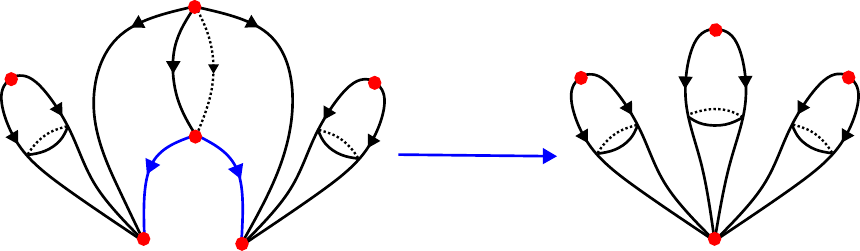}
\put(0,22){$x_1$}
\put(21,30){$x_2$}
\put(42,21){$x_3$}
\put(21,10){$x_4$}
\put(15,-2){$x_5$}
\put(26,-2.5){$x_6$}
\put(65,22){$x_1$}
\put(82,27.5){$x_2$}
\put(98,21.7){$x_3$}
\put(81.5,-2.5){$x'_4$}
\end{overpic}
\caption{Homotopical dynamical cancellation between $x_4$, $x_5$, $x_6$ and the flow lines connecting them generating a $3$-sheet cone singularity $x'_4$.}
\label{fig_intr_01}
\end{figure}

The use of algebraic tools, particularly spectral sequences,  has gained considerable traction in recent years as a powerful framework for extracting information from both smooth and singular dynamical systems, as well as for detecting  cancellation phenomena \cite{BLMdRS, bertolim2017algebraic, LMdRS, 2021homotopical}. Additionally, significant computational advances in spectral sequence analysis have been reported \cite{edelsbrunner2010computational, zomorodian2005topology}.

In this paper, our goal is to perform homotopical deformations of flow lines, and consequently, of the phase space itself. These deformations lead to collisions of singularities, reducing their total number by producing more degenerate ones, while preserving the homotopy type of the phase space. In a certain sense, this process moves in the opposite direction of a resolution of singularities. The end result is a phase space with fewer, yet more intricate, singularities.

In Section \ref{sec:collisions}, we introduce the concept of \textit{collisions of invariant sets}, which provides a  more comprehensive framework than standard notions of cancellation. Within this setting, we present a more precise formulation of \textit{homotopical dynamical cancellation}, recovering the definition given in \cite{2021homotopical} and we proof the Conley index is invariant by homotopical dynamical cancellation. Several illustrative examples of collisions and  homotopical dynamical cancellations are provided in this section. Furthermore, this approach ensures that a finite sequence of homotopical dynamical cancellations, preserves the homotopy type of the singular manifold in which they occur, as illustrated in Figure \ref{fig_intr_02}.

\begin{figure}[!ht]
\centering
\begin{overpic}[unit=1mm, scale=.25, width=9cm]{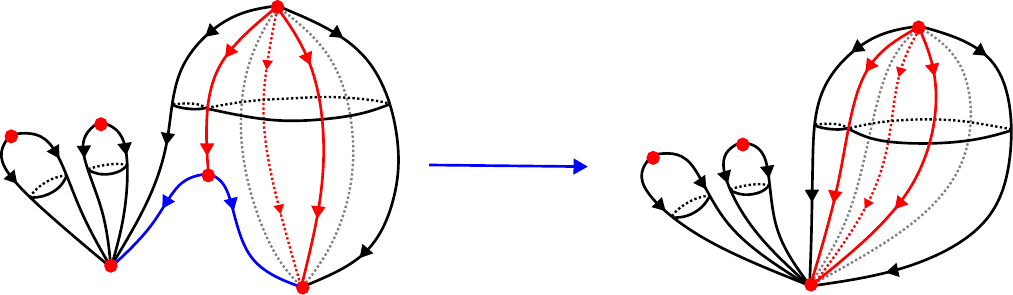}
\put(0,17.5){$x_1$}
\put(8,18.5){$x_2$}
\put(26,30){$x_3$}
\put(18,9){$x_4$}
\put(9.5,0){$x_5$}
\put(28,-2.5){$x_6$}
\put(62.5,15){$x_1$}
\put(71,16.5){$x_2$}
\put(89,28){$x_3$}
\put(78,-2.5){$x'_4$}
\end{overpic}
\caption{Homotopical dynamical cancellation between $x_4$, $x_5$, $x_6$ and the flow lines connecting them results in the new singularity $x'_4$.}
\label{fig_intr_02}
\end{figure}

In Section \ref{sec:GS}, we introduce the class of Generalized Gutierrez–Sotomayor (GGS) manifolds and their associated flows. The singular nature of these spaces presents substantial difficulties, as we no longer operate within a differentiable framework, unlike in the case of smooth manifolds. In this context, it becomes essential to carefully examine the structure and {\it nature} of GGS singularities. 

A major challenge lies in defining the intersection number for GGS singularities. In order to achieve this objective we developed in Section \ref{sec:morsification} one of the main theoretical results, namely the existence of a morsification for a GGS flow. Our primary contribution in this section is Theorem \ref{teo:morsificacao}, which addresses the challenges arising from the non-smoothness of GGS flows. To this end, we develop a method that associates to each GGS flow a smooth manifold endowed with a Morse–Smale flow, while preserving the essential dynamical features of the original system. A crucial trade-off in this construction is the unavoidable loss of the one-to-one correspondence between the singularities of the GGS flow and those of the Morse–Smale flow. The proof of Theorem \ref{teo:morsificacao} relies not only on a careful local analysis of each singularity but also on a study of the global dynamical behavior, particularly the connections established through the folds.

In Section \ref{sec:complex}, we define a GGS chain complex by establishing its generators as well as the corresponding differential. Presenting a chain complex in the context of this paper is neither simple nor straightforward. Beyond the previously discussed issue of non-smoothness, one must contend with a range of local singularity types. In the  proof of Theorem \ref{teo:complexo}, we  demonstrate that the composition of the boundary map defined in Definition \ref{def:GS_chain} is always null. This is achieved by analyzing all possible local singularities and employing algebraic manipulations to express the composition of our boundary map in terms of the boundary map of the associated Morse-Smale chain complex. The proof  highlights our contribution to the field and underscores the inherent challenges of establishing results within the GGS setting. Theorem \ref{teo_matriz} provides a more precise characterization of the boundary maps of our chain complex, showing that they can be completely represented by a single matrix whose entries belong to the set $\{ -1, 0, 1\}$, along with a clear description of the matrix’s rows and columns.

In Section \ref{sec:detecting_dynamical_homotopical_cancellation}, we examine the local algebraic effects arising from homotopical dynamical cancellations. 
We show that the chain complex is capable of detecting cancellations between generalized singularities of arbitrary type. Specifically, by “detecting,” we mean that if the intersection number of singularities $x_1$ and $x_2$ is nonzero, then there exists another singularity $x_3$ such that a cancellation involving $x_1$, $x_2$ and $x_3$ is possible, as established in Theorem \ref{teo_cancelamento}.

In Section \ref{sec:SSAproach_cancellation}, we present a global theorem illustrating the relevance of the chain complex introduced herein, which involves a spectral sequence analysis of a filtered GGS chain complex. Our primary contribution in this section is establish a bijection between the algebraic cancellations in this spectral sequence and the homotopical dynamical cancellations, these results are stated precisely in Theorem \ref{teo_bijecao_sequencia}. This section also  contains examples that demonstrate the effectiveness of the proposed framework.

Finally, our concluding remarks highlight relevant questions and directions for future research prompted by this work.

\section{Collisions of Isolated Invariant Sets}
\label{sec:collisions}
In this section, we introduce the definition of a collision of singularities, initially inspired by the physical phenomenon in which two black holes merge, causing their central singularities to combine into a single, more massive singularity. To establish a formal mathematical framework for this concept, we begin by outlining the general setting in which our discussion takes place.

Let $M$ be a Hausdorff topological space and $\varphi$ be a continuous flow on $M$, that is, a continuous map $\varphi:\R\times M\rightarrow M$ such that:
\begin{enumerate}
    \item[$(i)$] $\varphi(0,x)=x$ for all $x\in M$;
    \item[$(ii)$] $\varphi(s+t,x)=\varphi(s, \varphi(t,x))$ for all $s, t\in \R$ and $x \in M$. 
\end{enumerate}
A compact subset $N\subset M$ is an \textit{isolating neighborhood} if
\begin{align*}
\operatorname{Inv}(N) \coloneqq \{x \in N \mid \varphi(t,x) \in N, \  \forall t \in \mathbb{R}\} \subset \operatorname{int}(N),
\end{align*}
where $\operatorname{int}(N)$ denotes the interior of $N$. A subset $S \subset M$ is called an \textit{isolated invariant set} if $S=\operatorname{Inv}(N)$ for some isolating neighborhood $N$.

\begin{definition}
\label{def_collisions}
Let $S = \operatorname{Inv}(N)$ be an isolated invariant set with respect to a flow $\varphi$ on $M$. We say that $S'$ is a \textit{collision} of $S$ if there exist a compact set $N'$, a topological space $M'$ and a continuous flow $\varphi'$ on $M'$ with  $S' = \operatorname{Inv}_{\varphi'}(N')\subset \operatorname{int}(N')$ satisfying the following conditions:
\begin{enumerate}
    \item[$(i)$] $N'$ is obtained as a quotient space of $N$, i.e., $N' = N/\sim$ for some equivalence relation $\sim$ on $N$ and $\partial N'$ is homeomorphic to $\partial N$;
    \item[$(ii)$] $M'$ is the adjunction space $(M \setminus \operatorname{int}(N)) \cup_f N'$, where the attaching map $f: \partial N \to \partial N'$ is a homeomorphism;
    \item[$(iii)$] $\varphi'$ extends $\varphi$ from the complement of $N$, i.e., $\varphi' $ and $\varphi$ coincides on $M\setminus N$.
\end{enumerate}
\end{definition}

Since the collision process alters the initial topological space, the following remark clarifies the topology of the resulting space $M'$.

\begin{remark}
\label{remark_topology}
We now clarify the topologies used in the construction of $M'$ from Definition \ref{def_collisions}. The sets $N$ and $M \setminus \operatorname{int}(N)$ are endowed with the subspace topology inherited from $M$. The set $N'$ is given the quotient topology induced by the natural projection $\pi: N \to N' = N/\sim$. By item $(ii)$ of the definition, there is a homeomorphism between the boundaries, which we denote by $f: \partial N \to \partial N'$. The space $M'$ is the adjunction space $M' = (M \setminus \operatorname{int}(N)) \cup_f N'$, equipped with the standard final topology determined by this gluing construction.
\end{remark}

To ensure that limits of convergent sequences in $M' $ are unique, the space must be Hausdorff. Since the quotient construction does not, in general, preserve this property, an additional condition is required. The following proposition provides a sufficient condition on the equivalence relation $\sim$ that guarantees the space $M'$ is also Hausdorff.

\begin{proposition}
\label{prop_hausdorff}
Let $\varphi$ be a continuous flow on a Hausdorff space $M$. Suppose $M'$ is the space obtained by a collision from $(M, \varphi)$, involving the quotient map $\pi: N \to N' = N/\sim$ from the compact set $N \subset M$. If the quotient map $\pi$ is open and the graph of the equivalence relation, $G_\sim = \{(p_1, p_2) \in N \times N \mid p_1 \sim p_2\}$, is a closed subset of the product space $N \times N$, then the resulting space $M'$ is also a Hausdorff space.
\end{proposition}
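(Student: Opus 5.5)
The argument has two stages: first show that $N'$ is Hausdorff, then show that the adjunction space $M'$ is Hausdorff.

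\emph{Step 1: $N'$ is Hausdorff.} This is the standard quotient criterion: if $\pi$ is open and the graph $G_\sim$ is closed in $N\times N$, then $N/\sim$ is Hausdorff. Take $[p]\neq[q]$ in $N'$. Then $(p,q)\notin G_\sim$. Since $G_\sim$ is closed, there are open sets $U\ni p$ and $V\ni q$ in $N$ with $(U\times V)\cap G_\sim=\emptyset$. Because $\pi$ is open, $\pi(U)$ and $\pi(V)$ are open neighborhoods of $[p]$ and $[q]$. They are disjoint: a common point would give $u\in U$ and $v\in V$ with $u\sim v$, contradicting the choice of $U$ and $V$. Note that $N$ is compact and Hausdorff, being a compact subset of the Hausdorff space $M$, so $N'=\pi(N)$ is compact. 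Hence $N'$ is compact Hausdorff, and $\partial N'$ is closed in $N'$.

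\emph{Step 2: the adjunction space.} Set $A=M\setminus\operatorname{int}(N)$; it is closed in $M$, hence Hausdorff. Then $M'$ is the pushout of $A\supset\partial N\xrightarrow{f}\partial N'\subset N'$ along a homeomorphism. Equivalently, $M'$ is the union of the two closed images of $A$ and $N'$, glued along $\partial N\cong\partial N'$, and a set is open in $M'$ exactly when both of its preimages are open.

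Given distinct points $x,y\in M'$, I split into cases.
\begin{itemize}
\item[(a)] Both points lie in the interior part $N'\setminus\partial N'$, or both in $A\setminus\partial N$. Separate them within that open piece, which is Hausdorff. For the $N'$ case, $N'\setminus\partial N'$ is open in $M'$, since its preimage in $A$ is empty. For the $A$ case, use $A\setminus\partial N=M\setminus N$, which is open in $M$.
\item[(b)] One point lies in the interior of $N'$ and the other in $A\setminus\partial N$. Separate them by $\operatorname{int}N'$ and $M\setminus N$ respectively.
\item[(c)] At least one point lies on the gluing boundary. This is the main case.
\end{itemize}

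For case (c) I would use a regularity/normality argument. The spaces $N'$ and $N$ are compact Hausdorff, hence normal. For $A$, I would separate within $M$ using compactness of $\partial N$: in a Hausdorff space, a point and a disjoint compact set can be separated. Concretely, for a boundary point $z$ and a point $w$, choose disjoint open sets $U_A,V_A$ in $A$ separating $z$ from $w$ (or from the relevant compact set). Then transport $U_A\cap\partial N$ through $f$ to $\partial N'$, and extend it to an open set $U_{N'}$ of $N'$ whose trace on $\partial N'$ is exactly $f(U_A\cap\partial N)$. Do the same for $V$. The extensions must also be disjoint. I would build them by taking an open set $W\subset N'$ with trace $f(U_A\cap\partial N)$ and removing the closure of the $V$-part. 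This uses normality of the compact Hausdorff space $N'$ to shrink neighborhoods so that their closures meet $\partial N'$ only in the prescribed traces. The glued sets $U_A\cup U_{N'}$ and $V_A\cup V_{N'}$ are then open in $M'$ and disjoint.

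\textbf{Main obstacle.} The obstacle is exactly this compatible extension in case (c). The $A$-side is not assumed locally compact or normal, so I would do the separations on the $A$-side first, using only Hausdorffness plus compactness of $\partial N$. I would do all the shrinking on the compact $N'$ side. A standard fact (e.g.\ for pushouts along a closed embedding into a normal space, with the other piece Hausdorff) would be invoked if the bookkeeping becomes heavy.
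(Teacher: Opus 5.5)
Your Step 1 and cases (a)--(b) are correct and follow the paper's outline. The paper gets Hausdorffness of $N'$ from Lee's Corollary 3.58 and then simply asserts that attaching two Hausdorff spaces along a closed subset gives a Hausdorff space. The problem is the mechanism you propose for case (c) when both $z\neq w$ lie on the gluing boundary.

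\emph{Why the proposed mechanism fails.} You fix $U_A,V_A$ first. You then want disjoint open $U_{N'},V_{N'}\subset N'$ whose traces on $\partial N'$ are exactly $f(U_A\cap\partial N)$ and $f(V_A\cap\partial N)$. Those traces are disjoint, relatively open subsets of the closed set $\partial N'$, i.e.\ merely separated sets. Normality of $N'$ does not separate separated sets; that needs hereditary normality, which compact Hausdorff spaces can lack. Concretely, in $[0,\omega_1]\times[0,\omega]$ take the closed set $([0,\omega_1]\times\{\omega\})\cup(\{\omega_1\}\times[0,\omega])$. Its relatively open pieces $[0,\omega_1)\times\{\omega\}$ and $\{\omega_1\}\times[0,\omega)$ have no disjoint open neighbourhoods, by the Tychonoff plank argument. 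Two further points fail as well:
\begin{itemize}
\item Making closures meet $\partial N'$ only in the prescribed traces is impossible unless the traces are clopen in $\partial N'$. Indeed $\overline{W}\cap\partial N'$ always contains the closure of $W\cap\partial N'$ in $\partial N'$.
\item Doing all the shrinking on the $N'$ side is not allowed. A glued set is open in $M'$ only when its two sides have matching traces.
\end{itemize}

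\emph{The missing idea.} Shrink on both sides down to a common trace. This needs neither compactness nor normality, only that $\partial N$ is closed in $A$ and $\partial N'$ is closed in $N'$; the latter holds because $\partial N'\cong\partial N$ is compact and $N'$ is Hausdorff.
\begin{itemize}
\item Take disjoint open $U_A\ni z$, $V_A\ni w$ in $A$ and disjoint open $U'\ni f(z)$, $V'\ni f(w)$ in $N'$.
\item Put $T_U=f(U_A\cap\partial N)\cap U'$ and $T_V=f(V_A\cap\partial N)\cap V'$.
\item The sets $U_A\setminus(\partial N\setminus f^{-1}(T_U))$ and $U'\setminus(\partial N'\setminus T_U)$ are open, since you remove closed subsets of a closed boundary. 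They contain $z$ and $f(z)$, and their traces are $f^{-1}(T_U)$ and $T_U$, so they glue to an open set of $M'$.
\item Do the same for $V$. Disjointness is inherited from $U_A\cap V_A=\emptyset=U'\cap V'$.
\end{itemize}
The subcases of (c) with only one point on the boundary do work with a plain extension of the trace, as you suggest. This elementary lemma is exactly what the paper's last sentence uses without proof. It depends on $f$ being a homeomorphism between closed subspaces: for a general attaching map the glued space need not be Hausdorff, e.g.\ collapsing a closed set of a non-regular Hausdorff space to a point.
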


\begin{proof}
We need to show that under the hypotheses, the space 
\begin{align*}
    M'=(M \setminus \operatorname{int}(N)) \cup_{f} N'
\end{align*}
is Hausdorff. 

As established in Remark \ref{remark_topology}, the component spaces carry their standard topologies. Specifically, $M \setminus \operatorname{int}(N)$ and $N$ are endowed with the subspace topology inherited from $M$:
\begin{align*}
    \tau_{M \setminus \operatorname{int}(N)} &= \{ U\subset M \setminus \operatorname{int}(N) \mid U = (M \setminus \operatorname{int}(N)) \cap V \text{ for some open set } V\subset M \}; \\
    \tau_{N} &= \{ U\subset N \mid U= N\cap V\ \text{for some open set } V\subset M \}.
\end{align*}
We first confirm that the component spaces are Hausdorff. Let $x_1, x_2$ be two distinct points in $M \setminus \operatorname{int}(N)$. Since $M$ is Hausdorff, there exist disjoint open sets $V_1, V_2 \subset M$ such that $x_1 \in V_1$ and $x_2 \in V_2$. By the definition of the subspace topology, the sets $U_1 = V_1 \cap (M \setminus \operatorname{int}(N))$ and $U_2 = V_2 \cap (M \setminus \operatorname{int}(N))$ are open in $M \setminus \operatorname{int}(N)$. They contain $x_1$ and $x_2$, respectively, and are disjoint. Thus, $M \setminus \operatorname{int}(N)$ is Hausdorff. An identical argument shows that $N$ is also Hausdorff.

By hypothesis, the quotient map $\pi: N \to N'$ is open and the relation $\sim$ is a closed subset of $N \times N$. Therefore, by Corollary 3.58 of \cite{lee2000introduction}, the quotient space $N'=N/\sim$ is Hausdorff.

Finally, we show that the adjunction space $M' = (M \setminus \operatorname{int}(N)) \cup_f N'$ is Hausdorff. We have established that the component spaces $M \setminus \operatorname{int}(N)$ and $N'$ are both Hausdorff. The space $M'$ is formed by attaching these two spaces along the boundary $\partial N$. Since $N$ is a compact subset of a Hausdorff space, its boundary $\partial N$ is a closed subset of $M \setminus \operatorname{int}(N)$. Because we are attaching two Hausdorff spaces along a closed subset, the resulting adjunction space $M'$ is also Hausdorff.
\end{proof}

A broad class of operations on a dynamical system $(M, \varphi)$ can be regarded as collisions.

\begin{example}
\label{exemplo_colisoes_s2}
As an example, consider a surface $M$ homeomorphic to the $2$-sphere $\es^2$, with a continuous flow $\varphi$ generated by the negative gradient of a height function, $-\nabla f$, as depicted in Figure \ref{fig_collisions_01}.

\begin{figure}[!ht]
\centering
\begin{overpic}[unit=1mm, scale=.25, width=10cm]{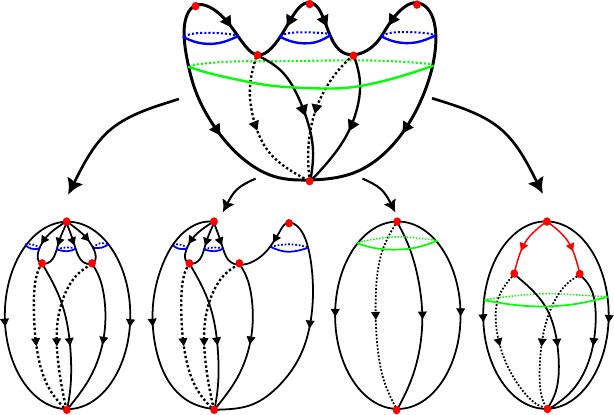}
\put(25,62){$\textcolor{blue}{N_1}$}
\put(72,57){$\textcolor{green}{N_2}$}
\put(38,63.5){$u_1$}
\put(42,63.5){$u_2$}
\put(54,63.5){$u_3$}
\put(59,63.5){$u_4$}
\put(30,68){$x_1$}
\put(49,68){$x_2$}
\put(67,68){$x_3$}
\put(41,60.5){$y_1$}
\put(56,60.5){$y_2$}
\put(49.5,35.5){$z$}
\put(11,45){$\sim_1$}
\put(33,38){$\sim_2$}
\put(62,38){$\sim_3$}
\put(85,45){$\sim_4$}
%%%%%%%%%%%%%%%%%%%%
\put(10,33){$x$}
\put(2.8,23){$y_1$}
\put(12.7,21){$y_2$}
\put(10,-2){$z$}
\put(-3,30.5){\tiny{$\textcolor{blue}{N_1/\sim_1}$}}
%%%%%%%%%%%%%%%%%%%%
\put(36,30){$x'$}
\put(42,31){$x_3$}
\put(26.5,22.5){$y_1$}
\put(36.5,21){$y_2$}
\put(34,-2){$z$}
\put(20,30.5){\tiny{$\textcolor{blue}{N_1/\sim_2}$}}
%%%%%%%%%%%%%%%%%%%%
\put(60,28){$x''$}
\put(64,-2.5){$z$}
\put(69,30.5){\tiny{$\textcolor{green}{N_2/\sim_3}$}}
%%%%%%%%%%%%%%%%%%%%
\put(88,32.5){$x'''$}
\put(82,19.2){$y_1'$}
\put(92,19.2){$y_2'$}
\put(88.5,-2.5){$z$}
\put(99.5,22){\tiny{$\textcolor{green}{N_2/\sim_4}$}}
%%%%%%%%%%%%%%%%%%%%
\end{overpic}
\caption{Examples of different collisions in $(M,\varphi)$.}
\label{fig_collisions_01}
\end{figure}

This flow has three repelling fixed points, $x_1$, $x_2$ and $x_3$. Let $N_1 \subseteq M$ be a compact isolating neighborhood composed of three disjoint components, one enclosing $x_1$, other $x_2$ and other $x_3$. Thus, the isolated invariant set is  $S = \operatorname{Inv}_{\varphi}(N_1) = \{x_1, x_2, x_3\}$. We can perform a collision by defining an equivalence relation $\sim_1$ on $N_1$ that identifies the three points, $x_1, x_2, x_3$. This operation collapses the set $S = \{x_1, x_2, x_3\}$ into a new invariant set $S' = \{x\}$ in the resulting space.

Consider the same compact set $N_1$ from the preceding paragraph, whose invariant set $\operatorname{Inv}_{\varphi}(N_1)=\{ x_1, x_2, x_3\}=S$. Let $\sim_2$ be the equivalence relation on $N_1$ that identifies $x_1$ and $x_2$. In the quotient space $N_1/ \sim_2$ we define a repelling flow $\varphi '$ whose invariant set is $\operatorname{Inv}_{\varphi '}(N_1/ \sim_2)=\{x', x_3\}$ and is a continuous extension of $\varphi$ from the complement of $N_1$.

Alternatively, consider an isolating neighborhood $N_2$ whose invariant set $S = \operatorname{Inv}_{\varphi}(N_2)$ is the connected set consisting of the repelling points $x_1$, $x_2$ and $x_3$, the saddle points $y_1$, $y_2$, and the trajectories $u_1$, $u_2$, $u_3$ and $u_4$ connecting them. On this set, we can illustrate three different types of collisions as follows.
\begin{enumerate}
    \item One can perform  a \textit{trivial collision}. In other words, we consider the trivial equivalence relation, $N=N'$ and $ \varphi ' = \varphi$.   
    \item  One can perform a \textit{total collision}, that is, a collision defined by an equivalence relation $\sim_3$ on $N_2$ that identifies all points of $S$. This collapses  $S$ to a single repelling fixed point $x''$ in the new space, so that $$S'' = \operatorname{Inv}_{\varphi''}(N_2/\sim_3) = \{x''\}.$$
    \item A further collision can be achieved by identifying entire trajectories. We define an equivalence relation $\sim_4$ on $N_2$ that identifies the trajectory $u_1$ with $u_2$ and, separately, $u_3$ with $u_4$. This procedure effectively ``glues''  each corresponding pairs of flow lines. Consequently, in the quotient space $N_2/\sim_4$, the invariant set $S^{'''}$ is composed of a repelling point $x'''$, two saddle points $y'_1$ and $y'_2$, and the two newly formed trajectories connecting them.
\end{enumerate}
\end{example}

Under the same hypotheses and notations as in Definition \ref{def_collisions}, we now define a specific type of collision based on the topological concept of a deformation retraction. 

\begin{definition}
\label{definition_canc}
An invariant set $S'$ is called a \textit{homotopical dynamical cancellation} of $S$ if it can be obtained through a collision of $S$, where the equivalence relation $\sim$ on $N$ is induced by a deformation retraction of $S$ onto one of its subsets, $A \subset S$.
\end{definition}

In Example~\ref{exemplo_colisoes_s2}, only trivial and total collisions in $N_2$ can be interpreted as a homotopical dynamical cancellation.

\begin{example}
We illustrate the concepts with two examples in $\mathbb{R}^2$, as depicted in Figure \ref{fig_collisions_monkey}.

\begin{enumerate}
    \item An isolated invariant set $S_1 = \operatorname{Inv}(N_1)$ consists of two hyperbolic saddles, $x_1$ and $x_2$, and a flow line $u$ connecting them.
    \item An isolated invariant set $S_2 = \operatorname{Inv}(N_2)$ consists of two hyperbolic saddles, $x_3$ and $x_4$, with no connecting orbit between them.
\end{enumerate}

\begin{figure}[!ht]
\centering
\begin{overpic}[unit=1mm, scale=.25, width=10cm]{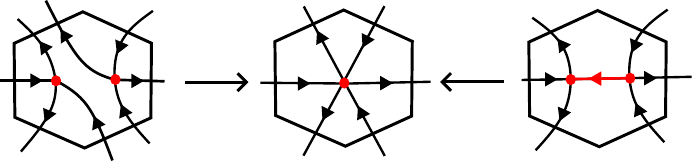}
\put(83,13.5){$x_1$}
\put(92,13.5){$x_2$}
\put(99,20){$N_1$}
\put(86,8){$u$}
%%%%%%%%%%%%%%%%%%%%%%
\put(51.5,12){$x$}
%%%%%%%%%%%%%%%%%%%%%%
\put(18,14){$x_4$}
\put(-2,20){$N_2$}
\put(9, 12.5){$x_3$}
\end{overpic}
\caption{Collisions producing a monkey saddle.}
\label{fig_collisions_monkey}
\end{figure}

In the first case, we can apply a homotopical dynamical cancellation. This is achieved by an equivalence relation that corresponds to a deformation retraction of the entire invariant set $S_1$ onto a single point. The resulting collision collapses $S_1$ into a new invariant set $S'=\{x'\}$, which may be a degenerate saddle (sometimes called a ``monkey saddle'').

In the second case, a different type of collision can be used to create the same monkey saddle. This is achieved by an equivalence relation on $N_2$ that is induced by the deformation retraction of a $2$-dimensional region between the saddles onto a $1$-dimensional arc. This operation effectively glues the saddles together, producing a new invariant set $S''$ that is topologically equivalent to $S'$.

Note that this second operation is not a homotopical dynamical cancellation, because the deformation retraction occurs on a subset of $N_2 $, not on the invariant set $S_2$ itself.
\end{example}

\begin{example}
The classical Neimark-Sacker bifurcation occurs in a one-parameter family of maps $x \mapsto F(x; \rho)$, where $x \in \mathbb{R}^2$ is the state variable, $\rho \in \mathbb{R}$ is the bifurcation parameter, and $x=0$ is a fixed point. Under certain non-degeneracy conditions, this bifurcation is characterized by the creation of a unique, invariant closed curve that emerges from the fixed point. For a detailed treatment, see \cite{Murilo_R} and Theorem 4.6 of \cite{kuznetsov1998elements}.

The transition between the two states of the Neimark-Sacker bifurcation can be interpreted a collision in two distinct ways, as illustrated in Figure \ref{fig_collisions_sacker}.

\begin{figure}[!ht]
\centering
\begin{overpic}[unit=1mm, scale=.25, width=8cm]{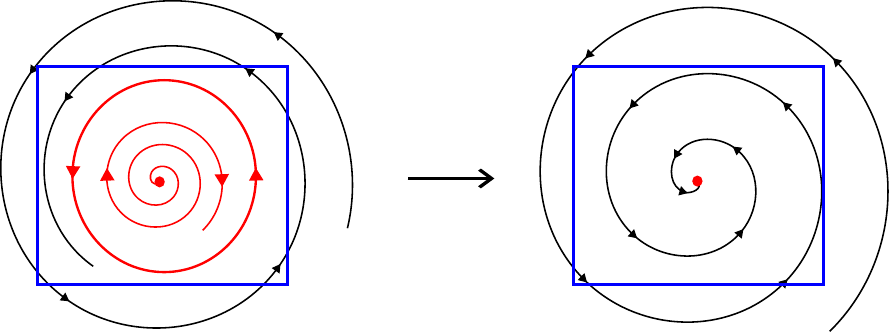}
\put(29,12){$\gamma$}
\put(17,18.65){$x$}
%%%%%%%%%%%%%%%%%%%%%%%%%%%
\put(77,18){$x$}
\put(0,30){$\textcolor{blue}{N}$}
\put(60,30){$\textcolor{blue}{N}$}
\end{overpic}
\caption{Collision in Neimark-Sacker bifurcation.}
\label{fig_collisions_sacker}
\end{figure}

First, it can be viewed as a homotopical dynamical cancellation acting on the invariant set $S$, composed of the periodic orbit $\gamma$ and all bounded trajectories encapsulated by $\gamma$ (topologically, $S$ is a $2$-dimensional closed disk whose boundary is  $\gamma$). In this interpretation, one can realize a deformation retraction of the disk, collapsing the entire disk, and the periodic orbit $\gamma$ along with it, onto the fixed point $x=0$. 

The second interpretation also regards it as a homotopical dynamical cancellation, but here the isolating neighborhood $N$ is kept fixed while the flow $\varphi$ inside $N$ is  continuously perturbed.
\end{example}

The class of collisions arising  from homotopical dynamical cancellations has a 
special property: the Conley index remains invariant under such collisions. Originally introduced as a generalization of the Morse index, the Conley index has the notable advantage of being well-defined for any continuous topological dynamical system. This makes it particularly suitable for studying certain dynamical behaviors in topological spaces that may exhibit singularities. In what follows, we introduce only the notations and definitions necessary for our purposes; for a full exposition, see \cite{livro_coloquio} and \cite{Conley}.

Let $S\subseteq M$ be an isolated invariant set. A pair of compact sets $(L_1,L_2)$ is an \textit{index pair} of $S$ if $L_2\subseteq L_1$ and: 
\begin{enumerate}
    \item[$(i)$] $\overline{L_1 \setminus L_2}$ is an isolating neighborhood for $S$ in $M$; 
    \item[$(ii)$] $L_2$ is \textit{positively invariant} in $L_1$, that is, if $x\in L_2$ and $\varphi([0,T],x)\subseteq L_1$ then $\varphi([0,T],x)\subseteq L_2$; 
    \item[$(iii)$] $L_2$ is the \textit{exit set} of the flow in $L_1$, that is, if $x\in L_1$ and $\varphi([0,\infty], x)\not\subseteq L_1$ then there exists $T> 0$ such that $\varphi([0,T],x)\subset L_1$ and $\varphi(T,x)\in L_2$. 
\end{enumerate}
The \textit{homotopy Conley index} of an isolated invariant set $S$ is defined by: 
\begin{align*}
    h(S)\coloneqq [L_1/L_2, *],
\end{align*}
that is, the homotopy type of the pointed space where: $(L_1,L_2)$ is an index pair for $S$ and $*$ is the equivalent class of $L_2$. The \textit{homology Conley index} of $S$ is defined as $CH_*(S)\coloneqq\widetilde{H}_*(h(S))$, where $\widetilde{H}_*$ denotes the reduced singular homology over $\Z$. 

In \cite{Conley}, the existence of an index pair for any isolated invariant set is established. Moreover, it is shown that the homotopy type of the pointed space $(L_1/L_2, *)$ does not depend on the  chosen index pair $(L_1,L_2)$ of $S$. Hence, the Conley index is well-defined. 

\begin{proposition}
\label{prop_conley}
Consider $M$ a Hausdorff space and $\varphi$ be a continuous flow on $M$. Let $S=\operatorname{Inv}_{\varphi}(\overline{L_1\setminus L_2})$, where $(L_1,L_2)$ is an index pair for $S$. If $S'$ is obtained from $S$ through a homotopical dynamical cancellation and $M'$ is a Hausdorff space, then  $h(S)=h(S')$.
\end{proposition}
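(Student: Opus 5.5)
The plan is to push the index pair $(L_1,L_2)$ through the quotient map that defines the collision, and show that the quotient pair is an index pair for $S'$ whose pointed quotient has the same homotopy type as $L_1/L_2$. Since the Conley index does not depend on the chosen index pair \cite{Conley}, this yields $h(S)=h(S')$.

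\textbf{Setup.} Take $N=\overline{L_1\setminus L_2}$ as the isolating neighborhood in Definition \ref{def_collisions}. By Definition \ref{definition_canc}, $\sim$ is induced by a deformation retraction $r_t\colon S\to S$, $t\in[0,1]$, with $r_0=\mathrm{id}$ and $r_1(S)=A\subset S$. Concretely, $x\sim y$ if and only if $x=y$, or $x,y\in S$ and $r_1(x)=r_1(y)$. Thus only points of $S\subset\operatorname{int}(N)$ are identified, and $S'=\pi(S)\cong A$. Let $\Pi\colon L_1\to L_1'=L_1/\sim$ be the extension of $\pi$ by the identity on $L_2$; this is well defined because $S\cap L_2=\emptyset$. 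Set $L_2'=\Pi(L_2)\cong L_2$.

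\textbf{Step 1: $(L_1',L_2')$ is an index pair for $S'$.} Condition $(iii)$ of Definition \ref{def_collisions} says that $\varphi'=\varphi$ off $N$, and $\partial N'\cong\partial N$. Hence positive invariance and the exit set property, which only involve the behavior near $L_2$ and $\partial N$, transfer directly. The isolation $\operatorname{Inv}_{\varphi'}(N')=S'\subset\operatorname{int}(N')$ is part of the collision hypothesis. Compactness of $L_1'$ follows from continuity of $\Pi$, and $M'$ is Hausdorff by hypothesis, which we need because Conley's theory requires it.

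\textbf{Step 2: $L_1/L_2\simeq L_1'/L_2'$ as pointed spaces.} Since $L_1'/L_2'=(L_1/L_2)/\sim$, it suffices to show that collapsing the fibres of $r_1$ inside $S$ is a homotopy equivalence of $L_1/L_2$. I expect this to be the main obstacle. The retraction $r_t$ lives only on $S$, and a quotient by an arbitrary relation need not preserve homotopy type. My first approach is to extend the homotopy $r_t$ from $S$ to a homotopy $R_t\colon L_1\to L_1$ that is stationary on $L_2\cup\partial N$. Compactness of $S$ inside the open set $\operatorname{int}(N)$ gives a collar, and a tapering (Urysohn-type) function interpolates between $r_t$ on $S$ and the identity near $\partial N$. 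This requires $(L_1,S)$ to have the homotopy extension property, which I would assume holds, since $S$ is a neighbourhood deformation retract in the GGS setting.

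With the extension in hand, $R_1$ is constant on each $\sim$-class, so it factors as $R_1=\bar g\circ\Pi$ with $\bar g\colon L_1'\to L_1$. One checks that $\Pi\circ\bar g\simeq\mathrm{id}$ via the homotopy $\Pi\circ R_t$ descended to the quotient, and that $\bar g\circ\Pi=R_1\simeq \mathrm{id}$. All these homotopies fix $L_2$, so they descend to pointed homotopy equivalences $L_1/L_2\simeq L_1'/L_2'$. Alternatively, if HEP is available, the standard fact that collapsing a contractible-fibre subcomplex preserves homotopy type can be invoked directly.
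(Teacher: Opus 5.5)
Your route is the paper's. It also takes $(N'\cup L_2,L_2)$ (your $(L_1',L_2')$) as an index pair for $S'$ and justifies the index-pair axioms by the fact that $\varphi'$ extends $\varphi$. It then concludes from independence of the index pair together with a homotopy equivalence $N'\cup L_2\simeq L_1$. The paper simply asserts that equivalence ``since $N'$ is obtained via a deformation retract'' and passes to the quotients by $L_2$ without comment. Your insistence that all homotopies fix $L_2$ is the right way to get a \emph{pointed} equivalence, so Step~2 goes beyond the paper.

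As written, however, Step~2 contains a step that fails. The homotopy $\Pi\circ R_t$ descends to $L_1'$ only if every $R_t$ maps each $\sim$-class into a single $\sim$-class, i.e.\ only if $r_1\circ r_t$ is constant on the fibres of $r_1$. This holds at $t=0,1$ but not in between. Take $S=[-1,1]^2$, $A=[-1,1]\times\{0\}$, $r_1(x,y)=(\min\{x+y^2,1\},0)$ and $r_t$ the straight-line homotopy. On the fibre $\{x=-y^2\}$ one gets $r_1(r_t(x,y))=(-t(1-t)y^2,0)$, which varies with $y$. When $A$ is a single point, $S$ is one class and $r_t(S)\subset S$, so your argument is fine there. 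That is the only case the paper actually uses (collapsing the path $\delta$ to $x'$), but it is not the generality you set up.

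To close the gap, you have two options. You can cite the gluing theorem: pushing out the homotopy equivalence $r_1\colon S\to A$ along the cofibration $S\hookrightarrow L_1$ yields a homotopy equivalence $L_1\to L_1\cup_{r_1}A=L_1'$. Or, to keep control of $L_2$, argue by hand:
\begin{itemize}
\item Replace $r_t$ by the concatenation $r'$ of $r_t$ with $r_1\circ r_{1-t}$, and extend $r'$ to $R'_t$ as before; $R'_1$ still factors as $\bar g\circ\Pi$.
\item Now $r_1\circ r'$ is a path of maps followed by its reverse, hence homotopic rel endpoints to the constant homotopy at $r_1$.
\item The HEP for $(L_1\times I,\,L_1\times\partial I\cup S\times I)$ then deforms $\Pi\circ R'_t$, keeping $t=0,1$ fixed, into a homotopy equal to $\Pi$ on $S$ for every $t$. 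That homotopy descends and gives $\Pi\circ\bar g\simeq\mathrm{id}$.
\item Tapering with a Urysohn function equal to $1$ near $S$ and $0$ near $L_2$ keeps every homotopy stationary on $L_2$.
\end{itemize}

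Two further points. Your fallback about collapsing ``contractible-fibre'' subcomplexes is not the right fact, since fibres of $r_1$ need not be contractible or even connected; the gluing theorem needs nothing about fibres. Also, HEP for $(L_1,S)$ is a genuine extra hypothesis, because the proposition is stated for arbitrary Hausdorff $M$, not only GGS manifolds. The paper's proof tacitly relies on the same regularity, so this is no worse than the paper, but you should state it as an assumption rather than take it for granted.
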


\begin{proof}
\begin{comment}
Let $(L_1,L_2)$ be an index pair for $S$. By definition, we have
\begin{align*}
    S = \operatorname{Inv}_{\varphi}\left( \overline{ L_1\setminus L_2}\right).
\end{align*}    
\end{comment}
By hypothesis, $S'$ is obtained through a homotopical dynamical cancellation of $S$. This means that the collision is obtained via a deformation retract. Let $\sim$ be the equivalence relation defining this retract. The new isolating neighborhood is then given by
\begin{align*}
    N' = \overline{L_1\setminus L_2} / \sim.
\end{align*}
The new dynamics take place in the space $M' = \left( M \setminus \operatorname{int}{(\overline{L_1\setminus L_2} )} \right) \cup_{f} N'$, where we have a topological dynamical system $\varphi' : \mathbb{R}\times M' \longrightarrow M'$ such that:
\begin{align*}
    S' = \operatorname{Inv}_{\varphi'}(N').
\end{align*}
We claim that the compact pair $(N' \cup L_2, L_2)$ is an index pair for $S'$ in the space $M'$.  Indeed, we verify the condition for the invariant set:
\begin{align*}
    \operatorname{Inv}_{\varphi'} \left( \overline{(N' \cup L_2) \setminus L_2} \right) &= \operatorname{Inv}_{\varphi'} (\overline{N'}) \\
    &= \operatorname{Inv}_{\varphi'} (N') \\
    &= S'.
\end{align*}
Since the flow $\varphi'$ is continuous and extends the original flow $\varphi$ on $\overline{L_1\setminus L_2}$, it follows that $L_2$ is positively invariant in $N' \cup L_2$. Furthermore, $L_2$ serves as the exit set for the flow $\varphi'$ with respect to the neighborhood $N' \cup L_2$.

A fundamental result from \cite{Conley} states that the Conley index is well-defined, meaning it is independent of the choice of index pair for a given isolated invariant set. Since $N' = \overline{L_1 \setminus L_2} / \sim$ is obtained via a deformation retract, then we have the homotopy equivalent: 
\begin{align*}
    (N' \cup L_2) \simeq L_1.
\end{align*}
This leads to the following:
\begin{align*}
    h(S') &= \left[ (N' \cup L_2)/L_2, * \right] \\
    &= \left[ L_1/L_2, * \right] \\
    &= h(S).
\end{align*}
Therefore, $h(S')=h(S)$.
\end{proof}

Returning to Example \ref{exemplo_colisoes_s2}, one can use Proposition \ref{prop_conley} to guarantee  that the collision $N_1/\sim_1$ is not a homotopical dynamical cancellation, since $h(S) \neq h(S')$. 
In fact, it suffices to take the index pair $(N_1, \partial N_1)$ for $S = \{x_1, x_2, x_3\}$ from which
\begin{align*}
    h(S)=[N_1/ \partial N_1 , *] \simeq \es^2 \vee \es^2 \vee \es^2 .
\end{align*}
On the other hand, we take $(N'_1, \partial N'_1)$ for $S'=\{x\}$ and we have
\begin{align*}
    h(S' )= [N'_1/ \partial N'_1 , *] \simeq \es^2 \vee \es^2\vee \es^2 \vee \es^1 \vee \es^1.
\end{align*}

 Proposition \ref{prop_conley} establishes a sufficient, but not necessary condition.
%in order to have $h(S)=h(S')$. 
To see this, consider the collision in Example \ref{exemplo_colisoes_s2} given in $N_2$ by the equivalence relation $\sim_4$. We can take the index pair $(N_2, \partial N_2)$ for $S=\{ x_1, x_2, x_3, u_1, u_2, u_3, u_4, y_1, y_2 \}$ and we have
\begin{align*}
   h(S)=[N_2/ \partial N_2 , *] \simeq \es^2 
\end{align*}
For $S'''$, we take the index pair $(N^{'''}_2, \partial N^{'''}_2)$, and we have
\begin{align*}
    h(S^{'''})=[N^{'''}_2/ \partial N^{'''}_2 , *] \simeq \es^2. 
\end{align*}
Therefore, $h(S)=h(S^{'''})$ and $S^{'''}$ is not a homotopical dynamical cancellation of $S$.
%%%%%%%%%%%%%%%%%%%%%%%%%%%%%%%%%%%%%%%%%%%%%%%%%%%%%%%%%%%%%%%%%%%%%%%%%%%%%%%%%%%%%%%%%%%%%%%%%%%%%%%%%%%%%%%%%%%%%%%%%%%%%%%%%%%
\section{Generalized Gutierrez-Sotomayor Class}
\label{sec:GS}

In this paper, we introduce the class  \textit{Generalized Gutierrez-Sotomayor} or \textit{$\mathcal{GGS}$}, for short, which generalize the class introduced by Gutierrez and Sotomayor in \cite{GS}. This class introduced by them is composed of singular surfaces, originally called \textit{two-dimensional manifolds with simple singularities}, are subsets $M\subset \mathbb{R}^{\ell}$ that are locally diffeomorphic to one of the following sets in $\mathbb{R}^3$:
\begin{align*}
    \mathcal{R}&=\{(x,y,z)\in \R^3\  |\  z=0\}, \text{\textit{ plane}};\\
    \mathcal{C}&=\{(x,y,z)\in \R^3\  |\  z^2-y^2-x^2=0\}, \text{\textit{ cone}};\\
    \mathcal{W}&=\{(x,y,z)\in \R^3\  |\  zx^2-y^2=0\}, \text{\textit{ Whitney's umbrella}};\\
    \mathcal{D}&=\{(x,y,z)\in \R^3\  |\  xy=0\}, \text{\textit{ double crossing}};\\
    \mathcal{T}&=\{(x,y,z)\in \R^3\  |\  xyz=0\}, \text{\textit{ triple crossing}}.
\end{align*}
For this class, there is a natural stratification. We say that a vector field $X$ of class $C^r$  on $\mathbb{R}^{\ell}$ is tangent to $M$ if it is tangent to each stratum. Among all such vector fields, we consider those that satisfy:
\begin{enumerate}
    \item[($i)$] $X$ has a finite number of singularities and periodic orbits, all of which are hyperbolic;
    \item[$(ii)$] The singular limit cycles of $X$ are simple, and $X$ has no saddle connections;
    \item[$(iii)$] The $\alpha$- and $\omega$-limit sets of every trajectory of $X$ is a singularity, a periodic orbit, or a singular limit cycle.
\end{enumerate}
For this class, the terminology \textit{GS manifold} and \textit{GS vector field} was introduced in \cite{Montufar} to honor the contributions of Gutierrez and Sotomayor. Accordingly, the flows induced by such vector fields are called \textit{GS flows}, and their singularities\footnote{A \textit{singularity} of a vector field $X$ is a point $p \in M$ such that $X(p)=0$.} are known as \textit{GS singularities}.

In this section, we formally define the class of $\mathcal{GGS}$ and discuss its fundamental properties. More generally, using Definition \ref{definition_canc}, we can define a  $\mathcal{GGS}$ pair as follows:

\begin{definition}
\label{def_GGS_pair}
A pair $(M', \varphi')$, where $M'$ is a singular $2$-manifold  and $\varphi'$ is a continuous flow on $M'$, is said to be a \textit{Generalized Gutierrez-Sotomayor pair}, or a \textit{$\mathcal{GGS}$ pair} for short, if it can be obtained from  a GS flow on a GS manifold $(M, \varphi)$ through a finite number of homotopical dynamical cancellations. In this case, we call $M'$ a \textit{$\mathcal{GGS}$ manifold} and $\varphi'$ a \textit{$\mathcal{GGS}$ flow}.
\end{definition}

The generality of Definition \ref{def_GGS_pair} makes it initially too complex for a direct analysis. Thus, in this work, we restrict our attention to a restricted class of $\mathcal{GGS}$ pairs. Within this subclass, we obtain topological and dynamical ``control'' through local charts and continuous flows induced by vector fields. Our main interest lies in homotopical dynamical cancellations along  flow lines in the smooth part, specifically, those involving three singularities: either two repelling and one saddle, or two attracting and one saddle. 

First, we introduce singular regions, which play a fundamental role in the subclass of $\mathcal{GGS}$ pairs that we analyze in this paper.

\begin{definition}\label{def:GSregions}
Consider $n$ disjoint unit disks $D_i\subset \R^2$  with center points $p_i$, where $i=1,\ldots, n$.  Define the following \textit{basic singular regions}:
    \begin{enumerate}
        \item[$(i)$] The $n$\textit{-sheet cone} $\mathbf{\mathcal{C}_n}$ region is the quotient space obtained by identifying the center points $p_i$ of  $n$ disks $D_i$ to a common point $p$;        
        \item[$(ii)$] The $n$\textit{-sheet cross-cap} $\mathbf{\mathcal{W}_n}$ region is the quotient space obtained by identifying two radii of $D_i$, where $i=2,\ldots, n-1$, with a radius of $D_{i-1}$ and a radius of $D_{i+1}$;
        \item[$(iii)$] The $n$\textit{-sheet double crossing} $\mathbf{\mathcal{D}_n}$ region, with $n\geq 2$, is the quotient space obtained by identifying a diameter $d_i$ of each $D_i$, where $i=2,\ldots, n$ to different diameters of $D_1$. That is,   $D_i \cap D_j = \{p\}$ and $D_i \cap D_1 = d_i$ for all $i\neq j$;
        \item[$(iv)$] The $n$\textit{-sheet triple crossing} $\mathcal{T}_n$ region, where $n=2k+1\geq 3$, is the quotient space obtained as follows: consider $n$ disks denoted by $D_0$, $D_i^{1}$ and $D_i^{2}$, where $i=1,\ldots, k$,  and fix different  diameters: $\{d^1_{0,i}, d^2_{0,i}\}$ in $D_0$, $\{d^1_i, \partial^1_i\}$ in $D_i^{1}$ and $\{d_i^2, \partial^2_i\}$ in $D_i^{2}$, where $i=1,\ldots, k$. We identify the diameters $\partial_i^1$ with $\partial_i^2$, the diameters $d_i^1$ with $d_{0,i}^1$, and the diameters $d_i^2$ with $d_{0,i}^2$ such that $(D_i^{1}\cup D_i^{2})\cap (D_j^{1}\cup D_j^{2})=\{p\}$, if $i\neq j$ and $i,j\in \{1,\cdots, k\}$.
    \end{enumerate}
\end{definition}

The singular regions from Definition \ref{def:GSregions}, were introduced in \cite{2021homotopical},  obtained by considering homotopical dynamical cancellations between smooth flow lines and GS singularities of the same type. Figure \ref{fig_regioes} exemplifies each of the basic singular regions of Definition \ref{def:GSregions}. Note that if $n=2$, the singular regions $\mathcal{C}_2$, $\mathcal{W}_2$, $\mathcal{D}_2$ are homeomorphic to $\mathcal{C}$, $\mathcal{W}$, $\mathcal{D}$, respectively. If $k=1$, the singular region $\mathcal{T}_{3}$ is homeomorphic to $\mathcal{T}$. That is, these more general singular regions recover those originally presented in \cite{GS}. Since this work aims to study all homotopical dynamical cancellations between smooth flow lines and singularities of any type, new singular regions are required, which we present in Definition \ref{def:GSmix}.

\begin{figure}[!ht]
\centering
\begin{overpic}[unit=1mm, scale=.25, width=7cm]{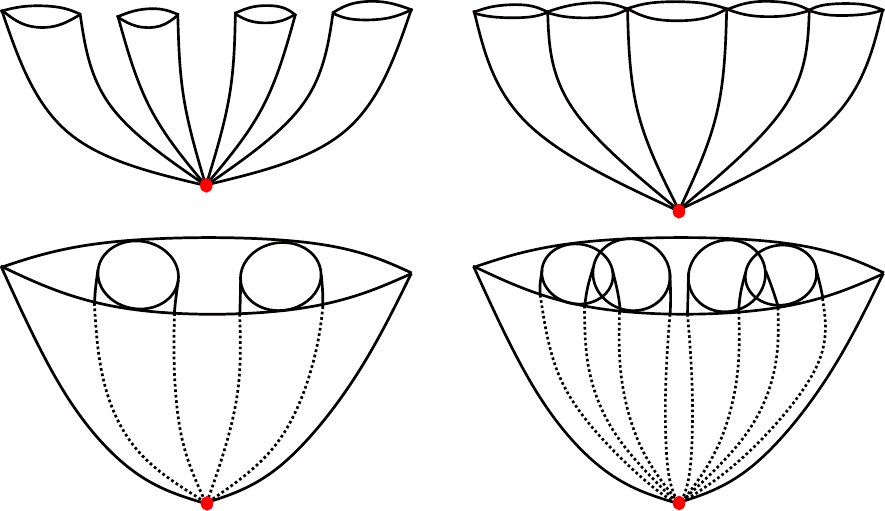}
\put(24.5,33){\small{$\mathcal{C}_4$}}
\put(24.5,-3.5){\small{$\mathcal{D}_3$}}
\put(80.5,32){\small{$\mathcal{W}_5$}}
\put(77.5,-3.5){\small{$\mathcal{T}_5$}}
\end{overpic}
%\vspace{0.2cm}
\caption{Basic Singular Regions.}
\label{fig_regioes}
\end{figure}

\begin{definition}
\label{def:GSmix}
Given $n,m, k, k' \in \mathbb{N}$ with $n,m\geq 2$ and $k, k' \geq 1$, define the following \textit{mixed singular regions}:
\begin{enumerate}
\item[$(i)$] For any basic singular regions $\mathcal{P}$ and $\mathcal{Q}$, let $\mathcal{P}\vee \mathcal{Q}$ denote the singular region formed by the wedge sum of a $\mathcal{P}$ region and a $\mathcal{Q}$ region at their distinguished singular point;  
\item[$(ii)$] The singular region $\mathcal{W}_{n}\mathcal{Q}$, where $\mathcal{Q}=\mathcal{D}_m, \mathcal{T}_{2k+1}$, is obtained  by identifying the  disc $D_1$ of a $\mathcal{D}_{m}$-region (resp., $D_0$ disc of a $\mathcal{T}_{2k+1}$-region) with the disc $D_n$ of the $n$-sheet cross-cap region;
\item[$(iii)$] The singular region ${\mathcal{D}_{n}\mathcal{T}_{2k+1}}$ is obtained by identifying the disc $D_1$ of a $\mathcal{D}_{n}$-region with the disc $D_0$ of a $\mathcal{T}_{2k+1}$-region.
\end{enumerate}
\end{definition}

To simplify notation, particularly in the proof of Proposition \ref{prop_herdeiros}, we adopt the notation $\mathcal{R}\mathcal{Q}=\mathcal{Q}$, where $\mathcal{Q}$ is any singular region and $\mathcal{R}$ is the regular region. Figure \ref{fig_mix} illustrates several examples of mixed singular regions formed through this process. The constructions described above can be applied inductively, as shown in the mixed singular regions $\mathcal{C}_2\vee\mathcal{W}_2\mathcal{D}_2$ and $\mathcal{T}_5\mathcal{W}_2\mathcal{D}_2$ (see Figure \ref{fig_mix}). In fact, such mixed singular regions naturally arise through the process of homotopical dynamical  cancellation (see Section \ref{sec:detecting_dynamical_homotopical_cancellation}). 
Throughout this paper, to simplify, we also refer to singular regions formed by a finite number of applications of the processes defined in Definition \ref{def:GSmix} as \textit{mixed singular regions}.

\begin{figure}[!ht]
\centering
\begin{overpic}[ unit=1mm, scale=.25, width=8cm]{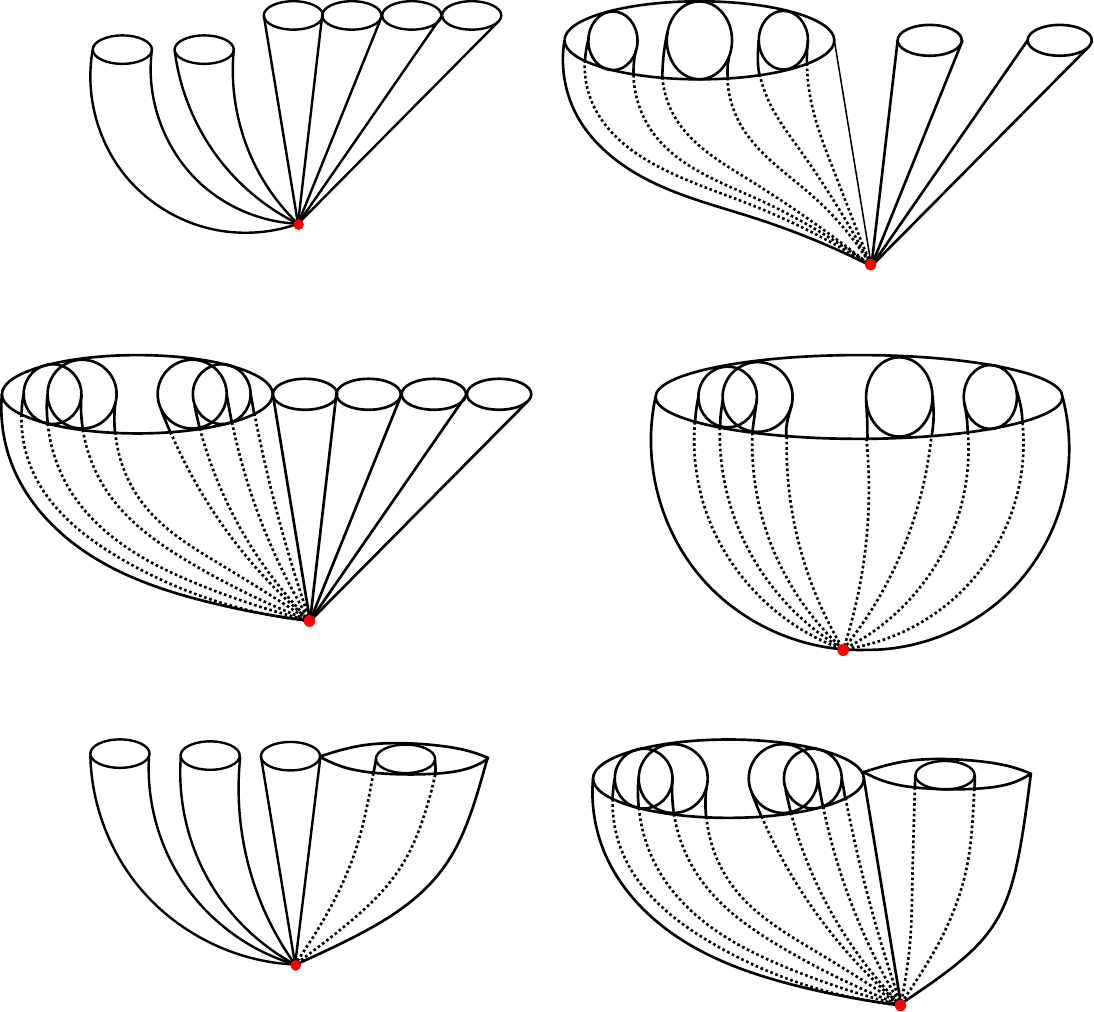}
\put(20.5,65.5
){\small{$\mathcal{C}_2\vee\mathcal{W}_4$}}
\put(24,30){\small{$\mathcal{T}_5\mathcal{W}_5$}}
\put(18,-1){\small{$\mathcal{C}_2\vee\mathcal{W}_2\mathcal{D}_2$}}
\put(73,63){\small{$\mathcal{D}_4\vee\mathcal{C}_2$}}
\put(72.5,28){\small{$\mathcal{T}_3\mathcal{D}_3$}}
\put(75,-4){\small{$\mathcal{T}_5\mathcal{W}_2\mathcal{D}_2$}}
\end{overpic}
%\vspace{0.4cm}
\caption{Mixed singular regions.}
\label{fig_mix}
\end{figure}

In the next definition, we adopt the notion of maps of class $C^r$ from \cite{milnor1965topology}. That is, a map $f:K\rightarrow \R^n$, where $K \subset \R^m$, is of \textit{class $C^r$} for $1\leq r\leq \infty$ if $f$ admits an extension $\hat{f}$ of class $C^r$ to an open neighborhood of ${K}$. In this context, a map $f:K_1\rightarrow K_2$ between subsets $K_1\subset \R^m$ and $K_2\subset \R^n$ is called a \textit{diffeomorphism of class $C^r$} if both $f$ and its inverse $f^{-1}$ are of class $C^r$.

\begin{definition}
\label{def:generalized}
A \textit{GGS manifold} is a subset $M \subset \mathbb{R}^\ell$ such that for all $p \in M$ there is a neighborhood $V_p \subseteq M$ of $p$ and a diffeomorphism of class $C^\infty$ $\psi:V_p \rightarrow \mathcal{P}$ such that $\psi(p)=0$, where $\mathcal{P}$ is either a regular region, a basic or mixed  singular region.
\end{definition}

Analogously to the case of GS manifolds, we define the subset $M(\mathcal{P})\subset M$ consisting of points in $M$ that admit local charts of $\mathcal{P}$-type, where $\mathcal{P}$ is either a regular region, a basic or mixed singular region. Furthermore, these sets provide the following \textit{decomposition} of a GGS manifold $M$ into smooth submanifolds:
\begin{align*}
    M=\bigcup_{\mathcal{P}}M(\mathcal{P}),
\end{align*}
where the union runs over $\mathcal{P}$ for regular region, a basic or mixed singular region. The \textit{singular part} of $M$ is defined as:
\begin{align*}
    \mathcal{SP}(M)\coloneqq \bigcup_{\mathcal{P}}M(\mathcal{P}),
\end{align*}
where $\mathcal{P}$ ranges over the basic and mixed singular regions. That is, $\mathcal{SP}(M)$ consists of all non-regular parts of $M$:
\begin{align*}
    \mathcal{SP}(M)=M\setminus M(\mathcal{R}).
\end{align*}
The \textit{set of folds} on $M$ denoted by $\mathcal{F}(M)$, is defined as the union of $1$-dimensional sets minus the $0$-dimensional sets in the decomposition on $M$. 

A vector field $X$ of class $C^r$ on $\mathbb{R}^\ell$ is said to be \textit{tangent} to a GGS manifold $M\subset \mathbb{R}^\ell$ if its restriction to each smooth submanifold $M(\mathcal{P})$ is itself tangent to $M(\mathcal{P})$. The space of all such vector fields of class $C^r$ tangent to $M$ is denoted by $\mathfrak{X}^r(M)$.

Let $\varphi$ be the flow associated of a vector field $X \in \mathfrak{X}^r(M)$.  A \textit{cycle} is a nonempty compact subset of $M$ consisting of a finite set of singularities of $X$:
$$\{p_1, p_2, \dots, p_n, p_{n+1}=p_1\},$$ and a set of 
trajectories of $\varphi$: $$\{\gamma_0, \gamma_1, \dots, \gamma_{n}=\gamma_0\},$$ such that for each $i \in \{1, \dots, n\}$, the following holds:
$$
\omega(\gamma_{i-1}) = \{p_i\} = \alpha(\gamma_i),
$$
where $\omega(\cdot)$ and $\alpha(\cdot)$ denote the $\omega$- and $\alpha$-limit sets, respectively, under the flow $\varphi$.

\begin{definition}
\label{def_GGS_vector_field}
    Let $M$ be a GGS manifold. A tangent vector field $X \in \mathfrak{X}^r(M)$ is called a \textit{GGS vector field} if it satisfies the following two conditions:
    \begin{enumerate}
        \item[$(i)$] The set of singularities and periodic orbits of $X$ is finite;
        \item[$(ii)$] The $\omega$- and $\alpha$-limit set of any trajectory is an singularity, a periodic orbit, or a cycle.
    \end{enumerate}
    The space of all GGS vector fields on $M$ is denoted by $\Sigma^r_{GGS}(M)$. The flow $\varphi$ associated with $X\in \Sigma^r_{GGS}(M)$ is called a \textit{GGS flow} on $M$ and the singularities of $X$, $\operatorname{Sing}(X)$, are referred to as \textit{GGS singularities}. In this case, the pair $(M,X)$ is called a \textit{GGS pair}. 
\end{definition}

The \textit{nature} of a GGS singularity is described by a word, that is, a finite sequence of letters in the alphabet:
\begin{align*}
    \{a, s, s_s, s_u, r\},
\end{align*}
where $a$ denotes attracting behavior; $s$, $s_s$ and $s_u$ refer to saddle type behaviors and  $r$ denotes repelling behavior of the flow. Whenever a letter is repeated $k$ times consecutively in a word, we abbreviate it using exponent notation, i.e.,  writing the letter raised to the power $k$. We also use the term \textit{union of natures} to refer to the concatenation of such words.

Let $\varphi$ the induced flow by vector field $X$, we define the \textit{stable set} and \textit{unstable set} of a point $x\in M$ as follows:
\begin{align*}
    W^s(x)\coloneqq\{ p\in M| \ \varphi_t(p)\to x \text{ as }t\to +\infty\};\\
    W^u(x)\coloneqq\{ p\in M| \ \varphi_t(p)\to x \text{ as }t\to -\infty\}.
\end{align*}
A trajectory $\gamma$ is a \textit{saddle connection} if there exist two singularities with saddle nature $p$ and $q$ of $X$ such that $\gamma \subseteq W^u(p)\cap W^s(q)$. 

\begin{remark}
We adopt the term \emph{generalized}, extending its definition from that used in \cite{2021homotopical}. Care must also be taken with the term ``singularity'', which may denote either a singularity of the vector field $X$ (a point of $\operatorname{Sing}(X)$) or a topological singularity of the GGS manifold $M$ (an point of $\mathcal{SP}(M)$). In all cases, the specific meaning will always be clear from the context.
\end{remark}

\begin{remark}
\label{remark_conexao_cross}
In this paper, we impose a single restriction on the class of GGS manifolds in Definition \ref{def:generalized}: we assume there are no connections through the singular part between a cross-cap and a double crossing, or between a cross-cap and a triple crossing. This assumption helps us avoid certain technical difficulties when defining the intersection number in Section \ref{sec:complex}. It is important to note that this restriction does not apply to connections between singularities of any type through the smooth part, this is precisely the central issue addressed in this work.
\end{remark}

\begin{remark}
\label{remark_saddles}
Definition \ref{def_GGS_vector_field} is more general than the vector fields defined in \cite{GS}, as it imposes no hyperbolicity condition. For the purposes of this article, however, we impose the following restrictions on the dynamics of GGS vector fields: they must have no periodic orbits, cycles, the $\alpha$- and $\omega$-limit set of every trajectory must be a singularity, and there are no saddle connections. The specific dynamics under consideration be clarified in the subsequent paragraphs.
\end{remark}

The restrictions discussed in Remarks~\ref{remark_conexao_cross} and~\ref{remark_saddles} are fundamental to most of our subsequent results. For clarity and precision, we gather these hypotheses in the following definition.

\begin{definition}
\label{def_hipoteses}
A GGS pair $(M,X)$ is said to \textit{satisfy the condition $\mathcal{H}$} if the following hold:
\begin{enumerate}
    \item[$(i)$] $M$ is a closed GGS manifold with no folds connecting a cross-cap and a double or triple crossing point;
    \item[$(ii)$] the vector field $X$ has no periodic orbits and all its singularities are hyperbolic; 
    \item[$(iii)$] there are no saddle connections. 
\end{enumerate}
\end{definition}
As illustrated by the inclusion diagram in Figure~\ref{fig:diagrama_classes}, this section introduces the $\mathcal{GGS}$ class a broad and comprehensive category of pairs $(M,X)$. Our main goal in this article is to establish several theorems for GGS pairs with satisfy the condition $\mathcal{H}$, which constitutes a significantly more general setting than the cases studied in \cite{2021homotopical}.

% --- INÍCIO DA FIGURA ---
\begin{figure}[!ht]
\centering % Centraliza a imagem na página
 % --- SEU DIAGRAMA TIKZ COM CÍRCULOS ---
 \begin{tikzpicture}[scale=0.45]
 % Define cores
 \definecolor{myblue}{RGB}{173, 216, 230} % Azul claro
 \definecolor{mygreen}{RGB}{152, 251, 152} % Verde claro
 \definecolor{mycream}{RGB}{255, 253, 208}
 \definecolor{mylilac}{RGB}{221, 191, 240}
 \definecolor{mylightgray}{RGB}{245, 245, 245}
 \definecolor{mypastelred}{RGB}{255, 153, 153}
 \definecolor{myterracotta}{RGB}{220, 140, 120}
 \definecolor{mysoftred}{RGB}{240, 128, 128}
 
 % Círculo externo (Azul)
 \filldraw [fill=myblue, draw=black, thick] (0,0) circle (4.5cm);
 \node at (0, 4) {$\mathcal{GGS}$}; % Posicionado dentro da área azul
 % Segundo círculo (Verde)
 \filldraw [fill=mygreen, draw=black, thick] (0,0) circle (3.5cm);
 \node at (0, 2.9) {GGS}; % Posicionado dentro da área verde
 % Terceiro círculo (Amarela)
 \filldraw [fill=mypastelred, draw=black, thick] (0,0) circle (2.5cm);
 \node at (0, 2) {$\mathcal{H}$}; % Posicionado dentro da área amarela
 % Quarto círculo (Laranja)
 \filldraw [fill=mylilac, draw=black, thick] (0,0) circle (1.5cm);
 % Para o texto longo não sair da elipse, usamos 'text width'
 \node [text width=2.5cm, align=center] at (0, -0.1) {GGS \\ in \cite{2021homotopical}};
 % Círculo interno (Branco)
 %\filldraw [fill=white, draw=black, thick] (0,-1.5) circle (1cm);
 %\node at (0,-1.5) {GS};
\end{tikzpicture}
% --- LEGENDA E RÓTULO ---
\caption{Inclusion diagram of the classes of pairs.}
 \label{fig:diagrama_classes}
\end{figure}
% --- FIM DA FIGURA ---

Note that GS singularities are specific instances of GGS singularities. See \cite{Montufar} for a detailed discussion of the natures of the GS case. Locally, the singularities associated with a GGS flow that satisfy the condition $\mathcal{H}$ exhibit in the following natures:

\begin{itemize}
    \item If $p \in M(\mathcal{C}_n)$ for $n>2$, we say that $p$ has \textit{$n$-sheet cone type} ($\mathcal{C}_n$-type) and \textit{super attractor} (resp., \textit{repeller}) \textit{nature}, denoted by $\mathbf{a}$ (resp., $\mathbf{r}$), when $p$ is an attracting (resp., repelling) singularity. 

    \item If $p \in M(\mathcal{W}_n)$  for $n>2$, we say that $p$ has \textit{$n$-sheet cross-cap type} ($\mathcal{W}_n$-type) and \textit{super attractor} (resp., \textit{repeller}) \textit{nature}, denoted by $\mathbf{a}$ (resp., $\mathbf{r}$), when $p$ is an attracting (resp., repelling) singularity.  

    \item If $p \in M(\mathcal{D}_n)$ for $n>2$, we say that $p$ has \textit{$n$-sheet double crossing type} ($\mathcal{D}_n$-type) and \textit{super attractor} (resp., \textit{repeller}) \textit{nature}, denote by $\mathbf{a^n}$ (resp., $\mathbf{r^n}$), when $p$ is an attracting (resp., repelling) singularity. 
    
    \item If $p \in M(\mathcal{T}_{2k+1})$ for $k > 1$, we say that $p$ has \textit{$n$-sheet triple crossing type} ($\mathcal{T}_n$-type) and \textit{super attractor} (resp., \textit{repeller}) \textit{nature}, denote by $\mathbf{a^{2k+1}}$ (resp., $\mathbf{r^{2k+1}}$), when $p$ is an attracting (resp., repelling) singularity. 

    \item If $p \in M(\mathcal{P\vee Q})$, we say that $p$ is a \textit{mixed GGS singularity} and has \textit{$\mathcal{P\vee Q}$-type} with \textit{super attractor} (resp., \textit{repeller}) \textit{nature} when $p$ is an attracting (resp., repelling) singularity. We adopt the following convention for the natures of $p$:  
    \begin{itemize}
        \item If $p \in M(\mathcal{P}\vee \mathcal{Q})$, where $\mathcal{P}\in\{\mathcal{C}_n, \mathcal{W}_n\}$ and $\mathcal{Q}\in\{\mathcal{W}_m, \mathcal{D}_m, \mathcal{T}_{2k+1}\}$, and $p$ is an attracting (resp., repelling) singularity, we say that $p$ has the same nature of an attracting (resp., repelling) singularity of $\mathcal{Q}$-type.
        \item If $p \in M(\mathcal{P}\vee \mathcal{Q})$, where $\mathcal{P}\in\{\mathcal{D}_n, \mathcal{T}_{2k+1}\}$ and $\mathcal{Q}\in\{\mathcal{D}_m,\mathcal{T}_{2k'+1}\}$, and $p$ is an attracting (resp., repelling) singularity, we say that $p$ has attracting (resp., repelling) nature and denote by the union of the natures of $\mathcal{P}$ and $\mathcal{Q}$ (with the same nature). 
    \end{itemize}
    \item If $p \in M(\mathcal{PQ})$,  we say that $p$ is a \textit{mixed GGS singularity} and has \textit{$\mathcal{PQ}$-type} with attracting  (resp., repelling) nature, when $p$ is an attracting (resp., repelling) singularity. We adopt the following convention for the number of natures of $p$:
        \begin{itemize}
        \item If $p \in M(\mathcal{W}_n\mathcal{Q})$, where $\mathcal{Q}\in\{\mathcal{D}_m, \mathcal{T}_{2k+1}\}$, and $p$ is an attracting (resp., repelling) singularity, we say that $p$ has the same nature of an attracting (resp., repelling) singularity of $\mathcal{Q}$-type.
        \item If $p \in M(\mathcal{D}_n\mathcal{T}_{2k+1})$ and $p$ is an attracting (resp., repelling) singularity, we say that $p$ has attracting (resp., repelling) nature and denote it by $\mathbf{a^{n\!+\!2k}}$ (resp., $\mathbf{r^{n\!+\!2k}}$).
        \end{itemize}
\end{itemize}

\begin{definition}
\label{def_nature_number}
Let $x \in \operatorname{Sing}(X)$ with nature $a^i s^j r^l$, define the \textit{k-th nature number} of $x$, denoted by $\eta_k(x)$, as follows:   
\begin{align}
    \eta_k (x)=\eta_k(a^is^jr^l)=
    \begin{cases}
        l, & \text{ if } k=2;\\
        j, & \text{ if } k=1;\\
        i, & \text{ if } k=0.
    \end{cases}
\end{align}
For example, if $x \in M(\mathcal{D}_2)$ and has nature $sr$, then $\eta_0(x)=0$, $\eta_1(x)=1$ and $\eta_2(x)=1$.
\end{definition}

%%%%%%%%%%%%%%%%%%%%%%%%%%%%%%%%%%%%%%%%%%%%%%%%%%%%%%%%%%%%%%%%%%%%%%%%%%%%%%%%%%%%%%%%%%%%%%%%%%%%%%%%%%%%%%%%%%%%%%%%%%%%%%%%%%%
\section{Morsification of Generalized Gutierrez-Sotomayor Flows}
%\label{sec:Morsificationn GGS}
\label{sec:morsification}

In \cite{2021homotopical}, the concept of morsification was introduced for GS pairs, but restricted to a single type of singularity. In this section, we present a morsification process applicable to any GGS pair $(M,X)$ that satisfies the condition $\mathcal{H}$. This morsification process is fundamental to the definition of a GGS chain complex, as it enables the definition of the intersection number and, consequently, a GGS boundary map, which is presented in the next section. Formally, the concept of morsification is defined as follows.

\begin{definition}
\label{def:morsificacao}
    Let $(M, X)$ be a GGS pair. A quadruple $(\widetilde{M}, \widetilde{X}, \mathfrak{h}, \mathfrak{p})$ is called a \textit{morsification} of $(M, X)$ if it satisfies the following conditions:
    \begin{enumerate}
        \item[$(i)$] $\widetilde{M}$ is a smooth $2$-manifold;
        \item[$(ii)$] $\varphi_{\widetilde{X}}$ is a smooth flow on $\widetilde{M}$ with only hyperbolic singularities and no saddle connections;
        \item[$(iii)$] $\mathfrak{h}:M\longrightarrow \widetilde{M}$ is a multivalued map such that $\mathfrak{h}$ restricted to:
        \begin{align*}
            M \setminus (\mathcal{SP}(M) \cup \{x \in M| \ \omega(x)=p\text{ or }\alpha(x)=p, \\
            \text{ where } p \text{ is a saddle cone singularity}\}),
        \end{align*}
        is a homeomorphism;
        \item[$(iv)$] $\mathfrak{p}:\widetilde{M}\longrightarrow M$ is a projection satisfying $\mathfrak{p}\circ \mathfrak{h}=id_{M}$.
    \end{enumerate}
\end{definition}

Naturally, we can define morsification locally for an isolating block\footnote{ An \textit{isolating block} for a singularity $p$ of a GS flow $\varphi_X$ is an isolating neighborhood $N\subset M$ of $p$ such that the exit set $N^{-1}\coloneqq\{x \in N | \varphi_X([0,t),x) \subsetneq N, \forall t > 0\}$ is closed.}.

The objective of Theorem \ref{teo:morsificacao} is to present basic morsifications of GGS manifolds that satisfy the condition $\mathcal{H}$, that is, to assemble a \textit{catalog} of local models that can serve as a guide for morsifying any singular manifold in this context. It is important to keep in mind that, saddle connections are not allowed, either in the regular or singular parts see Definition \ref{def_hipoteses}.

\begin{theorem}
\label{teo:morsificacao}
Let $(M, X)$ be a GGS pair satisfying the condition $\mathcal{H}$. Then there exists a morsification $(\widetilde{M}, \widetilde{X}, \mathfrak{h}, \mathfrak{p})$ of $(M, X)$.
\end{theorem}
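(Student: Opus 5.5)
The plan is to build the morsification locally and then glue. First, for each GGS singularity $p\in\operatorname{Sing}(X)$ I fix an isolating block $N_p$ small enough that the blocks are pairwise disjoint and each lies in a single chart $\psi:V_p\to\mathcal{P}$ of Definition \ref{def:generalized}. Then I fix a tubular neighbourhood of each fold in $\mathcal{F}(M)$ outside the blocks. On the complement of the singular part the flow is already a smooth flow on a smooth surface, so there $\mathfrak{h}$ and $\mathfrak{p}$ are the identity.

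The construction of $\widetilde M$ is by \emph{unfolding} the sheets. Every basic region of Definition \ref{def:GSregions} is a quotient of $n$ disks $D_i$. I replace a neighbourhood of $M(\mathcal{P})$ by the disjoint union of the disks, reglued along their boundaries so that the result is a smooth surface. Concretely:
\begin{enumerate}
\item[$(a)$] for $\mathcal{C}_n$, I separate the common centre into $n$ points;
\item[$(b)$] for $\mathcal{D}_n$ and $\mathcal{T}_{2k+1}$, I separate the sheets along the identified diameters. Along the fold this gives $n$ (resp.\ $2k+1$) smooth sheets, and the sheet passing through the fold is doubled so that each local piece is a smooth strip;
\item[$(c)$] for $\mathcal{W}_n$, I open the identified radii. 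Near the fold the surface becomes the normalization of the Whitney umbrella, which is a smooth disk on which the identification is a $2{:}1$ fold map.
\end{enumerate}
Mixed regions are handled by applying these steps inductively, following the order of identifications in Definition \ref{def:GSmix}. Remark \ref{remark_conexao_cross} guarantees that cross-cap folds never meet double or triple crossing folds, so the unfoldings along different fold types are compatible. The projection $\mathfrak{p}$ is the natural quotient map. The multivalued map $\mathfrak{h}$ sends a point to its whole fibre $\mathfrak{p}^{-1}$. Away from $\mathcal{SP}(M)$, and away from the separatrices through saddle cones, this fibre is a single point, which gives conditions $(iii)$ and $(iv)$ of Definition \ref{def:morsificacao}.

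Next comes the vector field. On each sheet the pullback $\mathfrak{p}^*X$ is a smooth vector field that is tangent to the lifted fold curves, because $X$ is tangent to the strata. For every type of singular point I then modify $\mathfrak{p}^*X$ inside the preimage of $N_p$ so that it becomes Morse–Smale there. The local catalogue is the following:
\begin{enumerate}
\item[$(a)$] an $n$-sheet cone attractor or repeller lifts to $n$ hyperbolic sinks or sources;
\item[$(b)$] a $\mathcal{D}_n$ or $\mathcal{T}_{2k+1}$ singularity of nature $a^is^jr^l$ lifts to a configuration of hyperbolic singularities on the fold lifts whose indices are counted by the nature numbers $\eta_k$ of Definition \ref{def_nature_number};
\item[$(c)$] a cross-cap singularity lifts to one singularity on the normalized disk.
\end{enumerate}
This matching of indices is exactly where the one-to-one correspondence between singularities is lost.

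Two requirements remain: the resulting field must have only hyperbolic singularities, and it must have no saddle connections. Hyperbolicity holds locally by construction. Saddle connections can only appear in two places: along lifted fold curves, where new saddles were created on the same invariant curve, and through the preimages of the separatrices of saddle cones. On the smooth part of $M$ there are no saddle connections by condition $\mathcal{H}$. In $\widetilde M$ I remove the new connections by a small Kupka–Smale perturbation supported near the lifted folds and cones, chosen so that it preserves the local models.

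I expect the main obstacle to be this global step along the folds. A fold joins two singular points, possibly of different types, so the local lifts at the two endpoints must be chosen compatibly. Each fold lift must flow from a lifted source or saddle at one end to the appropriate lifted piece at the other end, while the number of lifted sheets agrees at both endpoints. To handle this I would first fix, for each fold, the number of sheets it carries, and only then pick the local models at its endpoints. This is a case analysis over the fold types permitted by condition $\mathcal{H}$.
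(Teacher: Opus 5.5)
\textbf{Main gap: saddle cones.} Your catalogue leaves out cone singularities of saddle nature. Item $(a)$ covers only attracting and repelling cones, and your claim that ``hyperbolicity holds locally by construction'' fails for the saddle case.

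Separating the apex of a $2$-sheet saddle cone $y$ leaves on each sheet a singular point with one stable separatrix, one unstable separatrix and two hyperbolic sectors. That is an index-$0$ point, so it is not hyperbolic. Each sheet's block has total index $0$, so no modification supported there can turn it into a hyperbolic saddle. You can only erase it (a flow box) or trade it for a saddle plus a spurious sink or source. Either way, $y$ loses its saddle counterpart in $\widetilde M$.

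The missing idea is the opposite surgery, which the paper takes from \cite{2021homotopical}. Instead of cutting the two sheets apart, join them through the apex by a neck (compare $z^2-x^2-y^2=-\epsilon$). The neck carries two regular hyperbolic saddles $\widetilde y,\widetilde y'$ with $\mathfrak h(y)=\{\widetilde y,\widetilde y'\}$. Each orbit of $M$ tending to or leaving $y$ then corresponds to two orbits of $\widetilde M$, one for each of $\widetilde y,\widetilde y'$.

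This doubling is exactly why Definition \ref{def:morsificacao}$(iii)$ exempts the stable and unstable sets of saddle cones. It is also what Definition \ref{def:intersection}$(iii)$ and Lemma \ref{lemma_cone} rely on. In your construction the fibre of $\mathfrak p$ over such an orbit is a single point, so the exemption you mention is not produced by anything you built. The paper also uses the connected model for attracting and repelling cones: one sink or source plus saddles doubly connected to it, rather than your $n$ sinks or sources. That difference is harmless for existence, but it changes the generator bookkeeping used in Theorem \ref{teo:complexo}.

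\textbf{Smaller points.}

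\emph{The Kupka--Smale perturbation.} A generic perturbation along the lifted folds would destroy the exact correspondence $u\mapsto\{\widetilde u_1,\widetilde u_2\}$ between a fold orbit and its two lifts, which is what the morsification is meant to carry. The paper perturbs nothing. It replaces each fold block by an explicit model and glues the models along matching exit and entrance boundaries. The cases are $r\to a$, $r\to s_s$, $s_u\to a$ for cross-caps, and $r\to a$, $r\to sa$, $r\to ss_s$ for double and triple crossings.

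\emph{The cross-cap models are not normalizations.} When $\partial N_2^+\cong\es^1\vee\es^1$, a repelling cross-cap is split into two repellers $\widetilde x,\widetilde x'$. So a $\mathcal W_n$ repeller can correspond to up to $2(n-1)$ critical points, unlike your item $(c)$.

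\emph{The obstacle you flag.} The compatibility problem you single out as the main difficulty belongs to the paper's block-gluing approach. A global normalization is canonical and does not have it.

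\emph{Remark \ref{remark_conexao_cross}.} It only forbids fold orbits joining a cross-cap to a double or triple crossing. Cross-cap folds and crossing folds do meet at points of type $\mathcal W_n\mathcal D_m$ and $\mathcal W_n\mathcal T_{2k+1}$, and the paper treats these separately by cutting along the identified radius.
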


\begin{proof}
The idea of proof is to morsify blocks containing folds and isolated singularities, and then glue these morsifications together in a suitable way to obtain a global morsification.

The first observation is that the morsification for cone-type singularities presented in \cite{2021homotopical} remains valid for our purposes. These are isolated singularities, and the construction in \cite{2021homotopical} is performed within an isolating block that preserves its boundary and ensures the vector field is transverse to that boundary.

Throughout the proof, we use the notations $r\rightarrow a$, $s \rightarrow a$ and $r \rightarrow s$, where each arrow denotes a connection between singularities of the same type through folds, respecting the associated natures.

\begin{enumerate}
\item First, we consider the possible cases of cross-cap to cross-cap connections via folds. Since saddle connections through flow lines in the singular part are not allowed, the valid cases are as follows:

\begin{enumerate}
\item Let $N$ be an isolating block for a connection between a repelling $n$-sheet cross-cap $x$ and an attracting $n$-sheet cross-cap $y$. Let $\widetilde{N}$ be homeomorphic to $\es^2$ minus two points, equipped with a vector field having a repelling singularity $\widetilde{x}$ and an attracting singularity $\widetilde{y}$. See Figure \ref{fig:Morsificacao_cross_cap}.

We denote by $u_i$ the flow lines in the singular part of $N$, where $i\in \{1,\cdots, n-1 \}$. Additionally, we identify $2(n-1)$ flow lines in $\widetilde{N}$ whose $\alpha$-limit is $\widetilde{x}$ and $\omega$-limit is $\widetilde{y}$; let $\widetilde{u}_ {1,i}, \widetilde{u}_{2,i} \in \mathcal{M}_{\widetilde{x}\widetilde{y}}$, where $i\in \{1, \cdots , n-1\}$, denote these flow lines.

Let us consider $f:N\longrightarrow \R$ to be the Lyapunov function constructed in \cite{LimaTenorio_1}, such that its image is $[0,1]\subset \R$ with $f^{-1}(1)=\{x\}$ and $f^{-1}(0)=\{ y \}$. Similarly, let $\widehat{f}:\widetilde{N}\longrightarrow [0,1]\subset \R$ be a function  with $\widehat{f}^{-1}(1)=\{\widetilde{x} \}$ and $\widehat{f}^{-1}(0)=\{\widetilde{y} \}$. Thus, if $x_i \in u_i$ there exists a unique $\delta \in (0,1)$ such that $\{ x_i \}=f^{-1}(\delta)\cap u_i$. For this same $\delta$ we have unique points determined by:
\begin{align*}
    \{x_{1,i}\}=\widehat{f}^{-1}(\delta)\cap \widetilde{u}_{1,i}\ \ \ \ \text{and}\ \ \ \ \{x_{2,i}\}=\widehat{f}^{-1}(\delta)\cap \widetilde{u}_{2,i}.
\end{align*}
In this way, we associate each flow line $u_i$ in the singular part of $N$ with $\widetilde{u}_{1,i}$ and $\widetilde{u}_{2,i}$, defining $\mathfrak{h}(x_i)\coloneqq\{\widetilde{x}_{1,i}$, $\widetilde{x}_{2,i}\}$. For $x\in N\setminus \mathcal{SP}(M)$, define $\mathfrak{h}$ induced by the identity. Additionally, $\mathfrak{h}(x)=\widetilde{x}$ and $\mathfrak{h}(y)=\widetilde{y}$. Naturally, $\mathfrak{p}(\widetilde{x}_{j,i})\coloneqq x_i$ for all $i, j$ and $\mathfrak{p}|_{\widetilde{N} \setminus F}$ is induced by the identity, where $F=\cup_{i,j}\widetilde{u}_{j,i}$.

\item The case of a connecting $r\rightarrow s_s$, that is, repelling cross-cap  with a stable saddle cross-cap (resp. $s_u\rightarrow a$ unstable saddle and attracting), where the saddle block has the shape of a pair of pants, follows an argument analogous to the repelling-attractor connection, as illustrated in Figure \ref{fig:Morsificacao_cross_cap}.

\begin{figure}[!ht]
    \centering
    \begin{overpic}[unit=0.5mm, 
    scale=.25,  width=10cm]{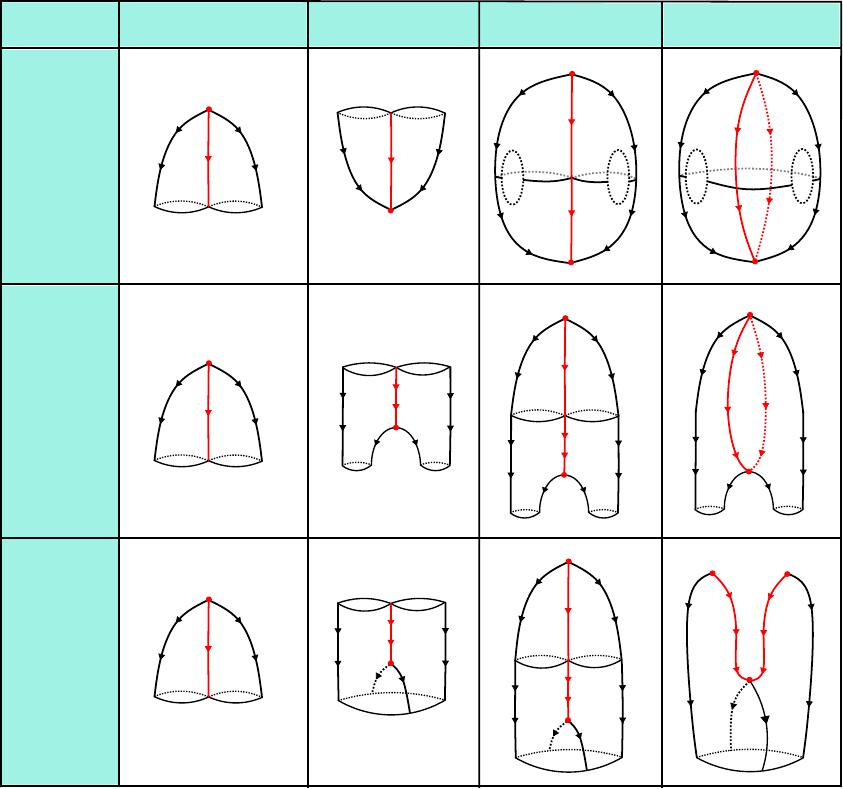}
    %legendas
    \put(3,70){$r\rightarrow a$}
    \put(2,40){$r\rightarrow s_s$}
    \put(2,10){$r\rightarrow s_s$}
    \put(22,89){$N_1$}
    \put(45,89){$N_2$}
    \put(67,89){$N$}
    \put(88,89){$\widetilde{N}$}
    %pontos_linha_1
    \put(22,81){\small{$x$}}
    \put(45.5,66){\small{$y$}}
    \put(66,85.2){\small{$x$}}
    \put(68.5,61){\small{$y$}}
    \put(87.5,85.2){\small{$\widetilde{x}$}}
    \put(91.4,60.7){\small{$\widetilde{y}$}}
    %pontos_linha_2
    \put(22,50.8){\small{$x$}}
    \put(46,40.5){\small{$y$}}
    \put(65,56){\small{$x$}}
    \put(66,35){\small{$y$}}
    \put(87,56.5){\small{$\widetilde{x}$}}
    \put(88,34){\small{$\widetilde{y}$}}
    %pontos_linha_3
    \put(22,22.5){\small{$x$}}
    \put(48,14.2){\small{$y$}}
    \put(66,27.5){\small{$x$}}
    \put(68.5,8){\small{$y$}}
    \put(82.5, 26){\small{$\widetilde{x}$}}
    \put(91.5, 26){\small{$\widetilde{x}'$}}
    \put(87.8,9){\small{$\widetilde{y}$}}
    %fluxos_linha_1
    \put(65.5,74){\small{$u$}}
    \put(84.5,74.5){\tiny{$\widetilde{u}_{1}$}}
    \put(88.5,74.5){\tiny{$\widetilde{u}_{2}$}}
    %fluxos_linha_2
    \put(83.2,45){\tiny{$\widetilde{u}_{1}$}}
    \put(87.5,45){\tiny{$\widetilde{u}_{2}$}}
    \put(64.5,45.3){\small{$u$}}
    %fluxos_linha_3
    \put(65,17){\small{$u$}}
    \put(84,17){\tiny{$\widetilde{u}_{1}$}}
    \put(91.2,17){\tiny{$\widetilde{u}_{2}$}}
    \end{overpic}
    \caption{Morsifications of connections through the fold between cross-cap.}
    \label{fig:Morsificacao_cross_cap}
\end{figure}

\item Finally, consider the case of a repelling cross-cap $x$ and a stable saddle cross-cap $y$ (resp. an unstable saddle and an attractor), with the isolating block $N_2$ for $y$ as shown in Figure \ref{fig:Morsificacao_cross_cap}, where $\partial N_2^+\cong\es^1 \vee \es^1$ and $\partial N_2 ^- \cong \es^1$. Let $\widetilde{N}$ be homeomorphic to $\es^2$ minus an open disk, equipped with a Morse-Smale flow transversal to the exit boundary, having two repelling singularities $\widetilde{x}$, $\widetilde{x}'$ and one saddle singularity $\widetilde{y}$. Denote by  $\widetilde{u}_{1}, \widetilde {u}_{2} \in \mathcal{M}_{\widetilde{x}\widetilde{y}}$ the flow lines whose $\alpha$-limit is $\widetilde{x}$ and $\omega$-limit is $\widetilde{y}$. We associate the flow line $u$ in the singular part of $N$ with $\widetilde{u}_{1}$ and $\widetilde{u}_{2}$, that is, given $x \in u$ we define $\mathfrak{h}(x)=\widetilde{x}_{1}, \widetilde{x}_{2}$, where $\widetilde{x}_{1}\in \widetilde{ u}_{1}$ and $\widetilde{x}_{2}\in \widetilde{u}_{2}$, in a unique way. For $x\in N\setminus \mathcal{SP}(M)$ define $\mathfrak{h}$ induced by the identity. Additionally, $\mathfrak{h}(x)=\{\widetilde{x}, \widetilde{x}'\}$ and $\mathfrak{h}(y)=\widetilde{y}$. Naturally, $\mathfrak{p}(\widetilde{x}_{i})\coloneqq x$ for all $i$ and $\mathfrak{p}|_{\widetilde{N} \setminus u}$ it is induced by the identity.
\end{enumerate}

\item Now, we consider the possible cases of double crossing to double crossing connections through the folds. Recall that saddle connections through flow lines in the singular part are not allowed. The relevant connections are $ r \rightarrow a$, $r\rightarrow sa$ (resp. $sr\rightarrow a$) and $r\rightarrow ss_s$ (resp. $a \rightarrow ss_u$) as illustrated in Figure \ref{fig:Morsificacao_DD}. In these cases, we can apply the morsifications presented in \cite{2021homotopical} for the isolating blocks of each singularity. Then, by gluing the exit boundary $\partial N_1^{-}$ to the entry boundary $\partial N_2^{+}$, noting that in these cases $\partial N_1^{-} \cong \partial N_2^{+}$, the desired result follows.

\item In a similar manner to the double crossing to double crossing connections, we can perform the same analysis for connections between triple crossings through the folds.

\begin{figure}[!ht]
    \centering
    \begin{overpic}[unit=1mm, scale=.25, width=10cm]{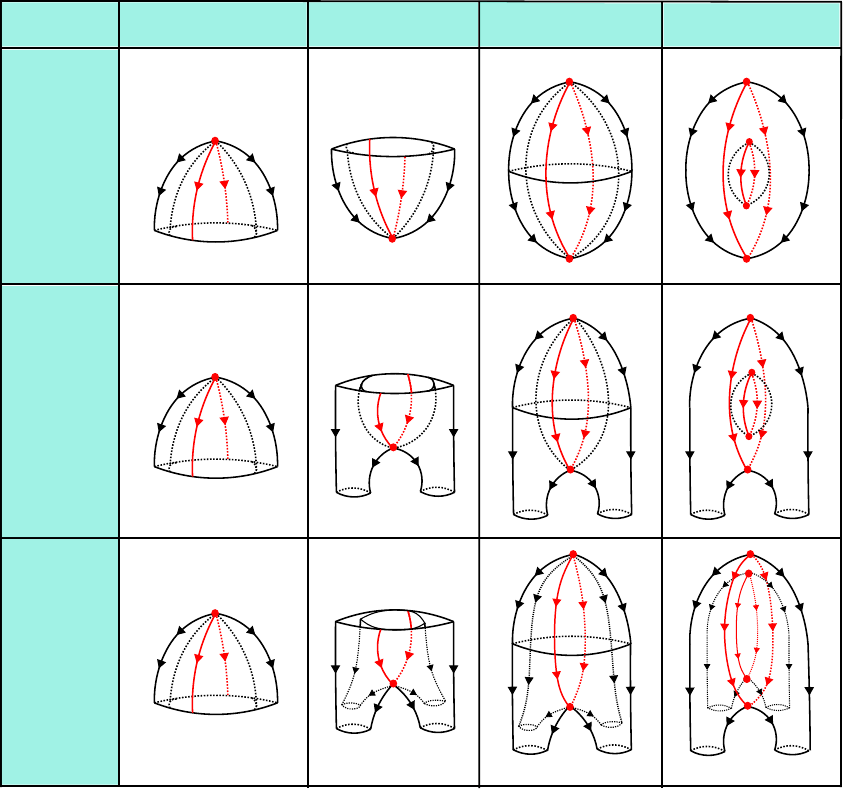}
    \put(3,70){$r\rightarrow a$}
    \put(2,40){$r\rightarrow sa$}
    \put(1,10){$r\rightarrow ss_s$}
    \put(22,89){$N_1$}
    \put(44,89){$N_2$}
    \put(67,89){$N$}
    \put(88,89){$\widetilde{N}$}
    \end{overpic}
   \caption{Morsifications of connections through the fold between double crossing.}
    \label{fig:Morsificacao_DD}
\end{figure}

\item Finally, we present the morsifications for the mixed singular regions. The morsification of these regions proceeds naturally, and we illustrate the following cases:

\begin{enumerate}
    \item Let $p$ be a singularity of type $\mathcal{P}\vee \mathcal{Q}$. We present a local morsification of $p$. To do so, let $N$ be an isolating neighborhood of $p$. By Definition \ref{def:GSmix}, we have $N=N_1\vee N_2$, where $N_1$ and $N_2$ are isolating neighborhoods of singularities of the type $\mathcal{P}$ and $\mathcal{Q}$, respectively. We define the multivalued map
    \begin{align*}
        h:N \longrightarrow N_1\sqcup N_2,
    \end{align*}
    such that $h|_{(N_1\vee N_2)\setminus \{p\}}$ is induced by the identity and $h(p)=\{p_1, p_2\}$, where $p_1$ is the singularity of type $\mathcal{P}$ in $N_1$ and $p_2$ is the singularity of type $\mathcal{Q}$ in $N_2$. Since we have already discussed the morsification of the basic singular types: $\mathcal{P}$ and $\mathcal{Q}$, there exist morsifications $\mathfrak{h}_1$ and $\mathfrak{h}_2$ of $N_1$ and $N_2 $, respectively. Thus, we define $\mathfrak{h}:N\longrightarrow \widetilde{N}\coloneqq\widetilde{N}_1\sqcup \widetilde{N}_2$ as follows:
    \begin{align*}
        \mathfrak{h}(x)\coloneqq
        \begin{cases}
            \mathfrak{h}_1 \circ h (x), \text{ if } x \in (N_1\vee N_2)\setminus N_2,\\
            \mathfrak{h}_2 \circ h (x), \text{ if } x \in (N_1\vee N_2)\setminus N_1,\\
            \mathfrak{h}_1 (p_1)\sqcup \mathfrak{h}_2 (p_2), \text{ if } x=p.
        \end{cases}
    \end{align*}
    We also have the respective projections $\mathfrak{p}_1:\widetilde{N}_1\longrightarrow N_1$ and $\mathfrak{p}_2:\widetilde{N}_2\longrightarrow N_2$, such that $\mathfrak{p }_1\circ \mathfrak{h}_1=id|_{N_1}$ and $\mathfrak{p}_2\circ \mathfrak{h}_2=id|_{N_2 }$. Define the map
    \begin{align*}
         p:N_1\sqcup N_2\longrightarrow N=N_1\vee N_2,
    \end{align*}
    where $p(p_1)=p(p_2)=p$ and $p|_{(N_1\sqcup N_2)\setminus \{p_1,p_2\}}=id$. Finally, define $\mathfrak{p}:\widetilde{N}_1\sqcup \widetilde{N}_1\longrightarrow N_1\vee N_2$ as follows:
    \begin{align*}
    \mathfrak{p}(x)\coloneqq
    \begin{cases}
        p\circ \mathfrak{p}_1(x), \text{ if } x\in \widetilde{N}_1;\\
        p\circ \mathfrak{p}_2(x), \text{ if } x\in \widetilde{N}_2.
    \end{cases}
    \end{align*}
    We claim that $\mathfrak{p}\circ \mathfrak{h}=id|_{N_1\vee N_2}$. Indeed, $\mathfrak{p}\circ \mathfrak{h}(p)=p$ and if $x \in N_i\setminus \{p\}$, where $i=1,2$, we have:
    \begin{align*}
        \mathfrak{p}\circ \mathfrak{h}(x)&= \mathfrak{p}\left( \mathfrak{h}(x) \right)\\
        &=\mathfrak{p}\left( \mathfrak{h}_i\circ h (x) \right)\\
        &=(p\circ \mathfrak{p}_i) \circ(\mathfrak{h}_i\circ h) (x) \\
        &=p\circ (\mathfrak{p}_i \circ \mathfrak{h}_i)\circ h (x) \\
        &=p\circ id_{N_i}\circ h (x) \\
        &=id_{N_1\vee N_2} (x).
    \end{align*}
    
    \item Let $p$ be a singularity of type $\mathcal{W}_n\mathcal{Q}$, with $\mathcal{Q}=\{\mathcal{D}_m, \mathcal{T}_{2k+1}\}$. It is sufficient to prove the morsification locally, so let $N$ be an isolating neighborhood of $p$. Note that $N$ can be seen as the space obtained by gluing $N_1$ and $N_2$ which are isolating neighborhoods of singularities $p_1$ and $p_2$ of types $\mathcal{Q}$ and $\mathcal {W}_{n-1}$\footnote{Note that if $n=2$, we have that $\mathcal{W}_{n-1}$ is already a smooth region}, respectively. This gluing is obtained by identifying a radius $r_1$ of $N_1$ with a radius $r_2$ of $N_2$ via the map $f$, that is, $N=N_1\cup_{f} N_2$. Consider the multivalued map
    \begin{align*}
        h:N_1\cup_{f} N_2 \longrightarrow N_1\sqcup N_2,
    \end{align*}
    such that $h|_{N\setminus r}$ is induced by the identity and $h(x)=\{x_1, x_2\}$ for all $x\in r$ such that $x_1\in r_1$, $ x_2\in r_2$. From what we proved previously, we know that there are $\mathfrak{h}_i: N_i\longrightarrow \widetilde{N}_i$ and $\mathfrak{p}_i: \widetilde{N}_i\longrightarrow N_i$ for $i=1 ,2$ such that $\mathfrak{p}_i\circ \mathfrak{h}_i=id_{N_i}$. Thus, we define $\mathfrak{h}:N_1\cup_{f}N_2 \longrightarrow \widetilde{N}_1\sqcup \widetilde{N}_2$ as follows:
    \begin{align*}
        \mathfrak{h}(x)\coloneqq
        \begin{cases}
            \mathfrak{h}_1 \circ h (x), \text{ if } x \in N_1,\\
            \mathfrak{h}_2 \circ h (x), \text{ if } x \in N_2.
        \end{cases}
    \end{align*}
    Consider the projection $p: N_1 \sqcup N_2\longrightarrow N_1\cup_{f} N_2$ and define $\mathfrak{p}: \widetilde{N_1}\sqcup \widetilde{N_2}\longrightarrow N_1\cup_{f} N_2 $ as follows:
    \begin{align*}
        \mathfrak{p}(x)\coloneqq
        \begin{cases}
            p\circ \mathfrak{p}_1(x), \text{ if } x \in \widetilde{N}_1,\\
            p\circ \mathfrak{p}_2(x), \text{ if } x \in \widetilde{N}_2.
        \end{cases}
    \end{align*}
    We affirm that $\mathfrak{p}\circ \mathfrak{h}=id|_{N_1 \cup_f N_2}$. Indeed, if $x \in N_i$, where $i=1,2$, we have:
    \begin{align*}
        \mathfrak{p}\circ \mathfrak{h}(x)&=\mathfrak{p}\left( \mathfrak{h}_i\circ h (x) \right)\\
        &=(p\circ \mathfrak{p}_i) \circ(\mathfrak{h}_i\circ h) (x) \\
        &=p\circ (\mathfrak{p}_i \circ \mathfrak{h}_i)\circ h (x) \\
        &=p\circ id_{N_i}\circ h (x) \\
        &=id|_{N_1 \cup_f N_2}.
    \end{align*}
    
    \item Let $p \in \mathcal{D}_n\mathcal{T}_{2k+1}$ and let $N$ be an isolating neighborhood of $p$. Note that by Definition~\ref{def:GSmix}, we can view $N$ as the quotient space of the following disjoint union:
    \begin{align*}
        N_0\coloneqq \bigsqcup_{i=1}^{n} \mathbb{D}^2 \bigsqcup_{j=1}^{k} N_k,
    \end{align*}
    where $N_j$ is the neighborhood of a double crossing with $2$-sheets. Define, similarly to the previous cases, the multivalued application $h:N \longrightarrow N_0 $. Furthermore, we know that there are $\mathfrak{h}_j:N_j\longrightarrow \widetilde{N}_j$ and $\mathfrak{p}_j:\widetilde{N}_j\longrightarrow N_j$. Finally, define $\mathfrak{h}:N \longrightarrow \widetilde{N}\coloneqq\sqcup_{i=1}^n \mathbb{D}^2 \sqcup_{j=1}^k \widetilde{N} _j$, as follows:

    \begin{align*}
        \mathfrak{h}(x)=
        \begin{cases}
            h(x), \text{ if } x \in \sqcup_{i=1}^n\mathbb{D}^2;\\
            \mathfrak{h}_j\circ h (x), \text{ if } x \in N_j. 
        \end{cases}
    \end{align*}
    Consider the projection onto the quotient space $p: N_0 \longrightarrow N$ and define $\mathfrak{p}: \widetilde{N} \longrightarrow N$ as follows:
    \begin{align*}
    \mathfrak{p}(x)=
    \begin{cases}
        p(x), \text{ if } x \in \sqcup_{i=1}^n\mathbb{D}^2;\\
        p \circ \mathfrak{p}_j (x), \text{ if } x \in \widetilde{N}_j. 
    \end{cases}
    \end{align*}
    Note that if $x\in \sqcup_{i=1}^n\mathbb{D}^2$ then $\mathfrak{p}\circ \mathfrak{h}(x)=p \circ h (x) =id_N(x)$. On the other hand, if $ x \in N_j$ then
    \begin{align*}
        \mathfrak{p}\circ \mathfrak{h}(x)&= (p\circ\mathfrak{p}_j)\circ (\mathfrak{h}_j\circ h)(x) \\ 
        &= p\circ (\mathfrak{p}_j \circ \mathfrak{h}_j) \circ h(x) \\ 
        &= p\circ id_{N_j} \circ h(x) \\ 
        &= id_{N}(x).
        \end{align*}
\end{enumerate}
\end{enumerate}
Since all other cases of mixed singular regions are obtained by a finite number of steps applying the operations presented in Definition \ref{def:GSmix}, the morsifications of these singularities can likewise  be obtained through a finite number of  processes described in item $4.$ of this proof. Therefore, in all cases, we have a morsification that can be conveniently glued together to obtain a global morsification and the result follows.
\end{proof}

\begin{remark}
It is important to note that the morsification of a singular region is not unique. For example, consider the morsification for the cone singularity given in \cite{2021homotopical}, and compare it with a completely different morsification for the same singularity presented in \cite{LimaTenorio_2}. Throughout this article, we consistently use the morsification given by Theorem~\ref{teo:morsificacao}.
\end{remark}

\section{Generalized Gutierrez-Sotomayor Chain Complex}
\label{sec:complex}

The aim of this section is to define a chain complex associated with a GGS pair $(M,X)$, satisfying the condition $\mathcal{H}$, which encodes information on the trajectories of the flow. To this end, we use the morsification process described in Section \ref{sec:morsification}. We also provide a characterization of the matrix of the boundary operator defining this chain complex.

\subsection{Well-Definedness of the GGS Chain Complex}

Given $(M, X)$ a GGS pair satisfying the condition $\mathcal{H}$,  denote by $\varphi_X$ the associated flow. Inspired by the Morse case, given  $x$, $y \in \operatorname{Sing}(X)$, define the \textit{connecting manifold} between $x$ and $y$ by
\begin{align*}
\mathcal{M}_{xy}\coloneqq W^u(x)\cap W^s(y),    
\end{align*}
where $W^u(x)$ is the unstable set of $x$ and $W^s(y)$ is the stable set of $y$. Taking the quotient of the connecting manifold $\mathcal{M}_{xy}$ by the natural action of $\R$ on the flow lines, we obtain the \textit{moduli space} between $x$ and $y$, denoted by $\widehat{\mathcal{M}}_{xy}$.

Two singularities $x, y \in \text{Sing}(X)$ are said to be \textit{consecutive} if they satisfy the conditions $\eta_k(x) \neq 0$ and $\eta_{k-1}(y) \neq 0$ for some $k \in \{1, 2\}$, where $\eta_j$ is the $j$-th nature number of the singularity, as defined in Definition~\ref{def_nature_number}.

Unlike in the Morse complex, a single GGS singularity may contribute to more than one generator of the GGS chain complex. Following the notation in \cite{2021homotopical}, we recall the convention for the generators associated with a singularity $x$:
\begin{align}
\label{eq_geradores}
    \{h^i_k(x)|\ i=1, \ldots, \eta_k(x)\ \text{ and } \ k=0,1,2\}.
\end{align}
For instance, if $x \in M(\mathcal{T})$ is a singularity with $ssa$ nature, then the corresponding set of generators is given by:
\begin{align*}
    \{h^i_k(x)|\ i=1, \ldots, \eta_k(x)|\ \ k=0,1,2\}=\{h^1_0(x), h^1_1(x), h^2_1(x)\}.
\end{align*}
Similarly, if $y \in M(\mathcal{W})$  has nature $r$, then the corresponding set of generators is $ \{h^1_2(y)\}$. In Figure \ref{fig_intr_02}, one has  mixed singularities, namely $x_3$ and $x'_4$. Note that $x_3$ has type $\mathcal{W}_2\mathcal{D}_2$ and  repelling nature, while $x'_4$ has type $\mathcal{C}_2\vee\mathcal{W}_2\mathcal{D}_2$ and  attracting nature. According to the nature conventions in Section \ref{sec:GS}, the corresponding set of generators  are $\{h^1_2(x_3), h^2_2(x_3)\}$ and $\{h^1_0(x'_4)$, $h^2_0(x'_4)\}$, respectively.

For simplicity, we sometimes refer to $h^i_k(x)$ and $h^j_{k-1}(y)$ as consecutive singularities.

Note that in the cases where $x$ is a double or triple singularity, the number of singularities associated to $x$ by the morsification $\mathfrak{h}$ is exactly equal to the  number of generators in the set given by (\ref{eq_geradores}). Hence, we also denote these singularities in the  morsified manifold $\widetilde{M}$ by $h^i_k(x)$.
However, in the case where $x$ is a  cross-cap or cone singularity, this bijection does not always hold. In such situations, the notation $h^1_k(x)$ also refers to the set of singularities associated to $x$ through morsification, see for example Figure \ref{fig_teo_w_4}.

With this in mind, we introduce the notations $\mathcal{M}_{h^i_k(x) h^j_{k-1}(y)}$ and $\widehat{\mathcal{M}}_{h^i_k(x) h^j_{k-1}(y)}$, which represent the connecting manifolds and the moduli space in $\widetilde{M}$, respectively. When there is a bijection between the numbers of associated singularities and the number of generators, this notation coincides with the standard usage. However, when there are multiple singularities associated with $x$ that corresponds to a single generator $h^1_k(x)$, the connecting manifold and moduli space are defined as the union of the connecting manifolds and moduli spaces in $\widetilde{M}$.

For example, suppose $x\in \operatorname{Sing}(X)$ and $\widetilde{x}, \widetilde{x}' \in \widetilde{M}$ are two singularities associated to a single generator $h^1_k(x)$. Then we have:
\begin{align*}
    \mathcal{M}_{h^1_k(x) h^j_{k-1}(y)}=\mathcal{M}_{\widetilde{x} h^j_{k-1}(y)}\cup \mathcal{M}_{\widetilde{x}' h^j_{k-1}(y)};\\
    \widehat{\mathcal{M}}_{h^1_k(x) h^j_{k-1}(y)}=\widehat{\mathcal{M}}_{\widetilde{x} h^j_{k-1}(y)}\cup \widehat{\mathcal{M}}_{\widetilde{x}' h^j_{k-1}(y)}.
\end{align*}
We fix an orientation on each unstable manifold $W^u(\widetilde{x})$ for every regular singularity $\widetilde{x}$ of the morsified flow $\varphi_{\widetilde{X}}$. Having established this notation and this convention, we now introduce the next definition, which is one of the most important in this paper.

\begin{definition}[GGS chain complex]
\label{def:GS_chain}
Let $(M, X)$ be a GGS pair satisfying the condition $\mathcal{H}$. The \textit{generalized Gutierrez-Sotomayor chain group} or simply the \textit{GGS chain group} is denoted by $C_*(M, X; \Z)$ and is defined by:
\begin{align*}
    C_k(M, X; \Z)\coloneqq\bigoplus_{x \in \operatorname{Sing}(X)}\left( \bigoplus_{i=1}^{\eta_k(x)} \Z\langle h^i_k (x) \rangle \right), \ \ \text{for all} \ k \in \Z. 
\end{align*}
The \textit{k-th GGS boundary map}   $\triangle_k: C_k(M,X; \Z) \rightarrow C_{k-1}(M, X; \Z),$  is defined on the generators of the GGS chain group by
\begin{align*}
    \triangle_k  \langle h^i_k(x) \rangle \coloneqq \sum_{y \in \operatorname{Sing}(X)}\left( \sum_{j=1}^{\eta_{k-1}(y)} n(h^i_k(x), h^j_{k-1}(y)) \cdot \langle h^j_{k-1}(y) \rangle \right).
\end{align*}
and extended by linearity. 
\end{definition}
In order for the GGS boundary operator to be well-defined, we must specify $n(h^i_k(x),h^j_{k-1}(y))$, the intersection number between the generators $h^i_k(x)$ and $h^j_{k-1}(y)$. This number depends on the chosen morsification process.

\begin{definition}
\label{def:intersection}
   Let $x,y \in \operatorname{Sing}(X)$ be consecutive singularities. The \textit{GGS characteristic sign} $n_u$ of a flow line $u\in \widehat{\mathcal{M}}_{xy}$ is defined as follows:
    \begin{enumerate}
        \item[($i)$] If $u$ lies in the regular part of $M$ and neither $x$ nor $y$ are  cone type singularities with saddle nature, define $n_u\coloneqq n_{\widetilde{u}}$, since there is only one flow line associated with $u$ in the morsification.
        
        \item[$(ii)$] If $u$ lies in the singular part of $M$, define $n_u\coloneqq(n_{\widetilde{u}^1}, n_{\widetilde{u}^2})$. That is, $n_u$ is an ordered pair whose $j$-th element is the characteristic sign of the $j$-th flow line $\widetilde{u}^j$ associated with $u$ in morsification, where $j=1,2$.

        \item[$(iii)$] Suppose $x$ is a singularity of any type with repelling nature and if $y$ is a cone singularity with saddle nature. Let $\widetilde{y}$ and $\widetilde{y}'$ be the singularities associated with $y$ in the morsification process, i.e., $\mathfrak{h}(y)=\{\widetilde{y}, \widetilde{y}'\}$ and let $h^i_{2}(x)$ be the singularity in the morsified manifold associated to $x$ which connects to both $\widetilde{y}$ and $\widetilde{y}'$. Note that $\widehat{\mathcal{M}}_{xy}\cong\widehat{\mathcal{M}}_{h^i_{2}(x)\widetilde{y}}\cong\widehat{\mathcal{M}}_{h^i_{2}(x)\widetilde{y}'}$ So, given $u \in \widehat{\mathcal{M}}_{xy}$ we have the corresponding ones flow lines in the morsification: $\widetilde{u}\in \widehat{\mathcal{M}}_{h^i_{2}(x)\widetilde{y}}$ and $\widetilde{u}'\in \widehat{\mathcal{M}}_{h^i_{2}(x)\widetilde{y}'}$. We define
        \begin{align*}
            n_u\coloneqq
            \begin{cases}
                n_{\widetilde{u}}, \text{ if }n_{\widetilde{u}}=n_{\widetilde{u}'};\\
                0, \text{ if } n_{\widetilde{u}}\neq n_{\widetilde{u}'}.
            \end{cases}
        \end{align*}
        Analogously, we define the GGS characteristic sign for a cone type singularity with saddle nature that connects to a singularity of any type with attracting nature.
    \end{enumerate}
Now, we  define \textit{the intersection number} between $h^{i}_{k}(x)$ and $h^{j}_{k-1}(y)$ as
\begin{align*}
    n(h^i_k(x), h^j_{k-1}(y))\coloneqq
    \begin{cases}
    \displaystyle\sum_{u \in \widehat{\mathcal{M}}_{xy}}n_u,  \text{ if } x \text{ or } y \text{ is a saddle cone }; \\
    \displaystyle\sum_{\widetilde{u} \in \widehat{\mathcal{M}}_{h^i_k\!(x) h^j_{k\!-\!1}\!(y)}} \!\! \!\!\! n_{\widetilde{u}},  \text{ otherwise.}  
\end{cases}
\end{align*}
where, the last sum is taken over all $\widetilde{u} \in \widehat{\mathcal{M}}_{h^i_k(x) h^j_{k-1}(y)}$ and $\widehat{\mathcal{M}}_{h^i_k(x) h^j_{k-1}(y)}$ lies in the morsified manifold.  
\end{definition}

\begin{lemma}
\label{lemma_cone}
    Let $(M, X)$ be a GGS pair satisfying the condition $\mathcal{H}$. If $y$ is a cone singularity of saddle nature, then:
    \begin{align*}
        n(h^i_2(x),h^1_1(y))\cdot n(h^1_1(y),h^l_0(z))=0, 
    \end{align*}
    for all $h^i_2(x)$, $h^l_0(z)$, with $x,z\in \operatorname{Sing}(X)$.
\end{lemma}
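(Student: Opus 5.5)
The plan is to reduce the statement to a local computation in the morsification of the saddle cone $y$, using case $(iii)$ of Definition \ref{def:intersection}. Since $y$ is a saddle cone, both factors are computed by the first branch of the intersection number. That is, $n(h^i_2(x),h^1_1(y))=\sum_{u\in\widehat{\mathcal{M}}_{xy}} n_u$ and $n(h^1_1(y),h^l_0(z))=\sum_{v\in\widehat{\mathcal{M}}_{yz}} n_v$. Here $n_u$ is either the common sign of the two lifts $\widetilde u,\widetilde u'$ ending at $\mathfrak{h}(y)=\{\widetilde y,\widetilde y'\}$, or $0$ if the signs differ. It therefore suffices to show that if some $u\in\widehat{\mathcal{M}}_{xy}$ has $n_u\neq 0$, then every $v\in\widehat{\mathcal{M}}_{yz}$ has $n_v=0$ (or, symmetrically, the reverse).

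First, I will fix the morsification of the saddle cone from \cite{2021homotopical}, as reused in Theorem \ref{teo:morsificacao}. Locally, the isolating block of $y$ is replaced by a smooth block containing two hyperbolic saddles $\widetilde y,\widetilde y'$ on two sheets. The lines entering the cone through the regular part lift to a pair $\widetilde u\in\widehat{\mathcal M}_{h^i_2(x)\widetilde y}$ and $\widetilde u'\in\widehat{\mathcal M}_{h^i_2(x)\widetilde y'}$, and the outgoing lines lift similarly. Next, I will describe the orientations. The unstable manifolds $W^u(\widetilde y)$ and $W^u(\widetilde y')$ carry fixed orientations. The characteristic sign of $\widetilde u$ compares the orientation induced on $W^s(\widetilde y)$ (the normal to $W^u(\widetilde y)$) with the orientation of $W^u(h^i_2(x))$ transported along $\widetilde u$. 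The sign of a line $\widetilde v$ leaving $\widetilde y$ compares the orientation of $W^u(\widetilde y)$ with the attractor's side.

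The key step is a parity or orientation bookkeeping identity for the two sheets. The morsification glues the two sheets at the cone point with opposite local orientation, which is the reflection that makes $\widetilde y$ and $\widetilde y'$ mirror images. Consequently, for the lifts of a single line, $n_{\widetilde u}n_{\widetilde v}=-\,n_{\widetilde u'}n_{\widetilde v'}$. In words, the product of incoming and outgoing signs through $\widetilde y$ is the negative of that through $\widetilde y'$. Applying this identity with any $u$ and $v$: if $n_{\widetilde u}=n_{\widetilde u'}$, then $n_{\widetilde v}=-n_{\widetilde v'}$, so $n_v=0$. If instead the incoming lifts already disagree, then $n_u=0$. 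Hence each product $n_u n_v$ vanishes. Because the conditions are uniform in $u$ and $v$ (all lines through the regular part into one cone sheet pair behave alike), summing over $u$ and $v$ gives the claimed product $n(h^i_2(x),h^1_1(y))\cdot n(h^1_1(y),h^l_0(z))=0$. The case where $x$ or $z$ is itself singular of another type changes nothing, since only the lifts at $y$ enter.

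I expect the main obstacle to be establishing the orientation identity rigorously from the specific morsified model. It requires checking that the fixed orientations on $W^u(\widetilde y)$ and $W^u(\widetilde y')$ can be chosen (or are forced) so that the mirror relation holds, and that the identity is independent of that choice. A sign flip of $W^u(\widetilde y)$ changes both $n_{\widetilde u}$ and $n_{\widetilde v}$, so the product is invariant. I would verify this first in the explicit two-saddle model picture, then use that invariance to conclude in general.
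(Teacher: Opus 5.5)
Your reduction matches the paper's. Both factors are sums of the signs $n_u$, $n_v$ from case $(iii)$ of Definition~\ref{def:intersection}, and everything hinges on the identity $n_{\widetilde u}n_{\widetilde v}+n_{\widetilde u'}n_{\widetilde v'}=0$ for the lifts through $\widetilde y$ and $\widetilde y'$. Once you have it, $n_u\neq 0$ forces $n_{\widetilde v}=-n_{\widetilde v'}$, hence $n_v=0$, and the product vanishes because $\sum_{u,v}n_un_v=0$ by distributivity. No ``uniformity'' of the lines is needed.

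The gap is that this identity, which is the whole content of the lemma, is never established. The reason you give, that the morsification glues the two sheets at the cone point with opposite local orientation, is not something the construction provides. You yourself leave its verification as the main obstacle. Your observation that $n_{\widetilde u}n_{\widetilde v}$ is unchanged when the orientation of $W^u(\widetilde y)$ is flipped is correct. However, it only shows the identity is independent of choices, not that it holds.

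The paper closes this step without orientation bookkeeping. Since $\widetilde M$ carries a genuine Morse--Smale flow, $\partial^m_1\circ\partial^m_2=0$. In the local configuration of Figure~\ref{fig_teorema_02}, the coefficient of $\widetilde z$ in $\partial^m_1\partial^m_2\langle\widetilde x\rangle$ is exactly $n_{\widetilde u}n_{\widetilde v}+n_{\widetilde u'}n_{\widetilde v'}$. If both $n_u$ and $n_v$ were nonzero, this would equal $2n_{\widetilde u}n_{\widetilde v}\neq 0$, a contradiction.

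If you prefer your local route, the mechanism to verify is not an orientation reversal in the gluing. Rather, $n_{\widetilde u}n_{\widetilde v}=\pm 1$ according to whether the flow lines from $\widetilde x$ that shadow $\widetilde u$ and then $\widetilde v$ lie to the left or to the right of $\widetilde u$ in the oriented disk $W^u(\widetilde x)$. The two lifts $\widetilde u,\widetilde u'$ of $u$ (and $\widetilde v,\widetilde v'$ of $v$) run parallel in a thin strip. What you must check in the explicit model is that the sector at $\widetilde y$ leading into $\widetilde v$ and the sector at $\widetilde y'$ leading into $\widetilde v'$ lie on mirror-image sides of that strip, which makes the two products opposite. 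Carried out, this argument would be slightly more robust than the appeal to $\partial^2=0$. It does not need $\widetilde u\#\widetilde v$ and $\widetilde u'\#\widetilde v'$ to be the only broken trajectories from $\widetilde x$ to $\widetilde z$, which is what isolates those two terms in $\partial^m_1\partial^m_2\langle\widetilde x\rangle$.
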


\begin{proof}
Let $h^i_2(x)$ be a repelling generator that connects with $y$, which in turn, connects with an attracting generator $h^l_0(z)$, as represented in Figure \ref{fig_teorema_02}. By the definition of the GGS characteristic sign we have:
\begin{align*}
    n_u\coloneqq
    \begin{cases}
    n_{\widetilde{u}}, \text{ if }n_{\widetilde{u}}=n_{\widetilde{u}'};\\
   \ 0, \ \text{ if } n_{\widetilde{u}}\neq n_{\widetilde{u}'};
    \end{cases}\qquad \text{and} \qquad
    n_v\coloneqq
    \begin{cases}
    n_{\widetilde{v}}, \text{ if }n_{\widetilde{v}}=n_{\widetilde{v}'};\\
 \   0,\  \text{ if } n_{\widetilde{v}}\neq n_{\widetilde{v}'}.
    \end{cases}
\end{align*}

\begin{figure}[!ht]
\centering
\begin{overpic}[unit=1mm, scale=.25, width=7cm]{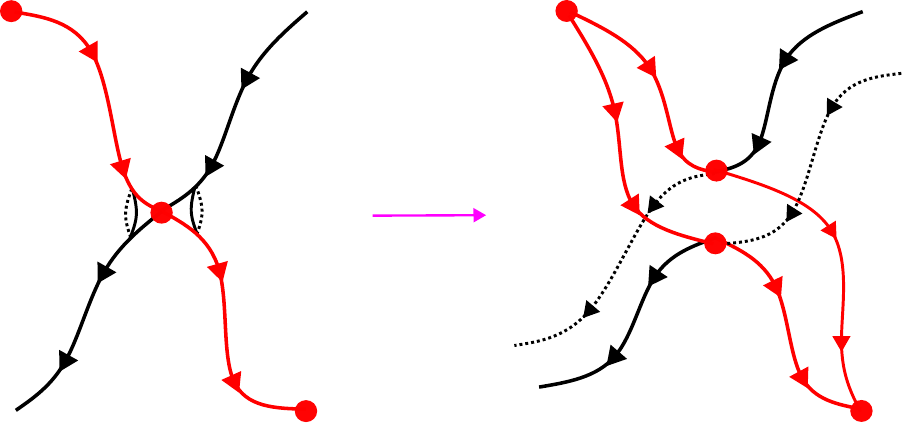}
\put(13,35){$u$}
\put(27,15){$v$}
\put(0,48){$x$}
\put(17,18){$y$}
\put(33,-4){$z$}
\put(46.5,25){$\mathfrak{h}$}
\put(61,48){$\widetilde{x}$}
\put(73,40){$\widetilde{u}$}
\put(62,30){$\widetilde{u}'$}
\put(78,32){$\widetilde{y}$}
\put(78,14){$\widetilde{y}'$}
\put(95,12){$\widetilde{v}$}
\put(83,5){$\widetilde{v}'$}
\put(94,-4){$\widetilde{z}$}
\end{overpic}
\caption{Morsification of a saddle cone singularity and its connections with attracting and repelling singularities.}
\label{fig_teorema_02}
\end{figure}

Since the sign is determined from a Morse-Smale flow on the smooth manifold $\widetilde{M}$, as shown in Figure \ref{fig_teorema_02}, we necessarily have either $n_u=0$ or $n_v=0$. Otherwise, we would obtain:
\begin{align*}
    \partial^m_1 \circ \partial^m_2 (\widetilde{x})\neq 0,
\end{align*}
where $\partial^m_*$ denotes the boundary map of the Morse complex of $\widetilde{M}$. Indeed, if both $n_u\neq 0$ and $n_v\neq 0$, then it must hold that $n_{\widetilde{u}}=n_{\widetilde{u}'}$ and $n_{\widetilde{v}}= n_{\widetilde{v}'}$, which implies that
\begin{align*}
    n_{\widetilde{u}'}\cdot n_{\widetilde{v}'}+ n_{\widetilde{u}}\cdot n_{\widetilde{v}}\neq 0.
\end{align*}
This leads to a contradiction, since we are in a Morse case. Therefore
\begin{align*}
    n(h^i_2(x), h^j_1(y))\cdot n(h^j_1(y), h^l_0(z)) =0.
\end{align*}
\end{proof}

\begin{theorem}
\label{teo:complexo}
Let $(M, X)$ be a GGS pair satisfying the condition $\mathcal{H}$. Then the GGS chain complex $(C_*(M, X; \Z), \triangle_*)$ is well-defined, that is,  for all $k \in \Z$,  $$\triangle_{k-1}\circ \triangle_{k}=0.$$
\end{theorem}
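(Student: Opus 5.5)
Since the chain groups are concentrated in degrees $0,1,2$, the composition $\triangle_{k-1}\circ\triangle_k$ can be nonzero only for $k=2$. The whole proof therefore reduces to showing $\triangle_1\circ\triangle_2=0$. Concretely, for every repelling generator $h^i_2(x)$ and every attracting generator $h^l_0(z)$ I will show
\begin{align*}
\sum_{y\in \operatorname{Sing}(X)}\ \sum_{j=1}^{\eta_1(y)} n(h^i_2(x),h^j_1(y))\cdot n(h^j_1(y),h^l_0(z))=0 .
\end{align*}
The strategy is to rewrite this coefficient in terms of the Morse boundary operator $\partial^m$ of the morsification $(\widetilde{M},\widetilde{X},\mathfrak{h},\mathfrak{p})$ from Theorem \ref{teo:morsificacao}. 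On $\widetilde{M}$ the flow is Morse--Smale without saddle connections, so $\partial^m_1\circ\partial^m_2=0$ holds classically.

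First I split the sum over saddle generators $h^j_1(y)$ into two classes. The first class consists of saddles $y$ of saddle-cone type. For these, Lemma \ref{lemma_cone} already gives that every product $n(h^i_2(x),h^1_1(y))\cdot n(h^1_1(y),h^l_0(z))$ vanishes, so they drop out.

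The second class consists of all remaining saddles. For them the intersection numbers are, by Definition \ref{def:intersection}, sums of Morse characteristic signs over the morsified moduli spaces $\widehat{\mathcal{M}}_{h^i_k(x)h^j_{k-1}(y)}$. When a generator corresponds to several morsified singularities, as for cross-caps, these are unions. Expanding the products then gives
\begin{align*}
\sum_{\widetilde{y}}\Big(\sum_{\widetilde{x}\in h^i_2(x)} n^m(\widetilde{x},\widetilde{y})\Big)\Big(\sum_{\widetilde{z}\in h^l_0(z)} n^m(\widetilde{y},\widetilde{z})\Big),
\end{align*}
with $\widetilde{y}$ ranging over the morsified non-cone saddles.

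The remaining work is to match this expression with the coefficient of $\sum_{\widetilde{z}}\widetilde{z}$ in $\partial^m_1\partial^m_2(\sum_{\widetilde{x}}\widetilde{x})$. The obstruction is the set of morsified saddles $\widetilde{y},\widetilde{y}'$ coming from saddle cones, which appear in the Morse sum but not in the GGS sum. I will handle them by the argument in the proof of Lemma \ref{lemma_cone}. For a saddle cone $y$, the two morsified saddles together contribute
\begin{align*}
n_{\widetilde{u}}n_{\widetilde{v}}+n_{\widetilde{u}'}n_{\widetilde{v}'} .
\end{align*}
This quantity vanishes by the local model of the cone morsification from \cite{2021homotopical}: there the two pairs of trajectories bound a disk region, so the contributions cancel. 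I expect this local check to be the key point.

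Next I go through the catalogue of Theorem \ref{teo:morsificacao}, case by case for the local types: cross-cap folds, double and triple crossings, and mixed regions. In each case I verify that the correspondence $\mathfrak{h}$ sends each GGS moduli space bijectively onto the union of the morsified moduli spaces with the prescribed signs. In the cross-cap case one GGS flow line $u$ corresponds to two lines $\widetilde{u}_1,\widetilde{u}_2$, and both are counted.

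With this identification, the coefficient in question equals a coefficient of $\partial^m_1\circ\partial^m_2=0$. Hence $\triangle_1\circ\triangle_2=0$.

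The main obstacle is this bookkeeping step. One must make sure that grouping several morsified singularities into a single generator $h^1_k(x)$, for cross-caps and cones, is compatible on both sides of the composition. In particular, a broken trajectory in $\widetilde{M}$ from $\widetilde{x}$ to $\widetilde{z}$ must pass through a morsified saddle that belongs to exactly one GGS saddle generator. I would check this using condition $\mathcal{H}$, which rules out saddle connections and cross-cap to double or triple crossing folds, together with the gluing of blocks along their entrance and exit boundaries.
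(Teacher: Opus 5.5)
Your plan is the paper's own: reduce to $\triangle_1\circ\triangle_2=0$, discard saddle-cone generators with Lemma~\ref{lemma_cone}, rewrite the remaining intersection numbers as sums of Morse signs on the morsification, and conclude from $\partial^m_1\circ\partial^m_2=0$. You are more explicit than the paper about the Morse-side terms $n_{\widetilde u}n_{\widetilde v}+n_{\widetilde u'}n_{\widetilde v'}$ from the two saddles $\widetilde y,\widetilde y'$ that replace a saddle cone; the paper only absorbs these in its ``without loss of generality'' step.

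However, your bookkeeping misses a second class of morsified saddles that carry no GGS generator. The morsification of an \emph{attracting or repelling} cone also contains regular saddles. These appear in $\partial^m_1\circ\partial^m_2$ but not in $\triangle_1\circ\triangle_2$. So your claim that every broken trajectory from $\widetilde x$ to $\widetilde z$ passes through a morsified saddle belonging to exactly one GGS saddle generator is false, and the final identification of coefficients does not go through as written. The missing observation, which the paper states explicitly, is this: each such saddle $\widetilde w$ is doubly connected, with opposite signs, to the regular repeller (resp.\ attractor) of its cone. Hence $n(\widetilde x,\widetilde w)=0$ for every source $\widetilde x$ (resp.\ $\partial^m_1\langle\widetilde w\rangle=0$), and all terms through $\widetilde w$ vanish.

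A smaller point concerns the identity $n_{\widetilde u}n_{\widetilde v}+n_{\widetilde u'}n_{\widetilde v'}=0$. Justify it by a local orientation check at $\widetilde y,\widetilde y'$, not by a disk of orbits bounded by the two broken trajectories. In the tube model such a disk exists when $u$ and $v$ lie on different sheets of the cone. When they lie on the same sheet, the orbits adjacent to the two broken trajectories leave in opposite directions around $\widetilde x$ and need not sweep out a common disk. The products still cancel, because the two hyperbolic sectors of that sheet between $u$ and $v$ are carried to $\widetilde y$ and $\widetilde y'$ with opposite rotational sense.
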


\begin{proof}
Since $\triangle_k=0$ for all $k \in \Z$ with $k\neq 1, 2$, to prove that $(C_*(M, X; \Z), \triangle_*)$  is a chain complex, it is sufficient to verify that  $\triangle_{1}\circ \triangle_{2}=0.$  Hence, it suffices to show that $\triangle_{1}\circ \triangle_{2}\langle h^i_2(x) \rangle=0$, for any generator $\langle h^i_2(x) \rangle$. According to  Definition \ref{def:GS_chain}, we have:
\begin{align*}
   & \triangle_{1}\circ \triangle_{2}\langle h^i_2(x) \rangle\\ & =\triangle_{1}\left( \sum_{y \in \operatorname{Sing}(X)}\sum_{j=1}^{\eta_1(y)}n(h^i_2(x),h^j_1(y))\cdot \langle h^j_1(y)\rangle \right)\\
    & =\sum_{y \in \operatorname{Sing}(X)}\sum_{j=1}^{\eta_1(y)}n(h^i_2(x),h^j_1(y))\cdot \triangle_{1} \langle h^j_1(y)\rangle\\
    & =\sum_{y \in \operatorname{Sing}(X)}\sum_{j=1}^{\eta_1(y)}n(h^i_2(x),h^j_1(y))\cdot \left( \sum_{z \in \operatorname{Sing}(X)}\sum_{l=1}^{\eta_0(z)}n(h^j_1(y), h^l_0(z))\cdot \langle h^l_0(z)\rangle\right)\\
    & =\sum_{z \in \operatorname{Sing}(X)} \sum_{y \in \operatorname{Sing}(X)} \left( \sum_{j=1}^{\eta_1(y)} \sum_{l=1}^{\eta_0(z)} n(h^i_2(x),h^j_1(y)) \cdot n(h^j_1(y), h^l_0(z))\right) \langle h^l_0(z)\rangle .
\end{align*}
By Lemma \ref{lemma_cone} and the last equality, we  conclude that $\triangle_{1}\circ \triangle_{2} \langle h^i_2(x) \rangle$ is independent of saddle cone singularities. Thus, without loss of generality, we proceed under the assumption that there are no cone-type singularities with saddle nature. In this case, by the definition of the intersection number, we have:
\begin{align*}
    n(h^i_2(x),h^j_1(y)) = \!\!\!\sum_{\widetilde{u} \in \widehat{\mathcal{M}}_{h^i_2(x) h^j_{1}(y)}} \!\!\!\!\!\!n_{\widetilde{u}}  \ \ \ \ \text{   and   } \ \ \ \  n(h^j_1(y), h^l_0(z)) =\!\!\!\sum_{\widetilde{v} \in \widehat{\mathcal{M}}_{h^j_1(y) h^l_{0}(z)}} \!\!\!\!\!\!n_{\widetilde{v}}.
\end{align*}
This observation is general and will be used repeatedly throughout the proof. Another important point is that the saddle singularities arising from the morsification of  cone singularities with attracting or repelling nature do not contribute to the sum in the boundary map of the Morse chain complex. This is because they always appear as part of a double connection with the regular attractor or repeller associated with the cone generator of the GGS chain complex.

The central idea of the proof is to characterize $\triangle_{2}\langle h^i_2(x) \rangle $ and $\triangle_{1}\langle h^j_1(y) \rangle$ using the Morse chain complex $(C^m(\widetilde{M},\widetilde{X}; \Z), \partial^m_*)$ of the morsified manifold. We analyze each case individually as follows.

\begin{enumerate}
\item\label{item_1}
Let $h^i_2(x)$ be a generator, where $x\in M(\mathcal{C}_n)$, $M(\mathcal{D}_n)$, $M(\mathcal{T}_{2k+1})$, or $M(\mathcal{R})$. In these cases, there is a bijection between the generators of the GGS chain complex associated to $x$ and the critical points associated to $x$ on the morsification, which are generators of the Morse complex of $\widetilde{M}$. Thus
\begin{align*}
\triangle_2 \langle h^i_2(x) \rangle &=\sum_{y\in \operatorname{Sing}(X)} \sum_{j=1}^{\eta_1(y)} n(h^i_2(x), h^j_1(y)) \cdot \langle h^j_1(y) \rangle\\
&=\sum_{y\in \operatorname{Sing}(X)} \sum_{j=1}^{\eta_1(y)}\sum_{\widetilde{u}\in \widehat{\mathcal{M}}_{h^i_2(x)h^j_1(y)}} n_{\widetilde{u}} \cdot \langle h^j_1(y) \rangle \\
&=\sum_{\widetilde{y}\in Crit_1(\varphi_{\widetilde{X}})} \sum_{\widetilde{u}\in \widehat{\mathcal{M}}_{\widetilde{x}^i,\widetilde{y}} } n_{\widetilde{u}} \cdot \langle \widetilde{y} \rangle\\
&=\sum_{\widetilde{y}\in Crit_1(\varphi_{\widetilde{X}})}n(\widetilde{x}^i,\widetilde{y}) \cdot  \langle \widetilde{y} \rangle\\
&=\partial^m_2\langle \widetilde{x}^i \rangle .
\end{align*}

\item \label{item_2}
Let $h^1_2(x)$ be a generator, where $x\in M(\mathcal{W}_n)$. Note that the morsification depends on the fold connections. Since $x\in M(\mathcal{W}_n)$ there are $(n-1)$-folds, and thus up to $(n-1)$ saddle singularities $y_1$, $\cdots$, $y_{n-1}$ of cross-cap type that may connect with $x$. The morsifications of each of these folds are described in the proof of Theorem \ref{teo:morsificacao}, with explicit examples for $n=4$ shown in Figure \ref{fig_teo_w_4}. Consequently, if the morsification associates to $x$ the set of singularities $\{\widetilde{x}^1, \cdots ,\widetilde{x}^p\}$, where $1\leq p\leq 2(n-1)$, then by convention we have:
\begin{align*}
\widehat{\mathcal{M}}_{h^1_2(x)h^j_1(y)}=\widehat{\mathcal{M}}_{\widetilde{x}^1h^j_1(y)}\cup \cdots \cup \widehat{\mathcal{M}}_{\widetilde{x}^ph^j_1(y)}.
\end{align*}
\begin{figure}[!ht]
\centering
\begin{overpic}[unit=1mm, scale=.25, width=8cm]{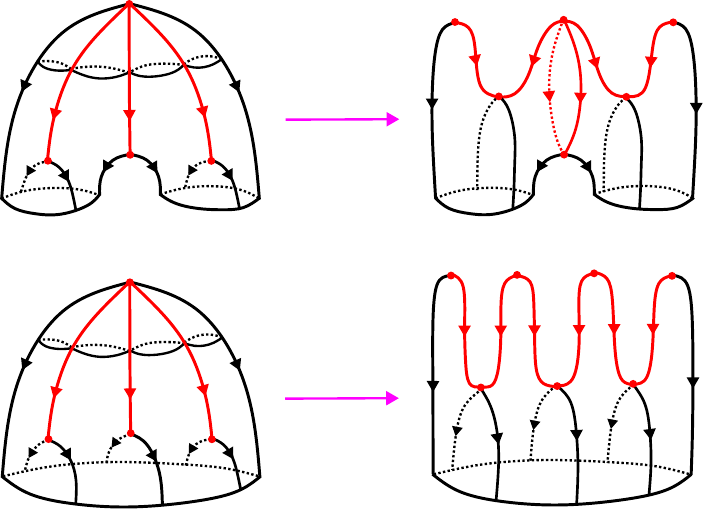}
\put(17,74){$x$}
\put(9.2,50){$y_1$}
\put(16.5,47){$y_2$}
\put(31.5,50){$y_3$}
\put(48,57){$\mathfrak{h}$}
\put(64,71){$\widetilde{x}^1$}
\put(78.5,71){$\widetilde{x}^2$}
\put(94.5,71){$\widetilde{x}^3$}
\put(70,61){$\widetilde{y}_1$}
\put(78.5,45.8){$\widetilde{y}_2$}
\put(87.4,61){$\widetilde{y}_3$}
%%%%%%%%%%%%%%%%%%%%%%%%%%%%
\put(17,33.5){$x$}
\put(8.7,10.5){$y_1$}
\put(20.3,10.8){$y_2$}
\put(30.9,10.8){$y_3$}
\put(48,18){$\mathfrak{h}$}
\put(63,35){$\widetilde{x}^1$}
\put(73,35){$\widetilde{x}^2$}
\put(84,35){$\widetilde{x}^3$}
\put(94,35){$\widetilde{x}^4$}
\put(66.9,19.5){$\widetilde{y}_1$}
\put(77.4,19.4){$\widetilde{y}_2$}
\put(88.4,19.5){$\widetilde{y}_3$}
\end{overpic}
\caption{Examples of morsification of repeller cross-cap singulariries $x\in M(\mathcal{W}_4)$. } 
\label{fig_teo_w_4}
\end{figure}

Thus,
\begin{align*}
& \triangle_2 \langle h^1_2(x) \rangle \\ &=\sum_{y\in \operatorname{Sing}(X)} \sum_{j=1}^{\eta_1(y)} n(h^1_2(x), h^j_1(y)) \cdot \langle h^j_1(y) \rangle\\
&=\sum_{y\in \operatorname{Sing}(X)} \sum_{j=1}^{\eta_1(y)} \sum_{\widetilde{u}\in \widehat{\mathcal{M}}_{h^1_2(x)h^j_1(y)}} n_{\widetilde{u}} \cdot \langle h^j_1(y) \rangle\\
&=\sum_{y\in \operatorname{Sing}(X)} \sum_{j=1}^{\eta_1(y)} \sum_{\widetilde{u}\in \widehat{\mathcal{M}}_{\widetilde{x}^1h^j_1(y)}} \!\!\!\!\!\!\!\!\! n_{\widetilde{u}} \cdot \langle h^j_1(y) \rangle \ + \cdots  + \!\!\! \!\!\! \sum_{y\in \operatorname{Sing}(X)} \sum_{j=1}^{\eta_1(y)} \sum_{\widetilde{u}\in \widehat{\mathcal{M}}_{\widetilde{x}^ph^j_1(y)}} \!\!\!\!\!\!\!\!\! n_{\widetilde{u}} \cdot \langle h^j_1(y) \rangle\\
&=\sum_{\widetilde{y}\in Crit_1(\varphi_{\widetilde{X}})}\sum_{\widetilde{u}\in \widehat{\mathcal{M}}_{\widetilde{x}^1\widetilde{y}}} n_{\widetilde{u}} \cdot \langle \widetilde{y}\rangle \  + \cdots +\sum_{\widetilde{y}\in Crit_1(\varphi_{\widetilde{X}})} \sum_{\widetilde{u}\in \widehat{\mathcal{M}}_{\widetilde{x}^p\widetilde{y}}} n_{\widetilde{u}} \cdot \langle \widetilde{y} \rangle\\
&=\sum_{\widetilde{y}\in Crit_1(\varphi_{\widetilde{X}})}n(\widetilde{x}^1, \widetilde{y})\cdot \langle \widetilde{y} \rangle  + \cdots + \sum_{\widetilde{y}\in Crit_1(\varphi_{\widetilde{X}})}n(\widetilde{x}^p, \widetilde{y})\cdot \langle \widetilde{y} \rangle\\
&=\partial^m_2\langle \widetilde{x}^1 \rangle + \cdots + \partial^m_2\langle \widetilde{x}^p \rangle .
\end{align*}

\item \label{item_3}
 Let $x\in M(\mathcal{P} \vee\mathcal{Q})$. By the construction on the proof of Theorem \ref{teo:morsificacao}, we have $h(x)=\{x^1, x^2\}$, where $x^1\in M(\mathcal{P})$ and $x^2\in M(\mathcal{Q})$. We follow the convention for the number of natures from Section \ref{sec:GS}. If $x$ has a repelling nature, then both  $x^1$ and $x^2$ also have repelling nature. In the case where $\mathcal{P}$ (resp., $\mathcal{Q}$) is of type $\mathcal{C}_n$ or $\mathcal{W}_n$,  we have:
\begin{align*}
    n(h^i_2(x), h^j_1(y))&=n(h^1_2(x^1), h^j_1(y)) + n(h^i_2(x^2), h^j_1(y));\\
    \text{resp., }  n(h^i_2(x), h^j_1(y))&=n(h^i_2(x^1), h^j_1(y)) + n(h^1_2(x^2), h^j_1(y)).
\end{align*}
In the case where $\mathcal{P}\vee \mathcal{Q}=\mathcal{C}_n \vee \mathcal{W}_m$ or $\mathcal{P}\vee \mathcal{Q}=\mathcal{W}_n\vee \mathcal{W}_m$, following the convention for the number of natures we have $\eta_2(x)=1$ and we have: 
\begin{align*}
    n(h^1_2(x), h^j_1(y))&=n(h^1_2(x^1), h^j_1(y)) + n(h^1_2(x^2), h^j_1(y)).
\end{align*}
These equalities hold in the general case and can be proven independently of type as follows:
\begin{align*}
    n(h^i_2(x), h^j_1(y))&=\sum_{u\in {\widehat{\mathcal{M}}}_{h^i_2(x)h^j_1(y)}} n_{\widetilde{u}}\\
    &=\sum_{u\in {\widehat{\mathcal{M}}}_{h^i_2(x^1)h^j_1(y)}} n_{\widetilde{u}} + \sum_{u\in {\widehat{\mathcal{M}}}_{h^i_2(x^2)h^j_1(y)}} n_{\widetilde{u}}\\
    &=  n(h^i_2(x^1), h^j_1(y))+ n(h^i_2(x^2), h^j_1(y)) . 
\end{align*}
Hence, 
\begin{align*}
    \triangle_2\langle h^i_2(x) \rangle &= \sum_{y\in \operatorname{Sing}(X)}\sum_{j=1}^{\eta_1(y)} n(h^i_2(x), h^j_1(y)) \cdot \langle h^j_1(y) \rangle\\
    &= \sum_{y\in \operatorname{Sing}(X)}\sum_{j=1}^{\eta_1(y)} \left( n(h^i_2(x^1), h^j_1(y))+ n(h^i_2(x^2), h^j_1(y))\right) \cdot \langle h^j_1(y) \rangle\\
    &=\sum_{y\in \operatorname{Sing}(X)}\sum_{j=1}^{\eta_1(y)} n(h^i_2(x^1), h^j_1(y)) \cdot \langle h^j_1(y) \rangle \  +\\
    &+ \sum_{y\in \operatorname{Sing}(X)}\sum_{j=1}^{\eta_1(y)} n(h^i_2(x^2), h^j_1(y)) \cdot \langle h^j_1(y) \rangle\\
    &=\triangle_2\langle h^i_2(x^1) \rangle + \triangle_2\langle h^i_2(x^2) \rangle. 
\end{align*}
Since $x^1\in M(\mathcal{P})$ and $x^2\in M(\mathcal{Q})$, where $M(\mathcal{P})=\mathcal{C}_n$, $\mathcal{W}_n$, $\mathcal{D}_n$, $\mathcal{T}_{2k+1}$ and $M(\mathcal{Q})=\mathcal{W}_m$, $\mathcal{D}_m$, $\mathcal{T}_{2k'+1}$,  by the previous  items (\ref{item_1}) and (\ref{item_2}), both are characterized by the Morse operator. Therefore,
\begin{align*}
     \triangle_2\langle h^i_2(x) \rangle &=\triangle_2\langle h^i_2(x^1) \rangle + \triangle_2\langle h^i_2(x^2) \rangle\\
     &=\sum_{l=1}^{p} \partial^m_2\langle \widetilde{x}^{1,l} \rangle + \sum_{l'=1}^{p'}\partial^m_2\langle \widetilde{x}^{2,l'}\rangle ,
\end{align*}
where $1\leq p\leq 2(n-1)$ and $1\leq p'\leq 2(m-1)$ and depends on the morsifications of $\mathcal{W}_n$ and $\mathcal{W}_m$. If $\mathcal{P}\neq \mathcal{W}_n$ and  $\mathcal{Q}\neq \mathcal{W}_m$ then $p=p'=1$.

\item \label{item_6}
If $x \in M(\mathcal{W}_n\mathcal{Q})$, where $\mathcal{Q}\in \{\mathcal{D}_m, \mathcal{T}_{2k+1} \}$, and has a repelling nature then by the convention of Section \ref{sec:GS} we have $\eta_2 (x)=m, 2k+1$. By the construction in Theorem \ref{teo:morsificacao} we have $h(x)=\{x^1, x^2\}$, where $x^1\in M(\mathcal{W}_n)$ and $x^2\in M(\mathcal{Q})$ with $\mathcal{Q}\in \{\mathcal{D}_m, \mathcal{T}_{2k+1}\}$. From this, we have:
\begin{align*}
    n(h^i_2(x), h^j_1(y))&=\sum_{\widetilde{u}\in \widehat{\mathcal{M}}_{h^i_2(x) h^j_1(y)}}n_{\widetilde{u}}\\
    &=\sum_{\widetilde{u}\in \widehat{\mathcal{M}}_{h^1_2(x^1) h^j_1(y)}}n_{\widetilde{u}}+\sum_{\widetilde{u}\in \widehat{\mathcal{M}}_{h^i_2(x^2) h^j_1(y)}}n_{\widetilde{u}}\\
    &=n(h^1_2(x^1), h^j_1(y))+n(h^i_2(x^2), h^j_1(y)).
\end{align*}
Then,
\begin{align*}
    \triangle_2\langle h^i_2(x) \rangle &=  \triangle_2\langle h^1_2(x^1) \rangle +  \triangle_2\langle h^i_2(x^2) \rangle\\
    &= \left( \partial^m_2 \langle \widetilde{x}^{1,1} \rangle + \cdots \partial^m_2 \langle \widetilde{x}^{1,k} \rangle\right) + \partial^m_2 \langle \widetilde{x}^{2,i} \rangle,
\end{align*}
where $k$ depends on the morsifications of the constructions of $x^1\in M(\mathcal{W}_m)$.

\item \label{item_7}
If $x \in M(\mathcal{D}_n\mathcal{T}_{2k+1})$ with  repelling nature, then $\eta_2(x)=n+2k$. By the construction in Theorem \ref{teo:morsificacao}, we have a bijection between the GGS generators associated to $x$ and the critical points in the morsified manifold associated to $x$. Therefore,
\begin{align*}
\triangle_2 \langle h^i_2(x)\rangle = \partial^m_2 \langle \widetilde{x}^i \rangle .    
\end{align*}

\item \label{item_8}
Let $h^j_1(y)$ be a generator associated to a regular, cross-cap, double or triple singularity. Again, we have a bijection between the generators of the morsification and those of the GGS chain complex, even when $y$ is a cross-cap with saddle nature, as illustrated in Figure \ref{fig_teorema}. Therefore,
\begin{align*}
\triangle_1 \langle h^j_1(y) \rangle &=\sum_{z\in \operatorname{Sing}(X)} \sum_{l=1}^{\eta_0(z)} n(h^j_1(y), h^l_0(z)) \langle h^l_0(z) \rangle\\
&=\sum_{\widetilde{z}\in Crit_0(\varphi_{\widetilde{X}})}n(\widetilde{y}^j,\widetilde{z})\langle \widetilde{z} \rangle \ = \ \partial^m_1 \langle \widetilde{y}^j \rangle .
\end{align*}
\begin{figure}[!ht]
\centering
\vspace{0.4cm}
\begin{overpic}[ unit=1mm, scale=.25, width=8cm]{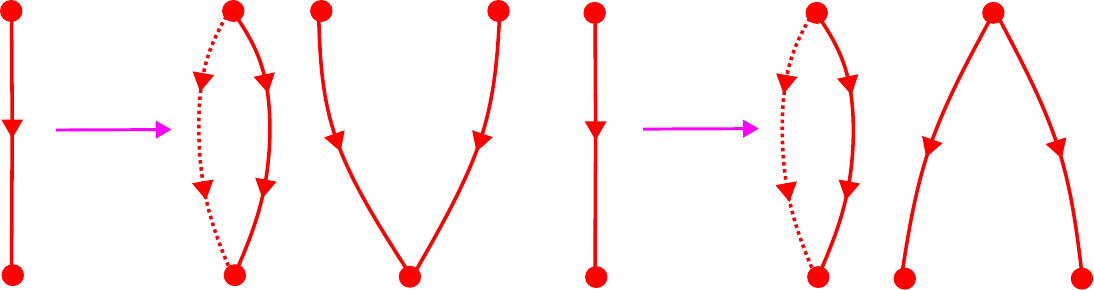}
\put(-2.5,13){$u$}
\put(0,28){$x$}
\put(0,-3){$y$}
\put(8.5,16){$\mathfrak{h}$}
\put(26,8){$\widetilde{u}_1$}
\put(13,8){$\widetilde{u}_2$}
\put(20.5,28){$\widetilde{x}$}
\put(20.5,-3.5){$\widetilde{y}$}
\put(28,28){$\widetilde{x}$}
\put(44.5,28){$\widetilde{x}'$}
\put(36,-3.5){$\widetilde{y}$}
\put(39.5,15.5){$\widetilde{u}_1$}
\put(31,15.5){$\widetilde{u}_2$}
%%%%%%%%% segundo pedaço
\put(55,8){$v$}
\put(53.5,28){$y$}
\put(53.5,-3){$z$}
\put(62,16){$\mathfrak{h}$}
\put(79,8){$\widetilde{v}_1$}
\put(67,8){$\widetilde{v}_2$}
\put(74,28){$\widetilde{y}$}
\put(74,-3.5){$\widetilde{z}$}
\put(82,-3.5){$\widetilde{z}$}
\put(98,-3.5){$\widetilde{z}'$}
\put(90,28){$\widetilde{y}$}
\put(94,8){$\widetilde{v}_1$}
\put(85,8){$\widetilde{v}_2$}
\end{overpic}
\vspace{0.2cm}
\caption{Morsification possibilities for connections between cross-caps through folds.}
\label{fig_teorema}
\end{figure}
\end{enumerate}

Now, that we have characterized each boundary map via the boundary map of the morsification, we can do the same for their compositions. It follows directly from items (\ref{item_1}) -(\ref{item_8}) and the linearity of the boundary maps that:
\begin{align*}
    \triangle_1 \circ \triangle_2 \langle h^i_2 (x) \rangle= \sum_{l=1}^{p} \partial^m_1 \circ \partial^m_2 (\widetilde{x}^l),
\end{align*}
where $p \in \N$ depends on the morsification process of $x$ as described in Theorem \ref{teo:morsificacao}. The result follows from the fact that $\partial^m_1 \circ \partial^m_2 =0$. Furthermore, cases involving mixed singularities with more than two types follow the same reasoning and are omitted here for simplicity. Therefore, in all cases we have $\triangle_1 \circ \triangle_2 \langle h^i_2 (x) \rangle=0$.
\end{proof}

\subsection{Matrix of  a GGS boundary operator}
The main objective of this subsection is to introduce the matrix $\triangle$ associated with the GGS chain complex and to characterize some of its properties.

\begin{definition}
    Let $(C_*(M,X;\mathbb{Z}), \triangle_*)$ be a GGS chain complex. Define 
$$
\mathcal{C} \coloneqq C_0(M,X;\mathbb{Z}) \oplus C_1(M,X;\mathbb{Z}) \oplus C_2(M,X;\mathbb{Z}),
$$
and consider the $\mathbb{Z}$-linear map
$$\triangle : \mathcal{C} \longrightarrow \mathcal{C},$$
called the \emph{operator associated with the GGS chain complex}, such that with respect to a fixed basis $\{J_0,J_1,J_2\}$, where $J_i$ denotes the set of generators of $C_i(M,X;\mathbb{Z})$, the operator $\triangle$ is defined by
$$
\triangle|_{J_0} = 0, \qquad
\triangle|_{J_1} = \triangle_1, \qquad
\triangle|_{J_2} = \triangle_2,
$$
and extended by linearity.
\end{definition}

Thus, the matrix of the operator $\triangle$ has the following block form
\begin{align*}
\triangle \coloneqq
    \begin{pmatrix}
        0 & \triangle_1 & 0\\
        0 & 0 & \triangle_2\\
        0 & 0 & 0
    \end{pmatrix},   
\end{align*}
$\triangle$ is a square matrix with dimension equal to that of $\mathcal{C}$. It is a strictly upper triangular matrix composed of zero submatrices, except for the submatrices $\triangle_1$ and $\triangle_2$ which are, a priori, non-zero and correspond to the matrices of the GGS boundary operators in the basis $J_1$ and $J_2$, respectively.

In addition to the hypotheses assumed throughout this paper, see Remarks~\ref{remark_conexao_cross} and~\ref{remark_saddles}  and Definition \ref{def_hipoteses}, for the next theorem we require that $M$ admits a morsification $\widetilde{M}$ that is orientable. We fix an orientation for $\widetilde{M}$ and accordingly fix the same orientation for all unstable manifolds $W^u(\widetilde{x})$, where $\widetilde{x}\in Sing(\widetilde{X})$.

\begin{theorem}
\label{teo_matriz}
If $\triangle$ is the linear operator matrix associated to a GGS chain complex $(C_*(M,X;\Z), \triangle_*)$ and $M$ admits a morsification $\widetilde{M}$ that is orientable, then:
\begin{enumerate}
    \item[$(a)$] the entries of $\triangle_{1}$ and $\triangle_{2}$ belong to the set $\{-1,0,1\}$;
    \item[$(b)$] each column of $\triangle_1$ contains either exactly two non-zero entries, namely, $-1$ and $1$, or is zero;
    \item[$(c)$] each row of $\triangle_2$ contains either exactly two non-zero entries, namely $-1$ and $1$, or is zero.
\end{enumerate}
\end{theorem}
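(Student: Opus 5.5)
The plan is to reduce everything to the local Morse–Smale picture on the orientable morsification $\widetilde{M}$, where $\widetilde{M}$ is a closed orientable surface and $\widetilde{X}$ has no saddle connections. On such a surface, every saddle $\widetilde{y}$ has exactly two unstable separatrices and two stable separatrices. Each unstable separatrix ends at a sink. Each stable separatrix comes from a source. Once orientations of the unstable manifolds are fixed coherently with the orientation of $\widetilde{M}$, the standard computation gives the following. The two unstable separatrices of $\widetilde{y}$ carry opposite characteristic signs, since they leave $\widetilde{y}$ in opposite directions along the oriented one-dimensional $W^u(\widetilde{y})$. Likewise, the two stable separatrices enter $\widetilde{y}$ from opposite sides, so as boundary of the oriented two-cells $W^u(\widetilde{x})$ they contribute opposite signs. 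Hence $\partial^m_1\langle\widetilde{y}\rangle=\langle\widetilde{z}\rangle-\langle\widetilde{z}'\rangle$, which vanishes if $\widetilde{z}=\widetilde{z}'$. Dually, the coefficient of $\widetilde{y}$ in $\partial^m_2$ is $+1$ from one source and $-1$ from another, and the two cancel when the source is the same.

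Next I would transport this to $\triangle_1$ and $\triangle_2$ using the identifications from the proof of Theorem \ref{teo:complexo}. By item (\ref{item_8}), each saddle generator $h^j_1(y)$ corresponds bijectively to a single Morse saddle $\widetilde{y}^j$. Also, $\triangle_1\langle h^j_1(y)\rangle$ equals $\partial^m_1\langle\widetilde{y}^j\rangle$ once the sinks are regrouped into GGS attracting generators. I must check that $\mathfrak{h}$ never sends both unstable separatrices of a single $\widetilde{y}^j$ to distinct Morse sinks lumped into one generator, such as a cross-cap generator $h^1_0(z)$ with several morsified points. If they are lumped, their contributions cancel. In either case the column of $\triangle_1$ is either zero or contains exactly one $+1$ and one $-1$, which gives (b) and the column part of (a).

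For $\triangle_2$, I would fix a generator $h^j_1(y)$ and read off its row. The entries in that row are the coefficients $n(h^i_2(x),h^j_1(y))$, obtained by summing the Morse coefficients of $\widetilde{y}^j$ in $\partial^m_2\langle\widetilde{x}^l\rangle$ over the morsified sources $\widetilde{x}^l$ grouped into $h^i_2(x)$. These are exactly the computations in items (\ref{item_1})–(\ref{item_7}). Only two stable separatrices reach $\widetilde{y}^j$, with opposite signs, so the whole row is a regrouping of one $+1$ and one $-1$. That regrouping yields either two nonzero entries $\pm1$ or zero, which proves (c) and the remaining part of (a).

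The main obstacle is the saddle cone, where Definition \ref{def:intersection}(iii) uses the averaged sign rather than a Morse sum, and one GGS generator $h^1_1(y)$ corresponds to two Morse saddles $\widetilde{y},\widetilde{y}'$. To handle it, I would use the orientability of $\widetilde{M}$ and the explicit cone morsification of \cite{2021homotopical} to show three things:
\begin{itemize}
\item $n_u\in\{-1,0,1\}$ automatically;
\item the two repelling connections into $y$ have $n_{\widetilde{u}}=n_{\widetilde{u}'}$ of opposite signs for the two incoming branches;
\item likewise for the outgoing branches.
\end{itemize}
This reduces the cone to the same one-plus-one-minus pattern. I would do this check by direct inspection of Figure \ref{fig_teorema_02}, using Lemma \ref{lemma_cone} to exclude configurations in which both the row and the column would be nonzero in an inconsistent way.
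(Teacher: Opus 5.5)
Your overall strategy is the paper's. You pass to the Morse--Smale flow on the orientable morsification, use that the two unstable (resp.\ stable) separatrices of each Morse saddle carry opposite signs, regroup into GGS generators, and treat the saddle cone separately. Deriving (a) from the column/row structure, instead of the paper's separate count $\#\widehat{\mathcal{M}}\leq 2$, is only a streamlining.

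\textbf{The saddle-cone step fails as planned.} Take $\mathfrak{h}(y)=\{\widetilde y,\widetilde y'\}$. You propose to show that every incoming branch $u$ has $n_{\widetilde u}=n_{\widetilde u'}$, and likewise every outgoing branch. Orientability of $\widetilde M$ cannot give this. It relates the two stable separatrices of the \emph{same} Morse saddle, but $\widetilde u$ and $\widetilde u'$ end at two \emph{different} saddles whose unstable manifolds are oriented independently. Reversing the orientation of $W^u(\widetilde y')$ flips $n_{\widetilde u'}$ and leaves $n_{\widetilde u}$ unchanged. Moreover, the argument in the proof of Lemma~\ref{lemma_cone} shows that an incoming branch and an outgoing branch cannot both have matched lifts. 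So your three bullets, taken together, contradict the very lemma you plan to invoke.

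\textbf{The fix is a dichotomy, not a uniform matching; this is item (b)(2) of the paper's proof.} Let $u,v$ be the two GGS branches on one side of $y$, lifting to $\widetilde u,\widetilde v$ at $\widetilde y$ and to $\widetilde u',\widetilde v'$ at $\widetilde y'$. The Morse fact you already have gives $n_{\widetilde v}=-n_{\widetilde u}$ and $n_{\widetilde v'}=-n_{\widetilde u'}$. Hence the lifts of $u$ agree if and only if the lifts of $v$ agree. So either $n_u=n_v=0$, or $n_u=-n_v=\pm 1$. If $u$ and $v$ come from (or go to) the same generator, the entry is $n_u+n_v=0$ in both cases. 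This gives the zero or $\{+1,-1\}$ pattern for the row of $\triangle_2$ and the column of $\triangle_1$ at $h^1_1(y)$. It needs no inspection of the explicit cone morsification and no appeal to Lemma~\ref{lemma_cone}.

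\textbf{An unstated assumption in your regrouping.} Your argument for (b) and (c) needs each Morse sink or source to be counted in exactly one GGS generator, and you should say so explicitly. Under the formulas of items (3)--(4) in the proof of Theorem~\ref{teo:complexo}, the cone or cross-cap part of a mixed singularity enters every $h^i_2(x)$. The paper's ``analogously'' in (c) passes over the same point.
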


\begin{proof}
Let $x\in \operatorname{Sing}(X)$ be any singularity and consider the associated  generators $h^i_k(x)$ in dimension $k \in \{1,2\}$. Since
\begin{align*}
    \triangle_k\langle h^i_k(x)\rangle=\sum_{y\in \operatorname{Sing}(X)}\left(\sum_{j=1}^{\eta_{k-1}(y)} n(h^i_k(x), h^j_{k-1}(y)) \cdot \langle h^j_{k-1}(y) \rangle\right),
\end{align*}
the entries of the matrix $\triangle_k$ are given by $n(h^i_k(x), h^j_{k-1}(y))$, as we vary over all generators in dimensions $k$ and $k-1$. In what follows, we prove each statement of the theorem.

\begin{enumerate}
\item[$(a)$] To prove the first item, it suffices to show that
$
    n(h^i_k(x), h^j_{k-1}(y))\in \{-1,0,1\}.
$ 
By definition, we have
\begin{align*}
    n(h^i_k(x), h^j_{k-1}(y))=
    \begin{cases}
    \displaystyle\sum_{u \in \widehat{\mathcal{M}}_{xy}}n_u,  \text{ if } x \text{ or } y \text{ is a cone type saddle}; \\
    \displaystyle\sum_{\widetilde{u} \in \widehat{\mathcal{M}}_{h^i_k(x) h^j_{k-1}(y)}} \!\!\!\!\!\!n_{\widetilde{u}},  \text{ otherwise}.  
    \end{cases}
\end{align*}
Since $\dim(M)=\dim(\widetilde{M})=2$, then
$    \#\widehat{\mathcal{M}}_{xy}\leq 2 $  and $  \#\widehat{\mathcal{M}}_{h^i_k(x) h^j_{k-1}(y)}\leq 2.
$ 
    \begin{enumerate}
        \item[$(1)$] Assume that one of the singularities $x$ or $y$ is a saddle cone. Without loss of generality, let $y$ be a saddle cone. We have two cases to analyze $\#\widehat{\mathcal{M}}_{xy}=1$ and $\#\widehat{\mathcal{M}}_{xy}=2$, that is, $\widehat{\mathcal{M}}_{xy}=\{u\}$ and $\widehat{\mathcal{M}}_{xy}=\{u,v\}$. In the first case, by definition, we have
            \begin{align*}
                n(h^i_k(x), h^j_{k-1}(y))=n_u=
                \begin{cases}
                    n_{\widetilde{u}}, \text{ if } n_{\widetilde{u}}=n_{\widetilde{u}'}\\
                    0, \text{ if } n_{\widetilde{u}}\neq n_{\widetilde{u}'}.
                \end{cases}
            \end{align*}
            Here $\widetilde{u}$ and $\widetilde{u}'$ are flow lines in the morsification manifold. Since, $n_{\widetilde{u}}, n_{\widetilde{u}'}\in \{-1,0,1\}$, the result follows. Now if $\widehat{\mathcal{M}}_{xy}=\{u,v\}$, then
            \begin{align*}
                n(h^i_k(x), h^j_{k-1}(y))=n_u+n_v.
            \end{align*}
            Let $\widetilde{u}, \widetilde{u}'$ and $\widetilde{v}, \widetilde{v}'$ be the flow lines in the morsification manifold associated with $u$ and $v$, respectively. If either $n_{\widetilde{u}}\neq n_{\widetilde{u}}'$ or $n_{\widetilde{v}}\neq n_{\widetilde{v}}'$ the result follows since $n_u=0$ or $n_v=0$. If $n_{\widetilde{u}}= n_{\widetilde{u}}'$ and $n_{\widetilde{v}}= n_{\widetilde{v}}'$ then by definition
            \begin{align*}
                n(h^i_k(x), h^j_{k-1}(y))&=n_u+n_v =n_{\widetilde{u}}+n_{\widetilde{v}}.
            \end{align*}
            Since we are in the Morse configuration, this sum equals zero. That is, $n_{\widetilde{u}}$ necessarily has the opposite sign to $n_{\widetilde{v}}$, and therefore the result holds in all cases.
            
            \item[$(2)$] Now, assume that neither $x$ nor $y$ are saddle cones. By definition we have
                \begin{align*}
                    n(h^i_k(x), h^j_{k-1}(y))&=
                    %\begin{cases}
                    %\displaystyle\sum_{u \in \widehat{\mathcal{M}}_{xy}}n_u,  \text{ if } x \text{ or } y \text{ they are cone type saddles};\\
                    %\displaystyle\sum_{\widetilde{u} \in \widehat{\mathcal{M}}_{h^i_k(x) h^j_{k-1}(y)}}n_{\widetilde{u}}, \text{ otherwise}.
                    %\end{cases}
                    %\\
                    \displaystyle\sum_{\widetilde{u} \in \widehat{\mathcal{M}}_{h^i_k(x) h^j_{k-1}(y)}}n_{\widetilde{u}}.
                \end{align*}
                
            If $\widehat{\mathcal{M}}_{h^i_k(x) h^j_{k-1}(y)}=\{\widetilde{u}\}$, then $n(h^i_k(x), h^j_{k-1}(y))=n_{\widetilde{u}}$ and the result follows. If $\widehat{\mathcal{M}}_{h^i_k(x) h^j_{k-1}(y)}=\{\widetilde{u}, \widetilde{v}\}$ and none of the singularities is of the cross-cap type, then there is a bijection of generators, and the sum corresponds to the usual Morse configuration; hence, the result follows again. If $\widehat{\mathcal{M}}_{h^i_k(x) h^j_{k-1}(y)}=\{\widetilde{u}, \widetilde{v}\}$ and one of the singularities is a cross-cap which does not admit a bijection of generators with the morsification, then, by the orientation relations established, we have that $n_{\widetilde{u}}\neq n_{\widetilde{v}}$ and thus the sum is zero.
        \end{enumerate}

Therefore, in any case $n(h^i_k(x), h^j_{k-1}(y))\in \{-1,0,1\}$.

\item[$(b)$] Given $\triangle_{1}$, the  entries of each column of this matrix are given by
\begin{align*}
    \triangle_1\langle h^i_1(x)\rangle=\sum_{y\in \operatorname{Sing}(X)}\left(\sum_{j=1}^{\eta_{0}(y)} n(h^i_1(x), h^j_{0}(y)) \cdot \langle h^j_{0}(y) \rangle\right),
\end{align*}
where $h^i_1(x)$ is any saddle generator.
\begin{enumerate}
    \item[$(1)$] If $x$ is of regular, double or triple crossing type, there is a bijection between the GGS generators and the morsification. Therefore, in the morsification manifold we have two flow lines in 
       $  W^u(h^i_1(x))\setminus \{h^i_1(x)\},$ 
    with opposite characteristic signs. Since $h^{i}_1(x)$ connects either to two generators in dimension $0$ or to one generator in dimension $0$, then:
    \begin{align*}
        \triangle_1\langle h^i_1(x) \rangle&= +1 \langle h^j_0(x_k) \rangle -1 \langle h^l_0(x_m) \rangle ,\\
        \text{ or } \ \triangle_1\langle h^i_1(x) \rangle&= (+1-1) \langle h^j_0(x_k) \rangle=0\langle h^j_0(x_k) \rangle. 
    \end{align*}
    Hence, the column associated with $h^i_1(x)$ has only two possibilities: either the column is zero or it has two non-zero entries, one positive and one negative.
    \item[$(2)$] Let $x$ be a saddle cone. In the morsification of $x$ we associate two singularities that are regular saddles, say $\widetilde{x}$ and $\widetilde{x}'$, as illustrated in Figure \ref{fig:teorema_matriz}.
    
\begin{figure}[!ht]
\centering
\begin{overpic}[unit=1mm, scale=.25, width=6.5cm]{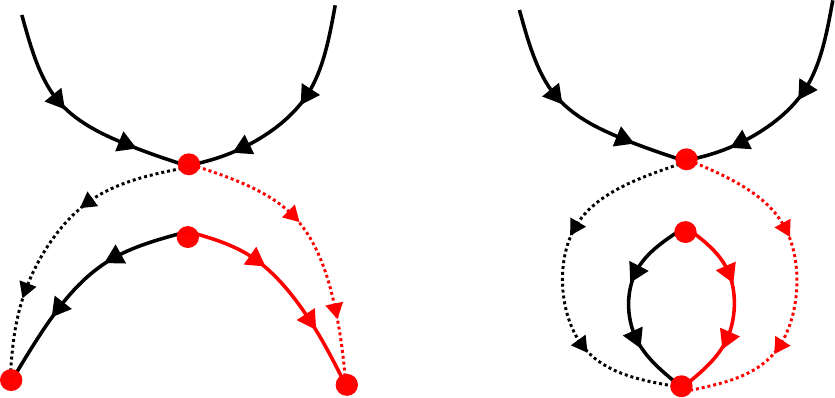}
\put(21.5,31){$\widetilde{x}$}
\put(21,13){$\widetilde{x}'$}
\put(81.5,31){$\widetilde{x}$}
\put(80,13){$\widetilde{x}'$}
\put(3,20){$\widetilde{u}$}
\put(10,8){$\widetilde{u}'$}
\put(63.5,18){$\widetilde{u}$}
\put(70,8){$\widetilde{u}'$}
\put(38.5,20){$\widetilde{v}$}
\put(30,8){$\widetilde{v}'$}
\put(96,18){$\widetilde{v}$}
\put(89,8){$\widetilde{v}'$}
\put(-4,-5){$h^j_0(x_k)$}
\put(36,-5){$h^l_0(x_m)$}
\put(75,-6){$h^j_0(x_k)$}
\end{overpic}
\vspace{0.2cm}
\caption{Morsifications of the saddle cone $x$ and its connections.}
\label{fig:teorema_matriz}
\end{figure}

If $n_u=0$ then $n_v=0$. Indeed, by definition, if $n_u=0$,  we have
    \begin{align}
    \label{eq_1_te0_6.1}
        n_{\widetilde{u}}\neq n_{\widetilde{u}'},
    \end{align}
and in the morsified manifold we also have:
    \begin{align}
        \label{eq_2_te0_6.1}n_{\widetilde{u}'}\neq n_{\widetilde{v}'}\\
        \label{eq_3_te0_6.1}n_{\widetilde{u}}\neq n_{\widetilde{v}}.
    \end{align}
From (\ref{eq_1_te0_6.1}) and (\ref{eq_2_te0_6.1}) we deduce $n_{\widetilde{u}}=n_{\widetilde{v}'}$ and from (\ref{eq_1_te0_6.1}) and (\ref{eq_3_te0_6.1}) we obtain $n_{\widetilde{u}'}=n_{\widetilde{v}}$. Thus,
    \begin{align*}
        n_{\widetilde{v}'}=n_{\widetilde{u}}\neq n_{\widetilde{u}'}=n_{\widetilde{v}}
    \end{align*}
Hence, $n_{\widetilde{v}'}\neq n_{\widetilde{v}}$ and $n_{v}=0$. Therefore, the column corresponding to $h^1_1(x)$ consists entirely of zero entries.

If $n_{u}=n_{\widetilde{u}}$ then $n_{v}=n_{\widetilde{v}}$. Indeed, by definition, we have $n_{\widetilde{u}}=n_{\widetilde{u}'}$. On the other hand, $n_{\widetilde{u}}\neq n_{\widetilde{v}}$ and $n_{\widetilde{u}'}\neq n_{\widetilde{v}'}$, so it follows that $n_{\widetilde{v}}= n_{\widetilde{v}'}$. Therefore, the column corresponding to $h^1_1(x)$  has exactly two non-zero entries with opposite signs.
    
If $h^1_1(x)$ connects with a single generator $h^j_0(x_k)$, then the column is null, since $\widetilde{x}$ and $\widetilde{x}'$ are regular saddles that have a double connection with $h^j_0(x_k)$ in the morsification manifold.

\item[$(3)$] If $x$ is a saddle cross-cap. Then $h^1_1(x)$ connects with a GGS generator $h^1_0(y)$. From the morsification of the two blocks, see Figure \ref{fig:Morsificacao_cross_cap}, and our chosen orientation, we have: $n(h^1_1(x),h^1_0(y))=+1-1=0$ and the result follows.
\end{enumerate} 
    
\item[$(c)$] Given $\triangle_{2}$. The column entries of this matrix are given by
\begin{align*}
    \triangle_2\langle h^i_2(x)\rangle=\sum_{y\in \operatorname{Sing}(X)}\left(\sum_{j=1}^{\eta_{1}(y)} n(h^i_2(x), h^j_{1}(y)) \cdot \langle h^j_{1}(y) \rangle\right),
\end{align*}
where $h^i_2(x)$ is any saddle generator. The entries of a row of $\triangle_2$ are given by the repelling generators that are mapped by $\triangle_2$ into a fixed saddle generator $h^j_1(y)$. Therefore, the proof proceeds analogously to item $(b)$.
\end{enumerate}
\end{proof}

\subsection{Examples of GGS chain complex}

We now present some examples of GGS pairs $(M,X)$  satisfying the condition $\mathcal{H}$, together with the computation of the associated GGS chain complexes defined in Definition \ref{def:GS_chain}, and the corresponding matrix $\triangle$.

\begin{example}
\label{exemplo_01}
Let $(M, X)$ be a GGS pair satisfying the condition $\mathcal{H}$ as shown on the left in the Figure \ref{fig:ex_02}.
\begin{figure}[!ht]
\centering
\begin{overpic}[ unit=1mm, scale=.25, width=11cm]{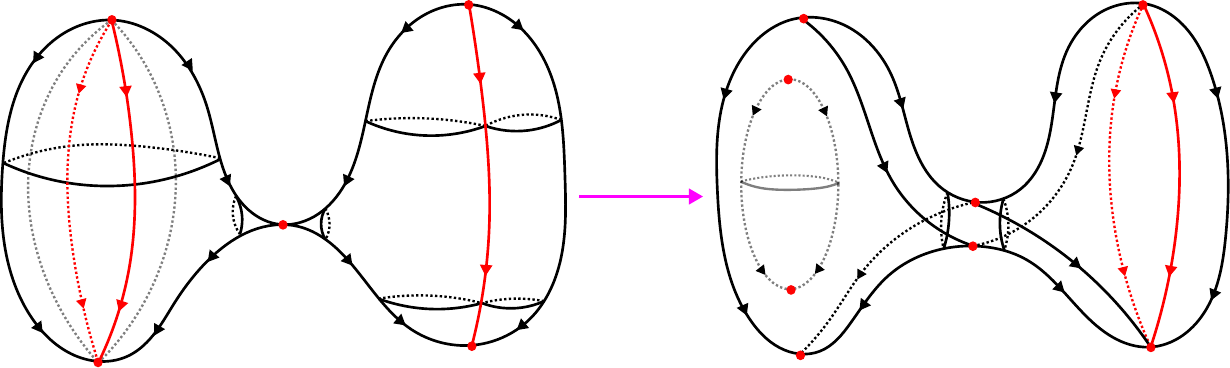}
        %pontos_no_singular
        \put(8,29){$x_1$}
        \put(37,30.5){$x_2$}
        \put(21.8,12.7){$x_3$}
        \put(6.5,-1.8){$x_4$}
        \put(37,-0.9){$x_5$}
        %\put(20,28){\footnotesize{$(M, X)$}}
        \put(51,15){$\mathfrak{h}$}
        %pontos_no_suave
        \put(62,30){$h^1_2(x_1)$}
        \put(61.2,24.6){\tiny{$h^2_2(x_1)$}}
        \put(61,-2){$h^1_0(x_4)$}
        \put(61.3,4.4){\tiny{$h^2_0(x_4)$}}
        \put(78.1,6.3){$\widetilde{x}_3'$}
        \put(78.1,15.6){$\widetilde{x}_3$}
        \put(90,31){$h^1_2(x_2)$}
        \put(90,-1.5){$h^1_0(x_5)$}
        %\put(76,28){\footnotesize{$(\widetilde{M}, \widetilde{X})$}}
\end{overpic}
\vspace{0.2cm}
\caption{GGS manifold with cone, cross-cap and double crossing singularities, and its morsification.}
\label{fig:ex_02}
\end{figure}

In this example, the GGS chain groups are given by:
\begin{align*}
     C_k(M, X; \Z)=
\begin{cases}
    \Z\langle h^1_2(x_1) \rangle \oplus \Z \langle h^2_2(x_1) \rangle \oplus \Z \langle h^1_2(x_2) \rangle, &  \text{if} \ k=2;\\
    \Z\langle h^1_1(x_3) \rangle, &  \text{if} \ k=1;\\
    \Z\langle h^1_0(x_4) \rangle \oplus \Z\langle h^2_0(x_4) \rangle \oplus \Z\langle h^1_0(x_5) \rangle, & \text{if} \ k=0.
\end{cases}
\end{align*}

The boundary maps are fully determined once their values on the generators of the GGS chain groups are specified. One has that 
\begin{align*}
    \triangle_2 \langle h^1_2(x_1) \rangle  &= n(h^1_2(x_1), h^1_{1}(x_3)) \langle h^1_{1}(x_3) \rangle; \\
    \triangle_2 \langle h^2_2(x_1) \rangle  &= n(h^2_2(x_1), h^1_{1}(x_3)) \langle h^1_{1}(x_3) \rangle; \\
    \triangle_2 \langle h^1_2(x_2) \rangle  &= n(h^1_2(x_2), h^1_{1}(x_3)) \langle h^1_{1}(x_3) \rangle; \\
    \triangle_1  \langle h^1_1(x_3) \rangle  &= n(h^1_1(x_3), h^1_{0}(x_4)) \langle h^1_{0}(x_4) \rangle +  n(h^1_1(x_3), h^2_{0}(x_4)) \langle h^2_{0}(x_4) \rangle \\
    & \ \ \ \  + n(h^1_1(x_3), h^1_{0}(x_5)) \langle h^1_{0}(x_5) \rangle.
\end{align*}
Furthermore, $\triangle_k=0$ for all $k \in \Z$ such that $k \neq 1$ and $k \neq 2$. From the  morsification process of the pair $(M, X)$ described in Theorem \ref{teo:morsificacao}, and illustrated in Figure \ref{fig:ex_02}, we have:
\begin{align*}
    n(h^1_2(x_1), h^1_1(x_3))&=+1; &  n(h^2_2(x_1), h^1_1(x_3))&=0; &  n(h^1_2(x_2), h^1_1(x_3))&=-1;\\
 n(h^1_1(x_3), h^1_0(x_4))&=0;     & n(h^1_1(x_3), h^2_0(x_4))&=0;
    &  n(h^1_1(x_3), h^1_0(x_5))&=0.  
\end{align*}
% \begin{align*}
%     n(h^1_2(x_1), h^1_1(x_3))&=+1; & n(h^1_1(x_3), h^1_0(x_4))&=0;\\
%     n(h^2_2(x_1), h^1_1(x_3))&=0;  & n(h^1_1(x_3), h^2_0(x_4))&=0;\\
%     n(h^1_2(x_2), h^1_1(x_3))&=-1; &  n(h^1_1(x_3), h^1_0(x_5))&=0.  
% \end{align*}
Thus
\begin{align*}
    \triangle_2 \langle h^1_2(x_1) \rangle &=+\langle h^1_{1}(x_3) \rangle;\qquad   \triangle_2 \langle h^2_2(x_1) \rangle = 0;\qquad 
    \triangle_2 \langle h^1_2(x_2) \rangle = - \langle h^1_{1}(x_3) \rangle;
\end{align*}
and, $\triangle_1 \equiv 0$. The associated matrix $\triangle$  is as shown in Table \ref{table_1}.

\begin{table}[!ht]
\centering
%\resizebox{\textwidth}{!}{
\begin{tabular}{|c|c|c|c|c|c|c|c|c|c|c|c|c| } 
\hline
& \cellcolor{ForestGreen!40} $h^1_0(x_4)$& \cellcolor{ForestGreen!40} $h^2_0(x_4)$ & \cellcolor{ForestGreen!40} $h^1_0(x_{5})$  & \cellcolor{ForestGreen!40} $h^1_1(x_{3})$  &  \cellcolor{ForestGreen!40} $h^1_2(x_{1})$ & \cellcolor{ForestGreen!40} $h^2_2(x_1)$ & \cellcolor{ForestGreen!40}  $h^1_2(x_{2})$ \\
\hline
\cellcolor{ForestGreen!40} $h^1_0(x_4)$ & $0$ & $0$ & $0$ & \cellcolor{WildStrawberry!40}$0$ & $0$ & $0$ & $0$   \\
\hline
\cellcolor{ForestGreen!40} $h^2_0(x_4)$ & $0$ & $0$ & $0$ & \cellcolor{WildStrawberry!40}$0$ & $0$  & $0$ & $0$  \\
\hline
\cellcolor{ForestGreen!40} $h^1_0(x_{5})$ & $0$  & $0$ & $0$ & \cellcolor{WildStrawberry!40}$0$ & $0$  & $0$ & $0$   \\
\hline
\cellcolor{ForestGreen!40} $h^1_1(x_{3})$ & $0$  & $0$ & $0$ & $0$ & \cellcolor{MidnightBlue!40}$+1$ & \cellcolor{MidnightBlue!40}$0$ & \cellcolor{MidnightBlue!40}$-1$  \\
\hline
\cellcolor{ForestGreen!40} $h^1_2(x_{1})$ & $0$  & $0$ & $0$ & $0$ & $0$ & $0$ & $0$ \\
\hline
\cellcolor{ForestGreen!40} $h^2_2(x_{1})$ & $0$  & $0$ & $0$ & $0$  & $0$ & $0$ & $0$ \\
\hline
\cellcolor{ForestGreen!40} $h^1_2(x_{2})$ & $0$ & $0$ & $0$ & $0$ & $0$ & $0$ & $0$ \\
\hline
\end{tabular}
%}
\caption{Matrix $\triangle$ associated with the $(C_*(M,X;\Z), \triangle_*)$.}
\label{table_1}
\end{table}
\end{example}

\begin{example}
\label{exemplo_02}
Let $(M, X)$ be a GGS pair satisfying the condition $\mathcal{H}$, as shown on the left in Figure \ref{fig:exemplo_final}. The GGS groups are given as follows:
\begin{align*}
    C_0(M, X; \Z) =&  \Z\langle h^1_0(x_{12}) \!\rangle\! \oplus  \Z\langle h^1_0(x_8) \rangle \!\oplus\! \Z\langle h^1_0(x_{11}) \rangle \!\oplus\!  \Z\langle h^1_0(x_7) \rangle \!\oplus \!  \Z\langle h^1_0(x_6) \rangle \!\oplus\! \Z\langle h^1_0(x_{10}) \rangle; \\
 C_1(M, X; \Z)= & \Z\langle h^1_1(x_9) \rangle \oplus \Z\langle h^1_1(x_4) \rangle \oplus \Z\langle h^1_1(x_5) \rangle \oplus \Z\langle h^2_1(x_5) \rangle \oplus \Z\langle h^1_1(x_3) \rangle ,\\
 C_2(M, X; \Z)=& \Z\langle h^1_2(x_1) \rangle \oplus \Z\langle h^1_2(x_2) \rangle \oplus \Z\langle h^2_2(x_2) \rangle.
\end{align*}

\begin{figure}[!ht]
\centering
\begin{overpic}[unit=1mm, scale=.25, width=12cm]{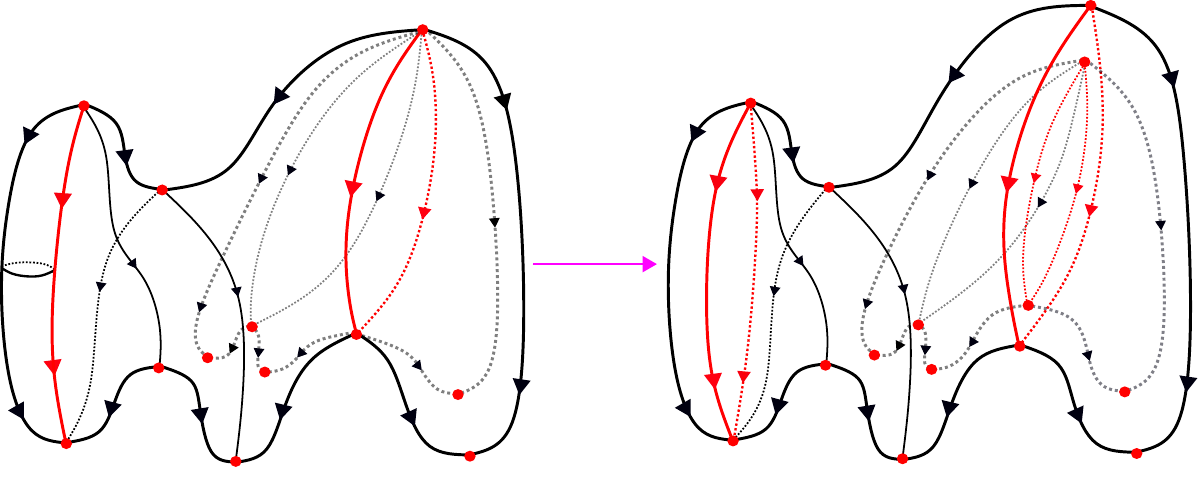}
%\put(0,36){$(M,X)$}
\put(5.5,33.5){$x_1$}
\put(34,40){$x_2$}
\put(12.5,26.5){$x_3$}
\put(22.4,13.3){\footnotesize{$x_4$}}
\put(28,11){$x_5$}
\put(16.1,9.5){\footnotesize{$x_6$}}
\put(20.5,8.2){\footnotesize{$x_7$}}
\put(37,6.2){$x_8$}
\put(11.5,8.5){$x_9$}
\put(4,2){$x_{10}$}
\put(18,0.5){$x_{11}$}
\put(36.5,1.2){$x_{12}$}
\put(49,20){$\mathfrak{h}$}
%%%%
%\put(63,36){$(\widetilde{M},\widetilde{X})$}
\put(59,34){$h^1_2(x_1)$}
\put(87.5,42.5){$h^2_2(x_2)$}
\put(89.7,37){\tiny{$h^1_2(x_2)$}}
\put(67.5,27.3){\tiny{$h^1_1(x_3)$}}
\put(76.4,15){\tiny{$h^1_1(x_4)$}}
\put(84.2,17.2){\tiny{$h^1_1(x_5)$}}
\put(81.5,9.5){\tiny{$h^2_1(x_5)$}}
\put(69,13){\tiny{$h^1_0(x_6)$}}
\put(73.2,8){\tiny{$h^1_0(x_7)$}}
\put(91,5.7){\tiny{$h^1_0(x_8)$}}
\put(61.5,11.2){\tiny{$h^1_1(x_9)$}}
\put(57.2,1.5){$h^1_0(x_{10})$}
\put(72, -0.5){$h^1_0(x_{11})$}
\put(90,0){$h^1_0(x_{12})$}
\end{overpic}
\caption{GGS manifold $M$ and its morsification $\widetilde{M}$.}
\label{fig:exemplo_final}
\end{figure}
Using the morsification depicted in Figure \ref{fig:exemplo_final} and applying the definition of the caracteristic sign, we obtain the expression of the boundary map on the generators:
\begin{align*}
    \triangle_2 \langle h^1_2(x_{1}) \rangle&=\langle h^1_1(x_3) \rangle+\langle h^1_1(x_9) \rangle,\\
    \triangle_2\langle h^2_2(x_{2}) \rangle&=-\langle h^1_1(x_3) \rangle-\langle h^1_1(x_9) \rangle,\\
    \triangle_1\langle h^1_1(x_{9}) \rangle&=\langle h^1_0(x_{10}) \rangle-\langle h^1_0(x_{11}) \rangle,\\
    \triangle_1\langle h^1_1(x_{4}) \rangle&=-\langle h^1_0(x_{6}) \rangle + \langle h^1_0(x_{7}) \rangle,\\
    \triangle_1\langle h^1_1(x_{5}) \rangle&=-\langle h^1_0(x_{7}) \rangle +\langle h^1_0(x_{8}) \rangle,\\
    \triangle_1\langle h^2_1(x_{5}) \rangle&=-\langle h^1_0(x_{11}) \rangle+\langle h^1_0(x_{12}) \rangle,\\
    \triangle_1\langle h^1_1(x_{3}) \rangle&=-\langle h^1_0(x_{10}) \rangle+\langle h^1_0(x_{11}) \rangle,
\end{align*}
and all other entries are zero. The matrix $\triangle$  the represetes the boundary operator of the GGS chain complex of $(M,X)$ is shown in Table \ref{table:1}.
\end{example}

\section{Detecting Local Homotopical Dynamical Cancellations}
\label{sec:detecting_dynamical_homotopical_cancellation}

In Section \ref{sec:collisions}, we introduced Definition \ref{definition_canc} of homotopical dynamical cancellation. Here, we present the first key feature of the GGS chain complex: it detects a relevant family of such cancellations. Beyond the properties described in Section \ref{sec:collisions}, this family has two further important characteristics: it preserves the basic singular structure of the GGS manifold and reduces the number of generators of the GGS chain complex.
 
Let $(M, X)$ be a GGS pair satisfying the condition $\mathcal{H}$. We perform a homotopical dynamical cancellation between three singularities, $x_1, x_2, x_3 \in \operatorname{Sing}(X)$, and two flow lines, $u_1$ and $u_2$, in the regular part connecting them. Furthermore two of the singularities have repelling (or attracting) nature, while the third has saddle nature. For simplicity, we say that the three singularities are cancelled, or that their natures are cancelled. It is not always possible to find three singularities that can be cancelled. 

The following theorem establishes that the chain complex defined in Section \ref{sec:complex} encodes some dynamical structure that makes it possible to detect homotopical cancellations.

\begin{theorem}
\label{teo_cancelamento}
\textit{(Homotopical Dynamical Cancellation Theorem)} Let $(M, X)$ be a GGS pair satisfying the condition $\mathcal{H}$, and let $x_1, x_2 \in \operatorname{Sing}(X)$. Suppose that their associated GGS group generators satisfy either 
\begin{align*}
n(h^j_2(x_1), h^m_1(x_2)) \neq 0\ \ \text{ or } \ \ n(h^m_1(x_2), h^j_0(x_1)) \neq 0.
\end{align*}
Then there exists $x_3 \in \operatorname{Sing}(X)$, along with connecting flows $u_1$ from $x_1$ to $x_2$ and $u_2$ from $x_3$ to $x_2$, such that the set $S=\{x_1, x_2, x_3, u_1, u_2\}$ admits a homotopical dynamical cancellation to a single GGS singularity $S'=\{x'\}$.
\end{theorem}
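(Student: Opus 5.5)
By symmetry (reversing time, i.e.\ replacing $X$ by $-X$, swaps attracting and repelling natures and exchanges $\triangle_2$ and $\triangle_1$), it suffices to treat the case $n(h^j_2(x_1), h^m_1(x_2)) \neq 0$. Here $x_1$ has repelling nature at the generator $h^j_2(x_1)$, and $x_2$ has saddle nature at $h^m_1(x_2)$. The plan has three steps: extract the flow line $u_1$, find the second repeller $x_3$ with $u_2$, and then build the collapse explicitly and check Definition~\ref{definition_canc}.

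\textbf{Step 1: the flow line $u_1$ and the singularity $x_3$.} Since the intersection number is nonzero, the relevant moduli space is nonempty. This is $\widehat{\mathcal{M}}_{xy}$ in the saddle cone case, or $\widehat{\mathcal{M}}_{h^j_2(x_1)h^m_1(x_2)}$ in the morsified manifold otherwise. Via the projection $\mathfrak{p}$ of Theorem~\ref{teo:morsificacao}, this yields a flow line $u_1 \in \widehat{\mathcal{M}}_{x_1x_2}$.

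Next I use Theorem~\ref{teo_matriz}(c). The row of $\triangle_2$ indexed by $h^m_1(x_2)$ is nonzero, so it contains exactly one $+1$ and one $-1$. Hence there is a second generator $h^{j'}_2(x_3)$ with $n(h^{j'}_2(x_3),h^m_1(x_2)) = -n(h^j_2(x_1),h^m_1(x_2))$. Projecting again gives $u_2 \in \widehat{\mathcal{M}}_{x_3x_2}$.

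I must also rule out the possibility that the two nonzero entries come from the same singular point, or that $u_1, u_2$ lie in the singular part. If $x_3 = x_1$ with $j' \neq j$, the construction still works, but it becomes a self-gluing; I would allow $x_3 = x_1$ in that case. The main worry is whether $u_1, u_2$ can be chosen in the regular part. I would argue this from the morsification catalog: connections through folds between cross-caps give intersection number $+1-1=0$ (Theorem~\ref{teo_matriz}(b)(3)), and fold connections for double/triple crossings project to the regular sheets of $\widetilde{M}$, i.e.\ to the smooth part of one sheet.

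\textbf{Step 2: the collision.} Let $S=\{x_1,x_2,x_3,u_1,u_2\}$. Because $W^u$ of a saddle in dimension $2$ has exactly two branches, $S$ is a compact arc (or a circle, if $x_3=x_1$) and an isolated invariant set, by condition $\mathcal{H}$ (no saddle connections). I take $N$ to be a regular neighborhood of $S$ built by thickening isolating blocks of $x_1,x_2,x_3$ along $u_1,u_2$, and let $A=\{x_1\}$. Contracting the arc $S$ onto $x_1$ is a deformation retraction of $S$. I would extend it to an equivalence relation on $N$ that collapses each thickened flow tube transversally compatibly with level sets of a Lyapunov function, as in \cite{2021homotopical}. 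The quotient $N'=N/\!\sim$ then keeps $\partial N' \cong \partial N$.

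The new flow $\varphi'$ on $N'$ is defined as a repeller at the image point $x'$. The sheets of $x_1$, of $x_3$, and the unstable directions of $x_2$ other than those along $u_1,u_2$ all emanate from $x'$. Since $u_1,u_2$ lie in the regular part, the local model at $x'$ is a wedge or mixed region from Definition~\ref{def:GSmix}, obtained by combining the regions of $x_1$, $x_2$, $x_3$. For example, cone $\vee$ local regions give $\mathcal{C}_n\vee\mathcal{Q}$, and regular points give $\mathcal{C}_2$, as in Figure~\ref{fig_intr_01}. Hence $x'$ is a GGS singularity, and $\varphi'$ agrees with $\varphi$ off $N$.

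Next I check that the quotient map is open and that the relation is closed, so Proposition~\ref{prop_hausdorff} gives Hausdorffness of $M'$. Then Proposition~\ref{prop_conley} confirms the index is preserved, which serves as a consistency check on the nature assigned to $x'$.

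\textbf{Main obstacle.} I expect the hardest step to be identifying the local type of $x'$ and verifying that it lies in the catalog. This matters especially when $x_2$ is a saddle cone, saddle cross-cap, or mixed saddle, since then the unstable branches of $x_2$ not on $u_1,u_2$ carry singular sheets that must be attached at $x'$. I would proceed case by case on the type of $x_2$, using the nature conventions of Section~\ref{sec:GS}. In each case I would show that the resulting region is obtained by finitely many operations of Definition~\ref{def:GSmix}, appealing to the pictures in \cite{2021homotopical} for the basic same-type cases.
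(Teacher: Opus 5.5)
Your route is the same as the paper's. You apply Theorem~\ref{teo_matriz}(c) to the row of $h^m_1(x_2)$ to get a second repelling generator, push the morsified orbits down by $\mathfrak p$, and collapse the arc $x_3\,u_2\,x_2\,u_1\,x_1$ to a point.

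\textbf{Main gap: the case $x_3=x_1$.} You say the construction ``still works'' there as a self-gluing, but it does not. In that case $S=\{x_1\}\cup u_1\cup\{x_2\}\cup u_2$ is a simple closed curve in $M$, as you note yourself in Step~2. Your next claim, that contracting $S$ onto $x_1$ is a deformation retraction, is then false: a circle deformation retracts neither onto a point nor onto any proper subset. So no collision of the kind in Definition~\ref{definition_canc} collapses $S$ to a single singularity. Collapsing it anyway changes the homotopy type: for a circle $S$ one gets $\chi(M/S)=\chi(M)+1$, whereas for an arc $\chi(M/S)=\chi(M)$. The equality $h(S)=h(S')$ you planned to use as a check fails for the same reason. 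The paper's proof needs $x_1\neq x_3$ at exactly this point (``since $x_1 \neq x_3$, the path $\delta$ is not a closed loop and is therefore contractible''). You are right that (c) by itself only gives a different generator, not a different singularity. The fix, however, is an argument that the two nonzero entries of that row belong to distinct singularities, or an explicit exclusion of that configuration. Extending the collapse to a loop does not work.

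\textbf{Step~2 misassigns the nature of $x'$.} You make $x'$ a repeller. In the paper, $x'$ carries the union of the natures of $x_1,x_2,x_3$ with only the two cancelled generators removed (Corollary~\ref{corollary_nature_number}). Any other generator survives, e.g.\ a second saddle generator when $\eta_1(x_2)>1$ (Remark~\ref{obs_naturezas_canceladas}; compare the saddle--attracting $x_5'$ in Example~\ref{ex_cancelamento}). In such cases a repeller is ruled out by your own check. Take $x_1,x_3$ regular repellers and $x_2$ a double crossing of nature $ss_u$. Then $N$ is two discs glued along a common chord, and its exit set is $\partial D_1$ plus two whiskers, which is homotopy equivalent to a circle. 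Hence $CH_*(S)\cong\mathbb{Z}$, concentrated in degree $2$, while a repelling $\mathcal{D}_2$-point has $CH_2\cong\mathbb{Z}^3$.

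\textbf{Regular-part justification.} Your reason why $u_1,u_2$ lie in the regular part fails for double and triple crossings. There a fold orbit $u$ lifts to $\widetilde u^1,\widetilde u^2$ over different generators, with $\mathfrak p(\widetilde u^j)=u\subset\mathcal{SP}(M)$. Unlike the cross-cap case, nothing cancels, so a fold connection such as $r\to ss_s$ can give $n(h^j_2(x_1),h^m_1(x_2))\neq 0$. The paper does not settle this either; it simply sets $u_i=\mathfrak p(\widetilde u_i)$. Your identification of the local model at $x'$ depends on it, though, so you must either treat fold collapses or assume regularity of $u_1,u_2$.
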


\begin{proof}
If $n(h^j_2(x_1), h^m_1(x_2))\neq 0$ then, by item $(c)$ of Theorem \ref{teo_matriz}, there exists another singularity $x_3\neq x_1$ with $$n(h^l_2(x_3), h^m_1(x_2))\neq 0.$$ Note that, if $x_2$ is not a cone of saddle nature, there is a  bijection between the GGS generators and the singularities in the morsification. In this case, $h^m_1(x_2)=\{\widetilde{x}_2\}$. On the other hand, if $x_2$ is a saddle cone singularity, then $h^m_1(x_2)=\{\widetilde{x}_2, \widetilde{x}'_2\}$ and all flow lines connecting to either  $\widetilde{x}_2$ or $\widetilde{x}'_2$ lie in the same connected component. Therefore, in either case, the generators $h^j_2(x_1), h^m_1(x_2), h^l_2(x_3)$ belong to the same connected component of the morsified manifold.

Since $n(h^j_2(x_1),h^m_1(x_2))\neq 0$ and $n(h^l_2(x_3),h^m_1(x_2))\neq 0$ it follows that there exist connecting orbits
\begin{align*}
    \widetilde{u}_1 \in \widehat{\mathcal{M}}_{h^j_2(x_1)h^m_1(x_2)} \ \ & \text{ and } \ \ \widetilde{u}_2 \in \widehat{\mathcal{M}}_{h^l_2(x_3)h^m_1(x_2)},
\end{align*}
and let $u_1\in \mathcal{M}_{x_1x_2}$ and $u_2\in \mathcal{M}_{x_3x_2}$ be the flow lines associated to $\widetilde{u}_1$ and $\widetilde{u}_2$ via morsification, that is, $u_1=\mathfrak{p}(\widetilde{u}_1)$ and $u_2=\mathfrak{p}(\widetilde{u}_2)$.

Denote by $S$ the subset of $M$ formed by the singularities $x_1$, $x_2$, $x_3$ together with the flow lines $u_1$ and $u_2$. Observe that $S$ is an isolated invariant set. Let $N$ be an isolating block for $S$ such that the vector field $X$ is transverse to the boundary  $\partial N$.

Let $\gamma(x)$ denote the orbit through a point $x \in M$. Clearly, $\gamma(x_i)=x_i$ for $i=1,2,3$. Define the path $\delta$  by concatenating the following  flow lines:
\begin{align*}
    \delta \coloneqq \gamma(x_3) * \gamma(u_2) * \gamma(x_2) * \gamma^{-1}(u_1) * \gamma(x_1).
\end{align*}
Thus, $\delta$ is a path starting at $x_3$ and ending at $x_1$, passing through the flow lines $u_2$ and $u_1$. Since $x_1 \neq x_3$, the path $\delta$ is not a closed loop and is therefore contractible to a point $x'$. This contraction induces a deformation of $N$ into a new neighborhood $N'$ such that $\partial N' = \partial N$ and $N'$ is homotopy equivalent to $N$.

Define $M'=(M\setminus N)\sqcup_{id} N'$. Moreover, we introduce a vector field on $N'$ which is transversal to the boundary $\partial N'$ and has a unique singularity at $x'$. The nature of this singularity is given by the natures of $x_1$, $x_2$, and $x_3$, excluding the contributions from the generators $h^m_1(x_2)$ and $h^l_2(x_3)$. Therefore, $S$ can be dynamically homotopically cancelled to a single GGS singularity $x'$. An analogous argument holds for the case involving the generators $h^j_0(x_1), h^m_1(x_2),$ and $h^l_0(x_3)$.
\end{proof}

The following corollary is a direct consequence of the construction of the vector field $X'$ in the proof of Theorem \ref{teo_cancelamento}.

\begin{corollary}
\label{corollary_nature_number}
Let $(M, X)$ be a GGS pair satisfying the condition $\mathcal{H}$. If $x_1, x_2, x_3 \in \operatorname{Sing}(X)$ and $h^j_2(x_1)$, $h^l_1(x_2)$, $h^m_2(x_3)$ or $h^j_0(x_1)$, $h^l_1(x_2)$, $h^m_0(x_3)$ are homotopically dynamically cancelled by Theorem \ref{teo_cancelamento}, then for resulting singularity $x'$ it holds the following equality
\begin{align*}
    \sum_{k=0}^2\eta_k(x')&=\left( \sum_{k=0}^2\eta_k(x_1)+\sum_{k=0}^2\eta_k(x_2)+\sum_{k=0}^2\eta_k(x_3)\right)-2.
\end{align*}
Furthermore, if $h^j_2(x_1)$, $h^l_1(x_2)$, $h^m_2(x_3)$ are cancelled, then
\begin{align*}
\begin{cases}
\eta_2(x')=&\eta_2(x_1)+\eta_2(x_2)+\eta_2(x_3)-1,\\
\eta_1(x')=&\eta_1(x_1)+\eta_1(x_2)+\eta_1(x_3)-1,\\
\eta_0(x')=&\eta_0(x_1)+\eta_0(x_2)+\eta_0(x_3).\\
\end{cases}
\end{align*}
And if $h^j_0(x_1)$, $h^l_1(x_2)$, $h^m_0(x_3)$ are cancelled, then
\begin{align*}
\begin{cases}
\eta_2(x')=&\eta_2(x_1)+\eta_2(x_2)+\eta_2(x_3),\\
\eta_1(x')=&\eta_1(x_1)+\eta_1(x_2)+\eta_1(x_3)-1,\\
\eta_0(x')=&\eta_0(x_1)+\eta_0(x_2)+\eta_0(x_3)-1.\\
\end{cases}
\end{align*}
\end{corollary}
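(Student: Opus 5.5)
The plan is to read both identities off the construction of the vector field on $N'$ in the proof of Theorem~\ref{teo_cancelamento}. There, the nature of the new singularity $x'$ is defined as the union of the natures of $x_1$, $x_2$, $x_3$, with the contributions of the two generators carrying the cancelled flow lines removed. So I will first write the nature words of $x_1$, $x_2$, $x_3$ as $a^{i_t}s^{j_t}r^{l_t}$ for $t=1,2,3$. By Definition~\ref{def_nature_number}, the union of natures (concatenation of words) is additive in each $\eta_k$. Hence, before removal, the word associated with $x'$ has $\eta_k$ equal to $\eta_k(x_1)+\eta_k(x_2)+\eta_k(x_3)$ for each $k=0,1,2$.

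Next I treat the repelling case, in which $h^j_2(x_1)$, $h^l_1(x_2)$, $h^m_2(x_3)$ are cancelled. The contraction of the path $\delta=\gamma(x_3)*\gamma(u_2)*\gamma(x_2)*\gamma^{-1}(u_1)*\gamma(x_1)$ removes exactly one saddle letter, namely the one corresponding to $h^l_1(x_2)$, and one repelling letter, namely the one corresponding to $h^m_2(x_3)$. The generator $h^j_2(x_1)$ survives as the repelling letter of the merged sheet. This gives $\eta_2(x')=\sum_t\eta_2(x_t)-1$, $\eta_1(x')=\sum_t\eta_1(x_t)-1$, and $\eta_0(x')=\sum_t\eta_0(x_t)$. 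The attracting case is symmetric, with the letter of $h^m_0(x_3)$ removed in place of $h^m_2(x_3)$. In either case, summing over $k$ gives the first formula: the total number of letters drops by exactly $2$.

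The main obstacle is justifying that the removal is exactly one letter per cancelled generator, and that no other letter is affected. This matters most when a generator does not correspond bijectively to morsified critical points, as for cross-cap or cone singularities. It also matters for mixed types, where the conventions of Section~\ref{sec:GS} assign natures to $\mathcal{P}\vee\mathcal{Q}$ and $\mathcal{PQ}$ regions. To handle this, I would check that each generator $h^i_k(\cdot)$ contributes exactly one letter of index $k$ to the nature word. For basic types this holds by definition of the generating set~(\ref{eq_geradores}), and for mixed types it holds by the nature conventions. Since Theorem~\ref{teo_cancelamento} builds the nature of $x'$ letter-by-letter from these generators, the identities follow directly.
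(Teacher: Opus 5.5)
Your proposal is correct and is essentially the paper's argument. The paper gives no separate proof: the corollary is read off from the proof of Theorem~\ref{teo_cancelamento}, where the nature of $x'$ is declared to be the union of the natures of $x_1,x_2,x_3$ minus the letters of $h^l_1(x_2)$ and $h^m_2(x_3)$ (resp.\ $h^m_0(x_3)$), and additivity of each $\eta_k$ under concatenation then gives all the identities. One caution: the result rests only on that stipulation, not on the mixed-type conventions of Section~\ref{sec:GS} that you invoke at the end, and those conventions can disagree with it. For instance, a saddle cone joining $\mathcal{D}_n$ and $\mathcal{D}_m$ repellers yields, by Proposition~\ref{prop_herdeiros}, a $\mathcal{D}_n\vee\mathcal{D}_m$ point, to which the union convention assigns $r^{n+m}$ rather than $r^{n+m-1}$. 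So you should cite the construction, not the conventions.
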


\begin{remark}
\label{obs_naturezas_canceladas}
The singularity obtained via a cancellation is not necessarily of repelling nature. In the case where $x_2$ is a singularity with $\eta_1(x_2)> 1$, only one of its associated saddle generators is cancelled, while another remains in the resulting singularity $x'$. An analogous situation occurs in the attracting case. For instance, in Example \ref{ex_cancelamento}, during the fourth cancellation, the generators  $h^1_0(x'_6)$, $h^2_1(x_5)$, $h^1_0(x_8)$ are cancelled, resulting in the GGS singularity $x'_5$, which is of  double type and has saddle attracting nature. This outcome arises precisely because $x_5$ is a double singularity of saddle-saddle nature.
\end{remark}

\begin{remark}
\label{obs_selas_misturadas}
In Example \ref{ex_cancelamento}, during the fourth cancellation, we could alternatively apply Theorem \ref{teo_cancelamento} to cancel the generators $h^1_0(x_{10})$, $h^1_1(x_5)$, and $h^1_0(x_{12})$ instead of $h^1_0(x'_6)$, $h^2_1(x_5)$, and $h^1_0(x_8)$. Following the argument in the proof of Theorem \ref{teo_cancelamento}, this would result in a new singularity $x'_5$ of type $\mathcal{W}_2\mathcal{D}_2$  with saddle-attracting nature. For simplicity, we do not include such vector fields in the class considered in this paper. In general, these mixed saddle  may arise in the presence of double-crossing singularities of nature $ss_u$ (resp. $ss_s$) and triple-crossing singularities of nature $ssa$ (resp. $ssr$). Nevertheless, all definitions and results developed in this work, including the GGS chain complex, can be suitably extended to incorporate these cases. Furthermore, mixed saddle vector fields can be immediately canceled, yielding a GGS manifold.
\end{remark}

\begin{definition}
Define the \textit{singular number} $\#s(p)$, of a singularity $p$ as the number of folds plus the number of cone sheets minus one, computed in an isolating neighborhood of the singularity $p$. When convenient, we use the notation $\#s(\mathcal{P})$, for a singularity $p \in M(\mathcal{P})$.
\end{definition}

\begin{example}
We present below a table with the singular numbers of the GGS singularities.
\begin{center}
\begin{tabular}{|c|c|c|c|}
\hline
\cellcolor{ForestGreen!40} \textit{\ Type \ } & \cellcolor{ForestGreen!40} \textit{ \ $\#s(\mathcal{P})$ \ } & \cellcolor{ForestGreen!40} \textit{ \ Type \ } & \cellcolor{ForestGreen!40} \textit{ \ $\#s(\mathcal{P})$ \ } \\
\hline
\cellcolor{ForestGreen!40} $\mathcal{R}$ & $0$ &  \cellcolor{ForestGreen!40} $\mathcal{C}_n$ & $n-1$\\
\hline
\cellcolor{ForestGreen!40} $\mathcal{D}_{n}$ & $2(n-1)$ &  \cellcolor{ForestGreen!40} $\mathcal{D}_{n}\mathcal{T}_{2k+1}$ & $2(n-1)+6k$\\
\hline
\cellcolor{ForestGreen!40} $\mathcal{W}_{n}$ & $n-1$ & \cellcolor{ForestGreen!40} $\mathcal{P}\vee\mathcal{Q}$ & $\#s(\mathcal{P})+\#s(\mathcal{Q})+1$\\
\hline
\cellcolor{ForestGreen!40} $\mathcal{T}_{2k+1}$ & $6k$ & \cellcolor{ForestGreen!40} $\mathcal{W}_n\mathcal{Q}$ & $(n-1)+\#s(\mathcal{Q})$\\
\hline
\end{tabular}
\end{center}
\end{example}

The next theorem shows that the cancellations described in Theorem \ref{teo_cancelamento} simplify the dynamics without losing fundamental singular properties such as folds and cone sheets.

\begin{proposition}
\label{prop_herdeiros}
\textit{(Succession Proposition)} Let $(M, X)$ be a GGS pair satisfying the condition $\mathcal{H}$. If $x_1, x_2, x_3 \in \operatorname{Sing}(X)$ and $h^j_2(x_1)$, $h^l_1(x_2)$, $h^m_2(x_3)$ (resp. $h^j_0(x_1)$, $h^l_1(x_2)$, $h^m_0(x_3)$) are homotopically dynamically cancelled by Theorem \ref{teo_cancelamento}, resulting in the singularity $x'$, then:
\begin{align*}
\#s(x')=\#s(x_1)+\#s(x_2) +\#s(x_3).
\end{align*}
\end{proposition}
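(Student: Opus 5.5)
The plan is to compute the singular type of $x'$ directly from the construction in the proof of Theorem \ref{teo_cancelamento}, and then to check that the singular number is additive. In that proof the path $\delta=\gamma(x_3)*\gamma(u_2)*\gamma(x_2)*\gamma^{-1}(u_1)*\gamma(x_1)$ lies in the regular part except at its three endpoints and junctions. Contracting $\delta$ to the point $x'$ therefore identifies the three points $x_1,x_2,x_3$ to one point. Away from $\delta$, the isolating block $N$ is unchanged, so every fold and every cone sheet meeting a small neighbourhood of some $x_i$ survives in $N'$ and now emanates from $x'$. No new fold is created, since $u_1,u_2\subset M(\mathcal{R})$. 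So the first step is to show that a neighbourhood of $x'$ in $N'$ is the space obtained from the three local models $\mathcal{P}_1,\mathcal{P}_2,\mathcal{P}_3$ (with $x_i\in M(\mathcal{P}_i)$) by gluing them along the regular arcs $u_1,u_2$ and collapsing these arcs. Up to homeomorphism, this is the iterated wedge $\mathcal{P}_1\vee\mathcal{P}_2\vee\mathcal{P}_3$ at the distinguished points.

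For a regular point the wedge has to be handled with the convention $\mathcal{R}\mathcal{Q}=\mathcal{Q}$: wedging with a regular disk along a collapsed smooth arc only merges sheets and does not add a sheet. Here I need to be careful. Collapsing an arc joining two smooth sheets of different regions produces one extra cone-type identification of disks at $x'$. This is exactly where the ``$+1$'' in $\#s(\mathcal{P}\vee\mathcal{Q})=\#s(\mathcal{P})+\#s(\mathcal{Q})+1$ comes from, since one more sheet now passes through the point. The bookkeeping I will carry out is as follows.
\begin{align*}
\#s(x')=\#\{\text{folds at }x'\}+\#\{\text{sheets at }x'\}-1 .
\end{align*}
Folds are additive, because no fold is created or destroyed. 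For sheets, the contraction of $\delta$ identifies the local sheet collections of $x_1,x_2,x_3$. However, the sheet of $x_2$ containing $u_1$ and the sheet containing $u_2$ are glued to the corresponding sheets of $x_1$ and $x_3$ along the collapsed arcs, which removes exactly two sheet counts. Since each $\#s(x_i)$ carries its own $-1$, the totals should balance:
\begin{align*}
\sum_{i=1}^3(\text{folds}_i+\text{sheets}_i)-2-1=\sum_{i=1}^3\#s(x_i).
\end{align*}
I will verify this identity against the table of singular numbers case by case. The cases are: all three regular, giving $x'$ regular with $\#s=0$; one singular point and two regular ones; and genuinely mixed configurations. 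I will use the formulas for $\mathcal{P}\vee\mathcal{Q}$ and $\mathcal{W}_n\mathcal{Q}$ to see that the resulting type lies in the allowed list and that its tabulated singular number agrees.

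The attracting case, with generators $h^j_0(x_1),h^l_1(x_2),h^m_0(x_3)$, follows by reversing time, since $\#s$ depends only on the topology and not on the flow.

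The main obstacle is the sheet count: making precise that contracting a regular arc between two singular points adds exactly one ``sheet identification'' and nothing else. I would first settle this in the simplest model, two regular disks joined by an arc. There the collapse yields the two-sheet cone $\mathcal{C}_2$, and $\#s=1$ agrees with $\#s(\mathcal{R}\vee\mathcal{R})=0+0+1$. I would then argue that the general case is the same local operation performed away from the folds, so the count is local and additive. If the regular-point convention makes the count come out off by one in the all-regular or mixed cases, I will resolve it by tracking which $x_i$ are regular, since for those the wedge should be interpreted via $\mathcal{R}\mathcal{Q}=\mathcal{Q}$.
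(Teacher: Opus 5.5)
Your sheet count in the middle paragraph (folds additive, sheets $s_1+s_2+s_3-2$) is the right mechanism. However, two other parts of your plan assert the opposite, and both are false: the local model you intend to establish first, and the lemma you single out as the main obstacle.

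\textbf{The wedge model is wrong.} A neighbourhood of $x'$ is not the iterated wedge $\mathcal{P}_1\vee\mathcal{P}_2\vee\mathcal{P}_3$. That wedge keeps all $s_1+s_2+s_3$ sheets, and by the table it has $\#s=\#s(x_1)+\#s(x_2)+\#s(x_3)+2$. So it is off by exactly $2$ in \emph{every} configuration. Your own first case shows this: cancelling two regular repellers and a regular saddle gives a regular point ($\#s=0$), not $\mathcal{C}_3$ ($\#s=2$).

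\textbf{The $\mathcal{C}_2$ model is wrong.} Collapsing $u_1$ between two regular points does not produce $\mathcal{C}_2$. Since $u_1\subset M(\mathcal{R})$, the disks around $x_1$ and $x_2$ are joined by a strip around $u_1$, so their union is a disk, and collapsing a tame arc in a disk gives a disk. You would get $\mathcal{C}_2$ only if the two disks met the arc solely at their centres, which is not the situation inside $N$.

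\textbf{The repair via $\mathcal{R}\mathcal{Q}=\mathcal{Q}$ cannot work.} The error is not caused by regular points. Take $x_1\in M(\mathcal{W}_2)$, $x_2$ a regular saddle and $x_3\in M(\mathcal{D}_2)$. The actual result is $\mathcal{W}_2\mathcal{D}_2$ with $\#s=3$, while even after discarding the regular factor the wedge $\mathcal{W}_2\vee\mathcal{D}_2$ has $\#s=4$. When no $x_i$ is regular there is nothing to discard at all.

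\textbf{The lemma you need is a fusion lemma.} Near $x_i$ the neighbourhood is the cone on its link $L_i$. Because $u_1\subset M(\mathcal{R})$, the germ of $u_1$ at $x_1$ (resp.\ $x_2$) corresponds to a \emph{regular} point $a\in L_1$ (resp.\ $b\in L_2$). Collapsing $u_1$ turns the neighbourhood of $x_1$, the strip around $u_1$, and the neighbourhood of $x_2$ into the cone on the band sum of $L_1$ and $L_2$ at $a,b$. This merges the component of $L_1$ through $a$ with the component of $L_2$ through $b$, giving one sheet fewer. It leaves every fold point of the links untouched, since the band is attached at regular points.

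Applying this to $u_1$ and $u_2$ gives exactly your count: folds additive and sheets $s_1+s_2+s_3-2$, hence $\#s(x')=\sum_i\#s(x_i)$. Wedge points occur only among the unfused sheets. Examples are $\mathcal{C}_{n-1}\vee\mathcal{Q}$ when $x_1\in M(\mathcal{C}_n)$, and $\mathcal{P}\vee\mathcal{Q}$ when $x_2$ is a saddle cone and $u_1,u_2$ lie on different nappes.

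\textbf{Comparison with the paper.} With the fusion lemma in place of the wedge step, your route genuinely differs from the paper's. The paper runs through the possible types of $x_2$ (regular, $\mathcal{C}_2$, $\mathcal{D}_2$, $\mathcal{W}_2$, $\mathcal{T}_3$), names the resulting region ($\mathcal{P}\mathcal{Q}$, $\mathcal{C}_{n-1}\vee\mathcal{Q}$, $\mathcal{P}\vee\mathcal{Q}$, $\mathcal{P}\mathcal{D}_2\mathcal{Q}$, \dots), and reads $\#s$ off the table, declaring the mixed cases analogous. Your link count is uniform. It also covers the cases the paper leaves as ``analogous'', such as mixed $x_1,x_3$ or two cones, without having to identify the new singular type.
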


\begin{proof}
By hypothesis $h^j_2(x_1)$, $h^l_1(x_2)$, $h^m_2(x_3)$ are canceled by Theorem \ref{teo_cancelamento}. Keeping the notation from  the proof of Theorem \ref{teo_cancelamento}, we have:
\begin{align*}
    \delta = \gamma(x_3) * \gamma(u_2) * \gamma(x_2) * \gamma^{-1}(u_1) * \gamma(x_1),
\end{align*}
which is the path connecting $x_1$, $x_2$ and $x_3$, passing through the flow lines $u_1\in \mathcal{M}_{x_1x_2}$ and $u_2\in \mathcal{M}_{x_3x_2}$. Then the singularity $x'$ is obtained by homotopically deforming $\delta$ to a single point. Suppose that $x_1$ and $x_3$ are arbitrary basic singularities. The proof of the equality proceeds by considering the different types of $x_2$:

\begin{enumerate}
\item If $x_2$ is a regular saddle, $x_1\in M(\mathcal{P})$ and $x_3\in M(\mathcal{Q})$, where $\mathcal{P}, \mathcal{Q}\in\{ \mathcal{D}_n, \mathcal{W}_n, \mathcal{T}_{2k+1}, \mathcal{R}$\}, then, by homotopically deforming $\delta$ to $x'$, we have that $x'\in M(\mathcal{PQ})$ when $\mathcal{P}\neq \mathcal{Q}$ (the case $\mathcal{P}=\mathcal{Q}$ is treated in \cite{2021homotopical}). Thus,  
    \begin{align*}
        \#s(x')&= \#s(\mathcal{P}) + \#s(\mathcal{Q})\\
        &= \#s(x_1) + 0 + \#s(x_3)\\
        &= \#s(x_1) + \#s(x_2) + \#s(x_3).
    \end{align*}
    
\item If $x_2$ is a regular saddle, $x_1\in M(\mathcal{C}_n)$ and $x_2\in M(\mathcal{Q})$, where $\mathcal{Q}\in\{\mathcal{D}_m$, $\mathcal{W}_m$, $\mathcal{T}_{2k+1}\}$, then, $u_1\in\mathcal{M}_{x_1x_2}$ is a flow line connecting one of the cone sheets of $x_1\in M(\mathcal{C}_n)$ to the saddle $x_2$. Upon deforming $\delta$ to $x'$ this cone sheet  is identified with a sheet of $\mathcal{Q}$, resulting in $x' \in M(\mathcal{C}_{n-1}\vee\mathcal{Q})$. Then,
\begin{align*}
    \#s(x')&= \#s(\mathcal{C}_{n-1}) + 1+ \#s(\mathcal{Q})\ \text{\ \ \ ($+1$ comes from the wedge sum)}\\
    &= (n-2)+ 1 + \#s(x_3)\\
    &= (n-1)+ 0 + \#s(x_3)\\
    &= \#s(x_1) + \#s(x_2) +\#s(x_3).
\end{align*}

\item Let $x_2$ be a saddle of the type $\mathcal{C}_2$, $x_1\in M(\mathcal{P})$ and $x_3\in M(\mathcal{Q})$, where $\mathcal{P}, \mathcal{Q}\in\{\mathcal{C}_n, \mathcal{D}_n, \mathcal{W}_n, \mathcal{T}_{2k+1}, \mathcal{R}\}$. Then deforming $\delta$ to a point yields a singularity $x'\in M(\mathcal{P\vee Q})$, since $x_2$ is a cone type saddle. Thus:
\begin{align*}
    \#s(x')&= \#s(\mathcal{P}) + 1 + \#s(\mathcal{Q})\ \text{\ \ \ ($+1$ comes from the wedge sum)}\\
    &= \#s(x_1) + \#s(x_2) + \#s(x_3).
\end{align*}

\item Let $x_2$ be a saddle of the type $\mathcal{D}_2$, $x_1\in M(\mathcal{P})$ and $x_3\in M(\mathcal{Q})$, where $\mathcal{P}, \mathcal{Q}\in\{\mathcal{C}_n, \mathcal{D}_n, \mathcal{W}_n, \mathcal{T}_{2k+1}, \mathcal{R}\}$. Then, the resulting singularity satisfies $x'\in M(\mathcal{P}\mathcal{D}_2\mathcal{Q})$ and:
\begin{align*}
    \#s(x')&= \#s(\mathcal{P}) + 2 + \#s(\mathcal{Q}) \ \text{\ \ ($+2$ comes from the double fold)}\\
    &= \#s(x_1) + \#s(x_2) + \#s(x_3).
\end{align*}

\item  Let $x_2$ be a saddle of the type $\mathcal{W}_2$, $x_1\in M(\mathcal{P})$ and $x_3\in M(\mathcal{Q})$, where $\mathcal{P},\mathcal{Q}\in\{\mathcal{C}_n, \mathcal{D}_n, \mathcal{W}_n, \mathcal{T}_{2k+1}, \mathcal{R}\}$. Then the resulting singularity satisfies $x'\in M(\mathcal{P}\mathcal{W}_2\mathcal{Q})$ and we have:
\begin{align*}
    \#s(x')&= \#s(\mathcal{P}) + 1 + \#s(\mathcal{Q}) \text{ \ \ ($+1$ comes from the cross-cap fold)}\\
    &= \#s(x_1) + \#s(x_2) + \#s(x_3).
\end{align*}

\item  Let $x_2$ be a saddle of the type $\mathcal{T}_{3}$, $x_1\in M(\mathcal{P})$ and $x_3\in M(\mathcal{Q})$, where $\mathcal{P}, \mathcal{Q}\in\{\mathcal{C}_n, \mathcal{D}_n, \mathcal{W}_n, \mathcal{T}_{2k+1}, \mathcal{R}\}$. Then the resulting singularity satisfies  $x'\in M(\mathcal{P}\mathcal{T}_{2k+1}\mathcal{Q})$ and we have:
    \begin{align*}
    \#s(x')&= \#s(\mathcal{P}) + 6 + \#s(\mathcal{Q}) \text{\ \ ($+6$ comes from the triple folds)}\\
    &= \#s(x_1) + \#s(x_2) + \#s(x_3).
    \end{align*}
\end{enumerate}

\begin{comment}
Now, if $x_1$ and $x_3$ are mixed singularities the proof is analogous, it suffices to consider all the saddle possibilities as we did above. Therefore, given any $x_1, x_2, x_3\in \operatorname{Sing}(X)$ such that $h^j_2(x_1)$, $h^l_1(x_2)$, $h^m_2(x_3)$ (resp. $h^j_0(x_1), h^l_1(x_2), h^m_0(x_3)$) are dynamically homotopically cancelled, then:    
\end{comment}

Now, if $x_1$ and $x_3$ are mixed singularities the proof proceeds analogously; it suffices to consider all saddle configurations as done above. Therefore, given any $x_1, x_2, x_3\in \operatorname{Sing}(X)$ such that $h^j_2(x_1)$, $h^l_1(x_2)$, $h^m_2(x_3)$ (resp. $h^j_0(x_1), h^l_1(x_2), h^m_0(x_3)$) are dynamically homotopically cancelled  by Theorem \ref{teo_cancelamento}, we obtain:
\begin{align*}
    \#s(x')=\#s(x_1)+\#s(x_2)+\#s(x_3).
\end{align*}
\end{proof}

\section{Spectral Sequence Approach to Global Homotopical Dynamical Cancellations}
\label{sec:SSAproach_cancellation}

A relevant application of the GGS chain complex, introduced in Section \ref{sec:complex}, is the derivation of homotopical cancellation theorem from its associated spectral sequence. The purpose of this section is to present this application and the resulting theorem.  

\subsection{Basics concepts}

Jean Leray introduced spectral sequences as a tool for computing homology and cohomology in his seminal works \cite{leray1946lanneau, leray1946structure, leray1950anneau}. The wide applicability and strength of the theory were subsequently showcased in the foundational contributions of Jean-Pierre Serre \cite{serre1951homologie, serre1953groupes}. A didactic presentation of Serre's methods can be found in \cite[Chapter~9]{Sp}. More recently, spectral sequences have been a key tool for extracting dynamical information, as demonstrated in several works \cite{BLMdRS, bertolim2017algebraic, LMdRS, 2021homotopical}. 

The formal definition is as follows.

\begin{definition}
Let $R$ be a principal ideal domain. A $k$-\textit{spectral sequence} $E$ over $R$ is a sequence $\{E^{r},d^{ r}\}_{r \geq k }$, such that: 
\begin{enumerate}
\item[$(i)$] $E^{r}$ is a bigraded module over $R$, that is, an indexed collection of $R$-modules $E^{r}_{p,q}$, for all $p,q\in\mathbb{Z}$;

\item[$(ii)$] $d^{r}$ is a differential of bidegree $(-r,r-1)$ on $E^{r}$, i.e., an indexed collection of homomorphisms $d^{r}:E^{r}_{p,q}\rightarrow E^{r}_{p-r,q+r-1}$, for all $p,q\in\mathbb{Z}$, and $(d^{r})^{2}=0$;

\item[$(iii)$] for all $r\geq k$, there exists an isomorphism $H(E^r)\cong E^{r+1}$, where
\begin{align*}
H_{p,q}(E^r)&=\frac{\operatorname{Ker}\left( d^r:E^{r}_{p,q}\to E^r_{p-r,q+r-1} \right)}{\operatorname{Im}\left( d^r:E^{r}_{p+r,q-r+1}\to E^r_{p,q}\right)}.
\end{align*}
\end{enumerate}   
\end{definition}

We now define some important notions arising from the spectral sequence, for which we use the following notation:
\begin{align*}
    Z^k_{p,q}&\coloneqq \operatorname{Ker} \left(d^k_{p,q}: E^k_{p,q} \rightarrow E^k_{p-k,q+k-1}\right),\\
    B^k_{p,q}&\coloneqq \operatorname{Im} \left(d^k_{p+k,q-k+1}:E^k_{p+k,q-k+1} \rightarrow E^k_{p,q}\right).
\end{align*}
Then $B^k\subset Z^k$, $E^{k+1}=Z^k/B^k$ and we have for $r\geq k$:
\begin{align*}
 B^k\subset B^{k+1}\subset \ldots \subset B^r \subset \ldots \subset Z^r\subset \ldots\subset Z^{k+1}\subset Z^k. 
\end{align*}
Consider the bigraded modules: 
$$
Z^{\infty} \coloneqq \bigcap_r Z^r \quad \text{and} \quad 
B^{\infty} \coloneqq \bigcup_r B^r.
$$
This notation leads to the following definitions:
\begin{itemize}
    \item The bigraded module $E^{\infty} \coloneqq Z^{\infty}/B^{\infty}$ is called the \textit{limit of the spectral sequence}; 
    \item A spectral sequence $E=\{E^{r},d^{r}\}_{r\geq k}$ is \textit{convergent} if given $p,q$  there is  $r(p,q)\geq k$ such that for all $r\geq r(p,q)$, $d^r_{p,q}: E^{r}_{p,q} \rightarrow E^{r}_{p-r,q+r-1}$ is trivial;
    %\item A spectral sequence $E=\{E^{r},d^{r}\}_{r\geq k}$ is \textit{convergent in the strong sense} if given $p,q \in \mathbb{Z}$ there is  $r(p,q)\geq k$ such that $E^{r}_{p,q}\cong E^{\infty}_{p,q}$, for all $r\geq r(p,q)$.
\end{itemize}

We now introduce the concept of a filtration, which will be crucial for the results that follow.

\begin{definition}
Let $(C_{\ast},\partial_\ast)$ be a chain complex. An \textit{increasing filtration} $F$ on $(C_\ast, \partial_\ast )$ is a sequence of submodules $F_pC_{\ast}$ of $C_{\ast}$ such that $F_pC_{\ast}\subset F_{p+1}C_{\ast}$, for all integer $p$, and the filtration is compatible with the gradation of $(C_{\ast},\partial_{\ast})$, that is, $F_pC_{\ast}$ is a chain subcomplex of $C_{\ast}$ graded by $\{F_{p}C_{q}\}_{q}$.    
\end{definition}

From the preceding definition, we derive the following concepts:
\begin{itemize}
    \item A filtration $F$ on $C_{\ast}$ is called \textit{convergent} if $\cap_{p}F_{p}C_{\ast}=0$ and $\cup_{p}F_{p}C_{\ast} = C_{\ast}$; 
    \item It is called \textit{finite} if there are $p,p' \in \mathbb{Z}$ such that $F_{p}C_{\ast}=0$ and $F_{p'}C_{\ast}=C_{\ast}$;
    \item Also, it is said to be \textit{bounded below} if for any $q$ there is $p(q)$ such that $F_{p(q)}C_{q}=0$.
\end{itemize}
   
Given a filtration $F$ on $(C_{\ast}, \partial_{\ast})$, the \textit{associated bigraded module} $G(C)$ is defined as
\begin{align*}
G(C)_{p,q} \coloneqq \dfrac{F_{p}C_{p+q}}{F_{p-1}C_{p+q}}.
\end{align*}
A filtration $F$ on $C_{\ast}$ induces a filtration $F$ on $H_{\ast}(C_{\ast})$ defined by:
\begin{align*}
    F_{p}H_{\ast}(C) \coloneqq \operatorname{Im} \left( H_{\ast}(F_{p}C)\rightarrow H_{\ast}(C) \right).
\end{align*}
If the filtration $F$ on $C_{\ast}$ is convergent and bounded below then the same holds for the induced filtration on $H_{\ast}(C_{\ast})$.

More specifically, our results rely on a \textit{finest filtration} on a finite chain complex $(C_{\ast}, \partial_{\ast})$ where each $C_{k}$ is finitely generated. We order the collection of all basis elements of $C_\ast$ into a single sequence, the \textit{finest filtration} $F$ on $C_{\ast}$ is then defined on each graded piece.  One can reorder the set of the generators  of $C_{\ast}$ as $\{h_{0}^{1},\cdots, h_{0}^{\ell_{0}}, h_{1}^{\ell_{0}+ 1},\cdots, h_{1}^{\ell_{1}}, \cdots, h_{k}^{\ell_{k-1}+1},\cdots, h_{k}^{\ell_{k}}, \cdots \},$ where $\ell_{k}=c_{0}+\cdots + c_{k}$ {\footnote{Where $c_i$ denotes the rank of $C_i$, for each $i$.}}.  Let $F$ be a \textit{finest filtration} on $C_{\ast}$ defined by 
\begin{align*}
 F_{p}C_{k} = \displaystyle\bigoplus_{h^{\ell}_{k}, \ \ell \leq p+1} \mathbb{Z} \langle h^{\ell}_{k} \rangle .    
\end{align*}
The GGS chain complex associated with $(M, X)$ a GGS pair satisfying the condition $\mathcal{H}$ admits a finest filtration that is convergent and bounded below. Consequently, we can use the following theorem:
 
\begin{theorem}[Spanier,~\cite{Sp}]
Let $F$ be a convergent and bounded below filtration on a chain complex $(C_{\ast}, \partial_{\ast})$. There is a convergent  spectral sequence with 
$$E^{0}_{p,q} = \dfrac{F_{p}C_{p+q}}{F_{p-1}C_{p+q}} =G(C)_{p,q}  \qquad \text{and} \qquad E^{1}_{p,q} \cong H_{p+q}\left(\dfrac{F_{p}C_{p+q}}{F_{p-1}C_{p+q}}\right)$$
and $E^{\infty}$ is isomorphic to the bigraded module $GH_{\ast}(C)$ associated to the induced filtration on $H_ {\ast}(C)$. 
\end{theorem}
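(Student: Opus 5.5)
The statement is the classical construction of the spectral sequence of a filtered chain complex, and I would follow the standard route in \cite{Sp}. The plan has three stages: define the modules, identify $E^0$ and $E^1$, and prove convergence with $E^\infty\cong GH_*(C)$.

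\textbf{Construction.} For $r\ge 0$ set
\begin{align*}
Z^r_{p,q}&=\{c\in F_pC_{p+q} \mid \partial c\in F_{p-r}C_{p+q-1}\},\\
E^r_{p,q}&=\frac{Z^r_{p,q}}{Z^{r-1}_{p-1,q+1}+\partial Z^{r-1}_{p+r-1,q-r+2}}.
\end{align*}
The differential $d^r$ is induced by $\partial$. The bidegree $(-r,r-1)$ follows because $\partial$ maps $Z^r_{p,q}$ into $Z^r_{p-r,q+r-1}$. Also $(d^r)^2=0$ since $\partial^2=0$. Two identifications are then immediate:
\begin{itemize}
\item For $r=0$: $Z^0_{p,q}=F_pC_{p+q}$ and the denominator is $F_{p-1}C_{p+q}$, so $E^0=G(C)$ with $d^0$ the map induced by $\partial$ on the quotient complexes.
\item For $r=1$: $E^1_{p,q}=H(E^0)_{p,q}=H_{p+q}(F_pC/F_{p-1}C)$, computed directly from the definitions.
\end{itemize}

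\textbf{Main algebraic step.} I would prove $H(E^r)\cong E^{r+1}$ by identifying kernel and image concretely. The kernel of $d^r$ on $E^r_{p,q}$ is represented by $Z^{r+1}_{p,q}+Z^{r-1}_{p-1,q+1}$. The image of $d^r$ into $E^r_{p,q}$ is represented by $\partial Z^r_{p+r,q-r+1}+Z^{r-1}_{p-1,q+1}$. The quotient of these two representatives is then matched with $E^{r+1}_{p,q}$ via the modular law, using the inclusions
\begin{align*}
Z^{r+1}_{p,q}\cap Z^{r-1}_{p-1,q+1}&=Z^{r}_{p-1,q+1},\\
\partial Z^{r-1}_{p+r-1,q-r+2}&\subset \partial Z^r_{p+r,q-r+1}.
\end{align*}
I expect this zig-zag bookkeeping to be the main obstacle. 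I would organise it as a single lemma (a Noether/Zassenhaus isomorphism) rather than tracking elements.

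\textbf{Convergence and $E^\infty$.} Fix $(p,q)$. Bounded below gives $F_{p-r}C_{p+q-1}=0$ for $r$ large. Then $Z^r_{p,q}$ stabilises at the cycles in $F_pC_{p+q}$, so $d^r$ vanishes on $E^r_{p,q}$ and the sequence is convergent. The denominators increase to $(F_{p-1}C\cap Z)+(F_pC\cap B)$, where $Z$ and $B$ denote cycles and boundaries. Here convergence of the filtration, $\cup_pF_p=C$, is what ensures every boundary in $F_p$ is eventually $\partial$ of something in some $F_{p+r-1}$. Hence
$$E^\infty_{p,q}=\frac{F_pZ}{F_{p-1}Z+F_pC\cap B}\cong \frac{F_pH}{F_{p-1}H}=GH_*(C)_{p,q}.$$
The last isomorphism is the canonical map from cycles in $F_p$ to $F_pH$, and its kernel is checked directly.
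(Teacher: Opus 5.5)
Your proposal is correct and is essentially the standard proof in Spanier \cite{Sp}; the paper relies on that proof and gives no argument of its own beyond the citation. The model $E^r_{p,q}=Z^r_{p,q}/(Z^{r-1}_{p-1,q+1}+\partial Z^{r-1}_{p+r-1,q-r+2})$, the modular-law identification $H(E^r)\cong E^{r+1}$, the use of bounded-belowness for convergence, and the use of $\bigcup_p F_pC=C$ for $E^\infty\cong GH_*(C)$ all match Spanier. The checks you leave implicit are immediate: $\partial$ carries the denominator of $E^r_{p,q}$ into that of $E^r_{p-r,q+r-1}$ (so $d^r$ is well defined), and $\partial Z^r_{p+r,q-r+1}\subset Z^{r+1}_{p,q}$ (needed for the modular law).
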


The spectral sequence associated to $(C_{\ast}, \partial_{\ast})$ with this finest filtration has a special property: the only $q$ for which $E_{p,q}^r$ is non-zero is $q=k-p$, where $k$ is the index of the chain in $F_p C \setminus F_{p-1} C$. Hence, in this case, we omit reference to $q$.  

\subsection{Spectral Sequences Analysis for GGS chain complex}

This section establishes a key property of the GGS chain complex. After unfolding the associated spectral sequence, we can perform homotopical dynamical cancellations in the sense of Section \ref{sec:detecting_dynamical_homotopical_cancellation}. Similar results were previously presented in \cite{2021homotopical}. Since the chain complex in Definition \ref{def:GS_chain} generalizes the GS chain complex studied in \cite{2021homotopical}, it is natural to expect analogous results under our broader assumptions. Indeed,  such results are obtained.

A primary result of our spectral sequence analysis is the construction of a core flow,  defined as follows.

\begin{definition}
Let $(M, X)$ be a GGS pair satisfying the condition $\mathcal{H}$. If there is no triple of singularities in $\operatorname{Sing}(X)$ that can be homotopically cancelled, we say that the  $X$ is a \textit{core GGS vector field}, or that the induced flow $\varphi_X$ is a \textit{core flow}.
\end{definition}

A classical example of a core flow in the smooth case is given by the height function on a sphere. In Figures \ref{fig_intr_01} and \ref{fig_intr_02} we show core flows on a GGS manifold after cancellation.

Theorem~\ref{teo_bijecao_sequencia} below is central to our construction. It not only ensures the existence of a core flow but also establishes that the intermediate steps of our procedure yield a family of GGS flows. Moreover, Theorem~\ref{teo_bijecao_sequencia} highlights the strong relationship between our chain complex and the associated spectral sequence which ``translates'' all dynamical cancellations into algebraic terms. In other words, there is a one-to-one correspondence between homotopical dynamical cancellations and algebraic cancellations.  By analysing the unfolding of the spectral sequence, one  sees that whenever a differential $d^r_{p}: E^r_p \rightarrow E_p^{r-1}$ is an isomorphism, then at the next stage of the spectral sequence, one has an \textit{algebraic cancellation} $E^{r+1}_p=E^{r+1}_{q}=0$.

\begin{theorem}
\label{teo_bijecao_sequencia}
Let $(C_*(M,X; \Z), \triangle_*)$ be the GGS chain complex associated with a GGS pair $(M, X)$ satisfying condition $\mathcal{H}$ and assume that $M$ admits an orientable morsification. Let $(E^r, d^r)$ be the spectral sequence associated with a finest filtration $F$ on $(C_*(M,X; \Z), \triangle_*)$. Then: 

\begin{enumerate}
    \item[$(a)$] the algebraic cancellations of the modules $E^r$  are in one-to-one correspondence with the homotopical dynamical cancellations involving  the natures of the singularities of $X$;
    
    \item[$(b)$] the order in which these homotopical cancellations occur corresponds to the increasing values of $r$ in the filtration $F$;

    \item[$(c)$] there exists a family of GGS flows $\{\varphi^1=\varphi_X, \varphi^2, \cdots, \varphi^{\omega}\}$, where  $\varphi^{r+1}$ is obtained from $\varphi^r$ by a collision through a homotopical dynamical cancellation and $\varphi^{\omega}$ is a core GGS flow.
\end{enumerate}
\end{theorem}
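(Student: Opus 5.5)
The plan is to run the spectral sequence of the finest filtration in parallel with a sequence of flows. The algebraic engine is the usual sweeping-algorithm description of the spectral sequence of a finest filtration (as in \cite{BLMdRS, 2021homotopical}). We build a sequence of matrices $\triangle^r$ starting at $\triangle^1=\triangle$, where $\triangle^{r+1}$ is obtained from $\triangle^r$ by change-of-basis operations on the $r$-th diagonal. In this description, the differential $d^r_p\colon E^r_p\to E^r_{p-r}$ is nonzero exactly when the entry $\triangle^r_{p-r,p}$ is a \emph{primary pivot}. Here the pair of columns $p$ and $p-r$ corresponds to a $k$-generator $h_k^{\ell}$ and a $(k-1)$-generator. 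Since all entries lie in $\{-1,0,1\}$, by Theorem~\ref{teo_matriz}$(a)$, a nonzero $d^r_p$ is automatically an isomorphism $\Z\to\Z$. It therefore produces the algebraic cancellation $E^{r+1}_p=E^{r+1}_{p-r}=0$.

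\textbf{Step 1: pivots give cancellations.} Let $\triangle^r_{p-r,p}\neq0$ be a primary pivot linking, say, $h_2^{\ell}(x_1)$ and $h^m_1(x_2)$. The first thing to check is that the nonzero entry corresponds to a genuine intersection number of the current flow $\varphi^r$, so that Theorem~\ref{teo_cancelamento} applies. For $r=1$ this is immediate from Definition~\ref{def:GS_chain}. Theorem~\ref{teo_matriz}$(c)$ then produces the third generator $h_2(x_3)$ in the same row, and Theorem~\ref{teo_cancelamento} yields the collision $S=\{x_1,x_2,x_3,u_1,u_2\}\mapsto\{x'\}$, giving the flow $\varphi^{r+1}$.

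\textbf{Step 2: invariance under cancellation.} We must show that the GGS chain complex of $(M^{r+1},\varphi^{r+1})$ is obtained from that of $(M^r,\varphi^r)$ by deleting the two cancelled generators, with boundary matrix equal to $\triangle^{r+1}$ restricted to the surviving generators. By Corollary~\ref{corollary_nature_number}, exactly one $k$- and one $(k-1)$-generator disappear, and the remaining generators of $x_1,x_2,x_3$ are inherited by $x'$. Proposition~\ref{prop_herdeiros} guarantees that the singular structure is preserved, so $(M^{r+1},\varphi^{r+1})$ is again a GGS pair satisfying condition $\mathcal H$ with an orientable morsification. Its boundary matrix is then recomputed by morsifying the block $N'$. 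The column (resp.\ row) that was combined in the sweeping step, namely column $p$ added to column $p'$ when $\triangle^r_{p-r,p'}\neq0$, corresponds dynamically to the flow lines of $x_3$ now reaching, through $x'$, the targets that $x_1$ reached. This gives $\triangle^{r+1}$, because Theorem~\ref{teo_matriz}$(b),(c)$ force the relevant rows to contain exactly two nonzero entries of opposite sign. The filtration order is preserved, which gives part $(b)$: cancellations happen in the order of the diagonals $r$, with ties within a diagonal resolved by the order of $p$.

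\textbf{Step 3: bijection and termination.} Part $(a)$ follows by induction on $r$. Every algebraic cancellation arises from a pivot, and hence by Step 1 from a dynamical cancellation. Conversely, every homotopical cancellation available to $\varphi^r$ comes from a nonzero intersection number. By Theorem~\ref{teo_matriz}, that number is a nonzero entry of $\triangle^r$ on some diagonal $r'\geq r$, and it is detected there as a pivot. Since the complex is finite, the process stops at some $\omega$ when all $d^r$ vanish, i.e.\ $E^\omega=E^\infty$. At that point $\triangle^\omega$ has no nonzero entries, so by Theorem~\ref{teo_cancelamento} in contrapositive form no triple can be cancelled, and $\varphi^\omega$ is a core flow. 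This gives part $(c)$.

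The main obstacle is Step 2: showing that the recomputed intersection numbers of $\varphi^{r+1}$ after the geometric collision agree exactly with the sweeping-algorithm update, including signs. This is hardest when $x'$ is a mixed singularity or involves a saddle cone, where characteristic signs are defined via Definition~\ref{def:intersection}$(iii)$. My first attempt would be to construct the morsification of $N'$ directly from that of $N$ by a classical Morse cancellation of $\widetilde{x}_2$ with $h_2(x_3)$ in $\widetilde M$. Classical Morse cancellation updates the Morse boundary exactly by the column-addition rule, and one then transports this to the GGS matrix using the identities $\triangle=\sum\partial^m$ from the proof of Theorem~\ref{teo:complexo}.
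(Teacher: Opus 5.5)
Your route is the paper's. You read nonzero $d^r$ as primary pivots of the sweeping/row-cancellation algorithms on $\triangle$, get the third generator from Theorem~\ref{teo_matriz}(b),(c), and perform the collision with Theorem~\ref{teo_cancelamento}. Your column (resp.\ row) addition is exactly the paper's update $n(h^t_2(x'),h^p_1(w);\varphi')=n(h^i_2(x_1),h^p_1(w);\varphi^1)+n(h^l_2(x_3),h^p_1(w);\varphi^1)$ and its dual for $k=1$. The paper justifies this update simply by observing that every flow line other than $u_1,u_2$ is unchanged. (The eager row-clearing you describe is what the paper calls the RCA; it is tied to $d^r$ through the cited fact that the RCA and the SSSA have the same primary pivots.)

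The step that fails is the converse half of your Step~3. A nonzero entry of the current matrix need not ever become a primary pivot: each generator dies in at most one algebraic cancellation, while a saddle generator can have nonzero intersection numbers in both adjacent degrees. In Example~\ref{ex_cancelamento}, $n(h^1_1(x_3),h^1_0(x_{10}))=-1$. By Theorems~\ref{teo_matriz}(b) and \ref{teo_cancelamento}, the triple $h^1_0(x_{10}),h^1_1(x_3),h^1_0(x_{11})$ is therefore cancellable for $\varphi^1$. But $d^1_{11}$ cancels $h^1_1(x_3)$ against $h^1_2(x_1)$, so $E^2_{10}=0$ and that cancellation is never detected. Part (a) can only mean, as in the paper's proof, a bijection between algebraic cancellations and the cancellations actually performed along the family $\{\varphi^r\}$. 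That direction holds by construction, so drop the converse argument.

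Three further points need repair.
\begin{itemize}
\item At the end of Step~3, passing from a zero matrix to a core flow is not Theorem~\ref{teo_cancelamento} in contrapositive form; it is equivalent to the converse of that theorem. The paper simply asserts this identification, so either do the same explicitly or argue it.
\item In Step~2, Proposition~\ref{prop_herdeiros} only adds singular numbers. It does not show that the collided pair again satisfies $\mathcal H$ with an orientable morsification, and Remark~\ref{obs_selas_misturadas} exhibits a cancellation permitted by Theorem~\ref{teo_cancelamento} that leaves the class. The paper also takes this preservation for granted when it reads later pivots as intersection numbers of the new flow. State it as an assumption rather than deriving it from Proposition~\ref{prop_herdeiros}.
\item Your suggested attack on Step~2 by a classical Morse cancellation in $\widetilde M$ cannot work verbatim. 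When $x'$ is a wedge singularity $\mathcal{P}\vee\mathcal{Q}$ (cases (2) and (3) in the proof of Proposition~\ref{prop_herdeiros}), the proof of Theorem~\ref{teo:morsificacao} morsifies the new block as a disjoint union $\widetilde N_1\sqcup\widetilde N_2$. That is not the Morse-cancelled $\widetilde N$. The paper avoids the morsified level and argues directly with the unchanged flow lines in $M$.
\end{itemize}
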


\begin{proof}
Throughout this proof, we use the notation $n(h^j_k(x_1), h^m_{k-1}(x_2); \varphi)$ to denote the intersection number between the generators $h^j_k(x_1)$ and $h^m_{k-1}(x_2)$ of the GGS complex with respect to the flow $\varphi$. We denote the original GGS flow by $\varphi^1=\varphi_X$.

The proof relies on two algorithms: Spectral Sequence Sweeping Algorithm (SSSA) and Row Cancellation Algorithm (RCA), both of which take as input a totally unimodular upper triangular matrix $\Delta$ satisfying $\Delta^2=0$. Each algorithm produces a family of matrices obtained by sweeping the diagonal of $\Delta$ and performing changes of basis (in different orders). 
The SSSA, introduced in \cite{CORNEA_DEREZENDE_DASILVEIRA_2010},  recovers the modules and differentials of a spectral sequence arising from a finest filtration on a finite chain complex over $\mathbb{Z}$. Further details can be found in \cite{CORNEA_DEREZENDE_DASILVEIRA_2010}.
The RCA, introduced in \cite{BLMdRS, bertolim2017algebraic}, interprets the SSSA through the lens of homotopical dynamical cancellations.  More specifically, it performs changes of basis in a different order, reflecting the birth and death of connections that occur during cancellations.

Now, let $(E^r, d^r)$ be the spectral sequence associated with a finest filtration $F$ on $(C_*(M,X; \Z), \triangle_*)$. 

Consider the case $r=1$. Given a non-zero differential $d^1_{p,q}$ of the spectral sequence, it corresponds to multiplication by a primary pivot detected by the SSSA when applied to $\Delta$, namely $\Delta^1(p,p+1) = \pm 1$ (i.e., a non-zero entry of the matrix of the boundary operator $\triangle$). Therefore, there exist singularities $x_1, x_2 \in \operatorname{Sing}(X)$ with consecutive generators $h_k^i(x_1)$, $h_{k-1}^{j}(x_2)$ such that
\begin{align*}
    n(h_k^i(x_1), h_{k-1}^{j}(x_2); \varphi^1)=\pm 1.
\end{align*}
By Theorem \ref{teo_cancelamento}, these generators  admit a homotopical dynamical cancellation, that is, there exists a  singularity $x_3\in \operatorname{Sing}(X)$ such that the isolated invariant set
\begin{align*}
    S=\{ x_1, x_2, x_3, u_1, u_2 \},
\end{align*}
where $u_1$ and $u_2$ are the flow lines  connecting the three singularities, admits a homotopical dynamical cancellation to a single GGS singularity, $\{x'\}=S'$. Moreover, there exists a new GGS pair $(M', X')$ with a GGS flow $\varphi_{X'}=\varphi'$ that coincides with $\varphi^1$ outside a neighborhood of $S'$.

Next, we analyze the relations between the intersection numbers of generators with respect to the flows $\varphi^1$ and $\varphi'$.

By hypothesis, the morsified manifold $M$ is orientable hence, without loss of generality, we can assume that the unstable manifolds of the repelling singularities of the morsified flow $\varphi_{\widetilde{X}}$ all share the same orientation. Thus, if $h^j_1(x)$ is a saddle singularity in the morsification, then the flow lines in the set
\begin{align*}
    W^s(h^j_1(x))\setminus\{ h^j_1(x) \},
\end{align*}
have opposite characteristic signs. The same holds for $W^u(h^j_1(x))\setminus\{ h^j_1(x) \}$.

We have two cases to consider: $k=2$ and $k=1$.
\begin{enumerate}
    \item[$(1)$] If $k=2$, we have that if $n(h^i_2(x_1), h^{j}_{1}(x_2))=\pm 1$, then
$\mathcal{M}_{h^i_2(x) h^{j}_1(y)}=\{ u_1 \}$ and $\mathcal{M}_{h^l_2(x_3)h^{j}_1(x_2)}=\{ u_2 \}$. If we cancel the generators $h^i_2(x_1)$ with $h^{j}_1(x_2)$, then every saddle generator $h^p_1(w)$ that connects either to $h^i_2(x_1)$ or to $h^l_2(x_3)$ in $\varphi^1$ will connect to a generator $h^t_2(x')$ in $\varphi'$. The existence of the generator $h^t_2(x')$ is guaranteed by Corollary \ref{corollary_nature_number}. Since the flow lines other than $u_1$ and $u_2$ remain unchanged, we obtain the following relation between the intersection numbers:
\begin{align*}
    n(h^t_2(x'), h^p_1(w); \varphi')=n(h^i_2(x_1), h^p_1(w); \varphi^1) + n(h^l_2(x_3), h^p_1(w); \varphi^1).
\end{align*}

\item[$(2)$] If $k=1$, we have $n(h^i_1(x_1), h^{j}_{0}(x_2))=\pm 1$, then $\mathcal{M}_{h^i_1(x_1) h^{j}_0(x_2)}=\{ u_1 \}$ and $\mathcal{M}_{h^{i}_1(x_1)h^l_0(x_3)}=\{ u_2 \}$. By Corollary \ref{corollary_nature_number}, cancelling the  generators, $h^i_1(x_1)$ with $h^{j}_0(x_2)$, implies that every saddle generator $h^p_1(w)$ that connects to $h^{j}_0(x_2)$ or to $h^{l}_0(x_3)$ in $\varphi^1$ will instead connect to $h^t_0(x')$ in $\varphi'$. Since the flow lines other than $u_1$ and $u_2$ remain unchanged, we obtain the following relation between the intersection numbers:
$$
n(h^p_1(w), h^t_0(x'); \varphi')=n(h^p_1(w), h^{j}_0(x_2); \varphi^1) + n(h^p_1(w), h^l_0(x_3); \varphi^1).$$
\end{enumerate}

By performing all cancellations determined by the non-zero differentials of the spectral sequence on the first diagonal, we obtain a GGS flow $\varphi_2$. Interestingly, the matrix associated to the GGS boundary operator for $\varphi_2$ is obtained from the matrix of the flow $\varphi_1$ by applying the RCA algorithm and deleting the rows and columns corresponding to the primary pivots on the first diagonal.

Now, for $r=2$, we consider differentials $d^2_{p,q}$ of the spectral sequence. The non-zero differentials correspond to primary pivots on the second diagonal of the SSSA; these same  entries appear as primary pivots on the second diagonal of the RCA, and represent intersection numbers between generators with respect to the flow $\varphi_2$. By Theorem \ref{teo_cancelamento}, we can cancel them exactly as was done in the initial process.

Since the set of generators of the GGS chain complex is finite, we can repeat this process until we obtain a zero matrix on the RCA. At this final stage, there are no longer  non-zero intersection numbers, and no further cancellations are possible, that is, we have reached a core flow. Thus we obtain  a family of GGS flows $ \{ \varphi^1=\varphi_X, \varphi^2, \ldots, \varphi^\omega \},$ with the desired properties.
\end{proof}

Now, we present an application of Theorem \ref{teo_bijecao_sequencia} to detect cancellations of singularities until we obtain a core flow. We also describe the matrix associated with the GGS chain complex at each stage of the cancellation.

\begin{example}
\label{ex_cancelamento}
Consider the GGS pair $(M,X)$ presented in Example \ref{exemplo_02}. The matrix associated with the GGS chain complex is given in Table \ref{table:1}. 

\begin{table}[!ht]
\centering
{
\resizebox{\textwidth}{!}{
\begin{tabular}{|c|c|c|c|c|c|c|c|c|c|c|c|c|c|c| } 
\hline
& \cellcolor{ForestGreen!40} $h^1_0(x_{12})$ & \cellcolor{ForestGreen!40} $h^1_0(x_8)$ & \cellcolor{ForestGreen!40} $h^1_0(x_{11})$ & \cellcolor{ForestGreen!40} $h^1_0(x_{7})$ & \cellcolor{ForestGreen!40} $h^1_0(x_{6})$ & \cellcolor{ForestGreen!40} $h^1_0(x_{10})$ & \cellcolor{ForestGreen!40} $h^1_1(x_9)$ & \cellcolor{ForestGreen!40} $h^1_1(x_{4})$ & \cellcolor{ForestGreen!40} $h^1_1(x_{5})$ & \cellcolor{ForestGreen!40} $h^2_1(x_5)$ & \cellcolor{ForestGreen!40} $h^1_1(x_{3})$ & \cellcolor{ForestGreen!40} $h^1_2(x_{1})$ & \cellcolor{ForestGreen!40} $h^1_2(x_{2})$ & \cellcolor{ForestGreen!40} $h^2_2(x_{2})$\\
\hline
\cellcolor{ForestGreen!40} $h^1_0(x_{12})$ & $0$ & $0$ & $0$ & $0$ & $0$ & $0$ & \cellcolor{WildStrawberry!40}$0$ &\cellcolor{WildStrawberry!40} $0$ & \cellcolor{WildStrawberry!40}$0$ & \cellcolor{WildStrawberry!40}$+1$ & \cellcolor{WildStrawberry!40}$0$ & $0$ & $0$ & $0$\\
\hline
\cellcolor{ForestGreen!40} $h^1_0(x_8)$ & $0$ & $0$ & $0$ & $0$ & $0$ & $0$ & \cellcolor{WildStrawberry!40}$0$ &\cellcolor{WildStrawberry!40}$0$ & \cellcolor{WildStrawberry!40}$+1$ & \cellcolor{WildStrawberry!40}$0$ & \cellcolor{WildStrawberry!40}$0$ & $0$ & $0$ & $0$\\
\hline
\cellcolor{ForestGreen!40} $h^1_0(x_{11})$ & $0$ & $0$ & $0$ & $0$ & $0$ & $0$ & \cellcolor{WildStrawberry!40}$-1$ & \cellcolor{WildStrawberry!40}$0$ & \cellcolor{WildStrawberry!40}$0$ & \cellcolor{WildStrawberry!40}$-1$ & \cellcolor{WildStrawberry!40}$+1$ & $0$ & $0$ & $0$\\
\hline
\cellcolor{ForestGreen!40} $h^1_0(x_{7})$ & $0$ & $0$ & $0$ & $0$ & $0$ & $0$ & \cellcolor{WildStrawberry!40}$0$ & \cellcolor{WildStrawberry!40}$+1$ & \cellcolor{WildStrawberry!40}$-1$ & \cellcolor{WildStrawberry!40}$0$ & \cellcolor{WildStrawberry!40}$0$ & $0$ & $0$ & $0$\\
\hline
\cellcolor{ForestGreen!40} $h^1_0(x_{6})$ & $0$ & $0$ & $0$ & $0$ & $0$ & $0$ & \cellcolor{WildStrawberry!40} $0$ & \cellcolor{WildStrawberry!40}$-1$ & \cellcolor{WildStrawberry!40}$0$ & \cellcolor{WildStrawberry!40}$0$ & \cellcolor{WildStrawberry!40}$0$ & $0$ & $0$ & $0$\\
\hline
\cellcolor{ForestGreen!40} $h^1_0(x_{10})$ & $0$ & $0$ & $0$ & $0$ & $0$ & $0$ & \cellcolor{WildStrawberry!40}$+1$ & \cellcolor{WildStrawberry!40}$0$ & \cellcolor{WildStrawberry!40}$0$ & \cellcolor{WildStrawberry!40}$0$ & \cellcolor{WildStrawberry!40}$-1$ & $0$ & $0$ & $0$\\
\hline
\cellcolor{ForestGreen!40} $h^1_1(x_{9})$ & $0$ & $0$ & $0$ & $0$ & $0$ & $0$ & $0$ & $0$ & $0$ & $0$ & $0$ & \cellcolor{MidnightBlue!40}$+1$ & \cellcolor{MidnightBlue!40}$0$ & \cellcolor{MidnightBlue!40}$-1$\\
\hline
\cellcolor{ForestGreen!40} $h^1_1(x_{4})$ & $0$ & $0$ & $0$ & $0$ & $0$ & $0$ & $0$ & $0$ & $0$ & $0$ & $0$ & \cellcolor{MidnightBlue!40}$0$ & \cellcolor{MidnightBlue!40}$0$ & \cellcolor{MidnightBlue!40}$0$\\
\hline
\cellcolor{ForestGreen!40} $h^1_1(x_{5})$ & $0$ & $0$ & $0$ & $0$ & $0$ & $0$ & $0$ & $0$ & $0$ & $0$ & $0$ & \cellcolor{MidnightBlue!40}$0$ & \cellcolor{MidnightBlue!40}$0$ & \cellcolor{MidnightBlue!40}$0$\\
\hline
\cellcolor{ForestGreen!40} $h^2_1(x_{5})$ & $0$ & $0$ & $0$ & $0$ & $0$ & $0$ & $0$ & $0$ & $0$ & $0$ & $0$ & \cellcolor{MidnightBlue!40}$0$ & \cellcolor{MidnightBlue!40}$0$ & \cellcolor{MidnightBlue!40}$0$\\
\hline
\cellcolor{ForestGreen!40} $h^1_1(x_{3})$ & $0$ & $0$ & $0$ & $0$ & $0$ & $0$ & $0$ & $0$ & $0$ & $0$ & $0$ & \cellcolor{MidnightBlue!40}\tikz[baseline=(current bounding box.center)]\node[draw, circle, fill=white, inner sep=0.5pt]{+1}; & \cellcolor{MidnightBlue!40}$0$ & \cellcolor{MidnightBlue!40}$-1$\\
\hline
\cellcolor{ForestGreen!40} $h^1_2(x_{1})$ & $0$ & $0$ & $0$ & $0$ & $0$ & $0$ & $0$ & $0$ & $0$ & $0$ & $0$ & $0$ & $0$ & $0$\\
\hline
\cellcolor{ForestGreen!40} $h^1_2(x_{2})$ & $0$ & $0$ & $0$ & $0$ & $0$ & $0$ & $0$ & $0$ & $0$ & $0$ & $0$ & $0$ & $0$ & $0$\\
\hline
\cellcolor{ForestGreen!40} $h^2_2(x_{2})$ & $0$ & $0$ & $0$ & $0$ & $0$ & $0$ & $0$ & $0$ & $0$ & $0$ & $0$ & $0$ & $0$ & $0$\\
\hline
\end{tabular}
}
}
\caption{Matrix  associated with the  $(C_*(M,X;\Z), \triangle_*)$.}
\label{table:1}
\end{table}

Consider the finest filtration on
$(C_*(M,X;\Z), \triangle_*)$ induced by the following order of its generators
\[
\begin{aligned}
\{\, &h^1_0(x_{12}),\, h^1_0(x_{8}),\, h^1_0(x_{11}),\, h^1_0(x_{7}),\, 
       h^1_0(x_{6}),\, h^1_0(x_{10}),\, h^1_1(x_{9}), \\
    & h^1_1(x_{4}), \, h^1_1(x_{5}),\, h^2_1(x_{5}),\, h^1_1(x_{3}),\, 
      h^1_2(x_{1}),\, h^1_2(x_{2}),\, h^2_2(x_{2}) \,\}.
\end{aligned}
\]
The associated spectral sequence is shown in Figure \ref{fig_ss}. 

\begin{figure}[h!]
\centering
\begin{tikzpicture}[
    scale=1, transform shape,
    diff/.style={draw=blue, ->, >=Stealth, thick}
]
\matrix (m) [matrix of math nodes, 
             row sep=1em,
             column sep=-0.5em,
             nodes={anchor=center,font=\normalsize},
             % --- CORREÇÃO APLICADA AQUI ---
             row 1/.style={nodes={font=\tiny}}, % Define a fonte para a linha 1
             column 1/.style={nodes={font=\normalsize}}, % Define a fonte para a linha 1
             nodes in empty cells                    % Necessário para o estilo da linha
            ]
{
% --- Cabeçalho (agora sem \footnotesize manual) ---
& h_0^1(x_{12}) & h_0^1(x_{8}) & h_0^1(x_{11}) & h_0^1(x_{7}) & h_0^1(x_{6}) & h_0^1(x_{10}) & h_1^1(x_{9}) & h_1^1(x_{4}) & h_1^1(x_{5}) & h_1^2(x_{5}) & h_1^1(x_{3}) & h_2^1(x_{1}) & h_2^1(x_{2}) & h_2^2(x_{2}) \\
% --- Linha E^0 ---
E^0: & \mathbb{Z} & \mathbb{Z} & \mathbb{Z} & \mathbb{Z} & \mathbb{Z} & \mathbb{Z} & \mathbb{Z} & \mathbb{Z} & \mathbb{Z} & \mathbb{Z} & \mathbb{Z} & \mathbb{Z} & \mathbb{Z} & \mathbb{Z} \\
% --- Linha E^1 ---
E^1: & \mathbb{Z} & \mathbb{Z} & \mathbb{Z} & \mathbb{Z} & \mathbb{Z} & \mathbb{Z} & \mathbb{Z} & \mathbb{Z} & \mathbb{Z} & \mathbb{Z} & \mathbb{Z} & \mathbb{Z} & \mathbb{Z} & \mathbb{Z} \\
% --- Linha E^2 ---
E^2: & \mathbb{Z} & \mathbb{Z} & \mathbb{Z} & \mathbb{Z} & \mathbb{Z} & 0 & 0 & \mathbb{Z} & \mathbb{Z} & \mathbb{Z} & 0 & 0 & \mathbb{Z} & \mathbb{Z} \\
% --- Linha E^3 ---
E^3: & \mathbb{Z} & \mathbb{Z} & \mathbb{Z} & \mathbb{Z} & \mathbb{Z} & 0 & 0 & \mathbb{Z} & \mathbb{Z} & \mathbb{Z} & 0 & 0 & \mathbb{Z} & \mathbb{Z} \\
% --- Linha E^4 ---
E^4: & \mathbb{Z} & \mathbb{Z} & \mathbb{Z} & \mathbb{Z} & 0 & 0 & 0 & 0 & \mathbb{Z} & \mathbb{Z} & 0 & 0 & \mathbb{Z} & \mathbb{Z} \\
% --- Linha E^5 ---
E^5: & \mathbb{Z} & \mathbb{Z} & \mathbb{Z} & \mathbb{Z} & 0 & 0 & 0 & 0 & \mathbb{Z} & \mathbb{Z} & 0 & 0 & \mathbb{Z} & \mathbb{Z} \\
% --- Linha E^6 ---
E^6: & \mathbb{Z} & \mathbb{Z} & \mathbb{Z} & 0 & 0 & 0 & 0 & 0 & 0 & \mathbb{Z} & 0 & 0 & \mathbb{Z} & \mathbb{Z} \\
% --- Linha E^7 ---
E^7: & \mathbb{Z} & \mathbb{Z} & \mathbb{Z} & 0 & 0 & 0 & 0 & 0 & 0 & \mathbb{Z} & 0 & 0 & \mathbb{Z} & \mathbb{Z} \\
% --- Linha E^8 ---
E^8: & \mathbb{Z} & \mathbb{Z} & 0 & 0 & 0 & 0 & 0 & 0 & 0 & 0 & 0 & 0 & \mathbb{Z} & \mathbb{Z} \\
};
% --- Setas
\path [diff, bend left=40] (m-3-8) edge node[below, yshift=-1mm] {$d_6^1$} (m-3-7);
\path [diff, bend left=40] (m-3-13) edge node[below, yshift=-1mm] {$d_{11}^1$} (m-3-12);
\path [diff, bend left=30] (m-5-9) edge node[below] {$d_7^3$} (m-5-6);
\path [diff, bend left=20] (m-7-10) edge node[below] {$d_8^5$} (m-7-5);
\path [diff, bend left=15] (m-9-11) edge node[below] {$d_9^7$} (m-9-4);
\end{tikzpicture}
\caption{Spectral sequence diagram for Example \ref{ex_cancelamento}.}
\label{fig_ss}
\end{figure}

Note that the non-zero differentials of the spectral sequence determine the following algebraic cancellations:
\begin{itemize}
    \item $r=1$: $d^1_{11}:E^1_{11}\rightarrow E^1_{10}$ produces the algebraic cancellations $E^2_{11} = E^2_{10} =0$; $d^1_{6}:E^1_{6}\rightarrow E^1_{5}$ produces the algebraic cancellations $E^2_{6} = E^2_{5} =0$;
    \item  $r=3$: $d^3_{7}:E^3_{7}\rightarrow E^3_{4}$ produces the algebraic cancellations $E^4_{7} = E^4_{4} =0$; 
   \item  $r=5$: $d^5_{8}:E^5_{8}\rightarrow E^5_{3}$ produces the algebraic cancellations $E^6_{8} = E^6_{3} =0$;
    \item  $r=7$: $d^7_{9}:E^7_{9}\rightarrow E^7_{2}$ produces the algebraic cancellations $E^{8}_{9} = E^{8}_{2} =0$.
\end{itemize}
As we proved in Theorem \ref{teo_bijecao_sequencia}, these algebraic cancellations determine the homotopical dynamical cancellations of the singularities of the initial GGS flow, and by following the unfolding of the spectral sequence, we obtain a GGS core flow. In what follows, we explicitly describe how this correspondence occurs.

We begin by analysing $d^1_{11}$ which determines the homotopical dynamical cancellations of  $h^1_2(x_1)$ and  $h^1_1(x_3)$. In fact, note that $n(h^1_2(x_1), h^1_1(x_3))=+1$, hence, by the Cancellation Theorem \ref{teo_cancelamento}, we can cancel these generators together with $h^2_2(x_2)$, producing a new singularity of type $\mathcal{W}_2\mathcal{D}_2$ with repelling nature, as illustrated in $(M_1,X_1)$ in Figure \ref{fig:exemplo_final_5} (right). The matrix of the associated GGS chain complex is given in Table \ref{table:2}.

\begin{figure}[!ht]
\centering
\begin{overpic}[unit=1mm, scale=.25, width=10.5cm]{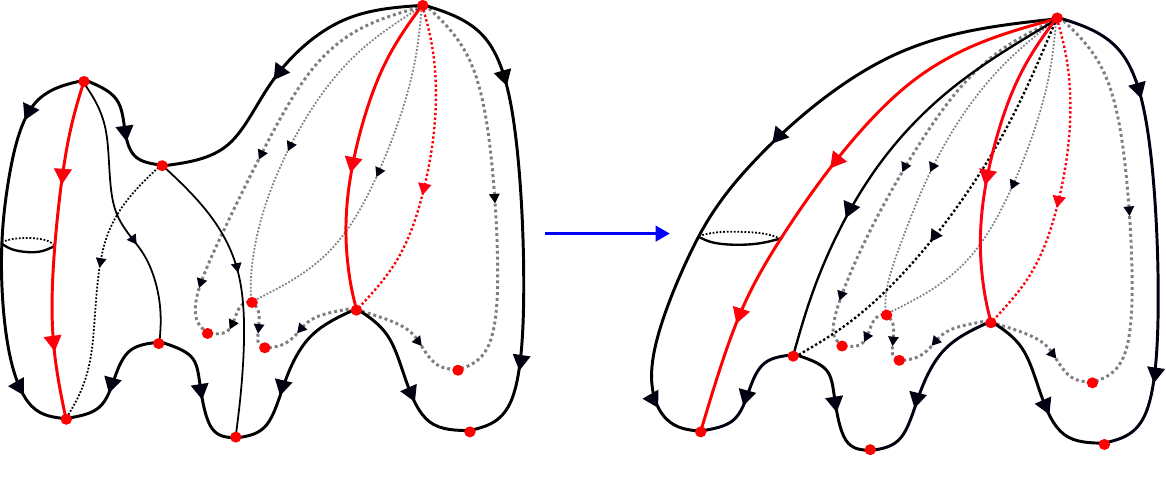}
\put(10,36){$(M,X)$}
\put(5.5,35.5){$x_1$}
\put(35,42){$x_2$}
\put(12.5,28.5){$x_3$}
\put(20.8,16.5){\footnotesize{$x_4$}}
\put(29.5,12){$x_5$}
\put(16.6,10.8){\footnotesize{$x_6$}}
\put(21,9.5){\footnotesize{$x_7$}}
\put(38,7.5){$x_8$}
\put(12,9.5){$x_9$}
\put(4,3){$x_{10}$}
\put(18,1.5){$x_{11}$}
\put(37.2,2){$x_{12}$}
%%%%%%
\put(60,36){$(M_1,X_1)$}
\put(90,41){$x'_1$}
\put(77,15.5){\footnotesize{$x_4$}}
\put(83.5,11){$x_5$}
\put(71,10){\footnotesize{$x_6$}}
\put(76.5,8.3){\footnotesize{$x_7$}}
\put(93,6){$x_8$}
\put(67,8.5){$x_9$}
\put(57,2.2){$x_{10}$}
\put(72,0.5){$x_{11}$}
\put(94,1){$x_{12}$}
%%%%
\end{overpic}
\caption{Initial GGS manifold $M$ and first cancellation.}
\label{fig:exemplo_final_5}
\end{figure}

\begin{table}[!ht]
\centering
{
\begin{tabular}{|c|c|c|c|c|c|c|c|c|c|c|c|c| } 
\hline
& \cellcolor{ForestGreen!40} $h^1_0(x_{12})$ & \cellcolor{ForestGreen!40} $h^1_0(x_8)$ & \cellcolor{ForestGreen!40} $h^1_0(x_{11})$ & \cellcolor{ForestGreen!40} $h^1_0(x_{7})$ & \cellcolor{ForestGreen!40} $h^1_0(x_{6})$ & \cellcolor{ForestGreen!40} $h^1_0(x_{10})$  & \cellcolor{ForestGreen!40} $h^1_1(x_{9})$ & \cellcolor{ForestGreen!40} $h^1_1(x_{4})$ & \cellcolor{ForestGreen!40} $h^1_1(x_5)$ & \cellcolor{ForestGreen!40} $h^2_1(x_{5})$ & \cellcolor{ForestGreen!40} $h^1_2(x'_{1})$ & \cellcolor{ForestGreen!40} $h^2_2(x'_1)$\\
\hline
\cellcolor{ForestGreen!40}  $h^1_0(x_{12})$  & $0$ & $0$ & $0$ & $0$ & $0$ & $0$ & \cellcolor{WildStrawberry!40}$0$ & \cellcolor{WildStrawberry!40}$0$ & \cellcolor{WildStrawberry!40}$0$ & \cellcolor{WildStrawberry!40}$+1$ & $0$ &  $0$ \\
\hline
\cellcolor{ForestGreen!40}  $h^1_0(x_8)$ & $0$ & $0$ & $0$ & $0$ & $0$ & $0$  & \cellcolor{WildStrawberry!40}$0$ & \cellcolor{WildStrawberry!40}$0$ & \cellcolor{WildStrawberry!40}$+1$ & \cellcolor{WildStrawberry!40}$0$ & $0$ & $0$ \\
\hline
\cellcolor{ForestGreen!40}  $h^1_0(x_{11})$ & $0$ & $0$ & $0$ & $0$ & $0$ & $0$  & \cellcolor{WildStrawberry!40}$-1$ & \cellcolor{WildStrawberry!40}$0$ & \cellcolor{WildStrawberry!40}$0$ & \cellcolor{WildStrawberry!40}$-1$ & $0$ & $0$ \\
\hline
\cellcolor{ForestGreen!40}  $h^1_0(x_{7})$ & $0$ & $0$ & $0$ & $0$ & $0$ & $0$  & \cellcolor{WildStrawberry!40}$0$ & \cellcolor{WildStrawberry!40}$+1$ & \cellcolor{WildStrawberry!40}$-1$ & \cellcolor{WildStrawberry!40}$0$ & $0$ & $0$ \\
\hline
\cellcolor{ForestGreen!40}  $h^1_0(x_{6})$ & $0$ & $0$ & $0$ & $0$ & $0$ & $0$  & \cellcolor{WildStrawberry!40}$0$ & \cellcolor{WildStrawberry!40}$-1$ & \cellcolor{WildStrawberry!40}$0$ & \cellcolor{WildStrawberry!40}$0$ & $0$ & $0$ \\
\hline
\cellcolor{ForestGreen!40}  $h^1_0(x_{10})$ & $0$ & $0$ & $0$ & $0$ & $0$ & $0$ & \cellcolor{WildStrawberry!40}\tikz[baseline=(current bounding box.center)]\node[draw, circle, fill=white, inner sep=0.1pt]{+1};& \cellcolor{WildStrawberry!40}$0$ & \cellcolor{WildStrawberry!40}$0$ & \cellcolor{WildStrawberry!40}$0$ & $0$ & $0$ \\
\hline
\cellcolor{ForestGreen!40}  $h^1_1(x_{9})$ & $0$ & $0$ & $0$ & $0$ & $0$  & $0$ & $0$ & $0$ & $0$ & $0$ & \cellcolor{MidnightBlue!40}$0$ & \cellcolor{MidnightBlue!40}$0$ \\
\hline
\cellcolor{ForestGreen!40}  $h^1_1(x_{4})$ & $0$ & $0$ & $0$ & $0$ & $0$ & $0$ & $0$ & $0$ & $0$ & $0$ &\cellcolor{MidnightBlue!40} $0$ & \cellcolor{MidnightBlue!40}$0$ \\
\hline
\cellcolor{ForestGreen!40}  $h^1_1(x_{5})$ & $0$ & $0$ & $0$ & $0$ & $0$  & $0$ & $0$ & $0$ & $0$ & $0$ & \cellcolor{MidnightBlue!40}$0$ & \cellcolor{MidnightBlue!40}$0$ \\
\hline
\cellcolor{ForestGreen!40}  $h^2_1(x_{5})$ & $0$ & $0$ & $0$ & $0$ & $0$  & $0$ & $0$ & $0$ & $0$ & $0$ & \cellcolor{MidnightBlue!40}$0$ & \cellcolor{MidnightBlue!40}$0$ \\
\hline
\cellcolor{ForestGreen!40}  $h^1_2(x'_{1})$ & $0$ & $0$ & $0$ & $0$ & $0$ & $0$ & $0$ & $0$ & $0$ & $0$ & $0$ & $0$ \\
\hline
\cellcolor{ForestGreen!40}  $h^2_2(x'_{1})$ & $0$ & $0$ & $0$ & $0$ & $0$ & $0$ & $0$ & $0$ & $0$ & $0$ & $0$ & $0$ \\
\hline
\end{tabular}
}
\caption{Matrix  associated with the GGS chain complex of $(M_1,X_1)$.}
\label{table:2}
\end{table}

The differential $d^1_{6}$ determines the homotopical dynamical cancellations of $h^1_1(x_9)$ and  $h^1_0(x_{10})$. In fact, since $n(h^1_1(x_9),h^1_0(x_{10}))=+1$, one can cancel these generators together with $h^1_0(x_{11})$, giving rise to the new singularity $x'_{10}$  of type $\mathcal{W}_2$ and attracting nature, as we can see in $(M_2,X_2)$ in Figure \ref{fig:exemplo_final_2}. The matrix associated to the GGS chain complex of $(M_2,X_2)$ is given by Table \ref{table:3}.

\begin{figure}[!ht]
\centering
\begin{overpic}[unit=1mm, scale=.25, width=10cm]{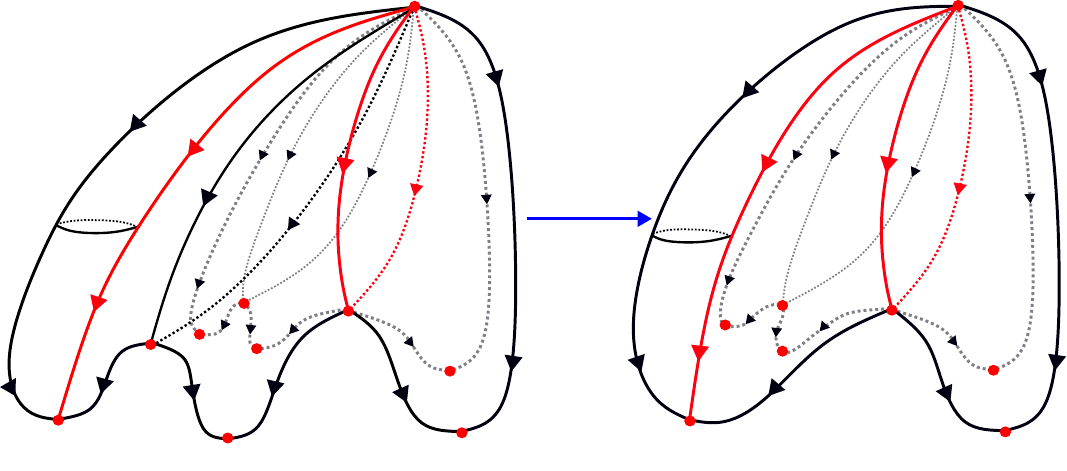}
        \put(0,36){$(M_1,X_1)$}
        \put(38,45){$x'_1$}
        \put(23,16.5){\footnotesize{$x_4$}}
        \put(31,12){$x_5$}
        \put(17,11){\footnotesize{$x_6$}}
        \put(22.5,9.3){\footnotesize{$x_7$}}
        \put(41,7){$x_8$}
        \put(12.5,9.3){$x_9$}
        \put(3.5,2.2){$x_{10}$}
        \put(19,0.5){$x_{11}$}
        \put(41.5,1){$x_{12}$}
        %%%%
        \put(59,40){$(M_2, X_2)$}
        \put(89,45){$x'_1$}
        \put(74,16.2){\footnotesize{$x_4$}}
        \put(82,12){$x_5$}
        \put(66.8,11){\footnotesize{$x_6$}}
        \put(71.3,9.3){\footnotesize{$x_7$}}
        \put(92,7){$x_8$}
        \put(63,1.5){$x'_{10}$}
        \put(92,1.5){$x_{12}$}
\end{overpic}
\caption{GGS manifold $M$ after first cancellation and after second cancellation.}
\label{fig:exemplo_final_2}
\end{figure}

\begin{table}
\centering
{
\begin{tabular}{|c|c|c|c|c|c|c|c|c|c|c|c|c| } 
\hline
& \cellcolor{ForestGreen!40} $h^1_0(x_{12})$ & \cellcolor{ForestGreen!40} $h^1_0(x_8)$ & \cellcolor{ForestGreen!40}  $h^1_0(x'_{10})$ & \cellcolor{ForestGreen!40} $h^1_0(x_{7})$  & \cellcolor{ForestGreen!40} $h^1_0(x_{8})$  & \cellcolor{ForestGreen!40} $h^1_1(x_{4})$ & \cellcolor{ForestGreen!40} $h^1_1(x_{5})$ & \cellcolor{ForestGreen!40} $h^2_1(x_5)$ & \cellcolor{ForestGreen!40} $h^1_2(x'_{1})$ &\cellcolor{ForestGreen!40}  $h^2_2(x'_1)$\\
\hline
\cellcolor{ForestGreen!40} $h^1_0(x_{12})$ & $0$ & $0$ & $0$ & $0$ & $0$ & \cellcolor{WildStrawberry!40}$0$ & \cellcolor{WildStrawberry!40}$0$ & \cellcolor{WildStrawberry!40}$+1$  & $0$ & $0$ \\
\hline
\cellcolor{ForestGreen!40} $h^1_0(x_8)$ & $0$ & $0$  & $0$ & $0$ & $0$  & \cellcolor{WildStrawberry!40}$0$ & \cellcolor{WildStrawberry!40}$+1$ & \cellcolor{WildStrawberry!40}$0$  & $0$ & $0$ \\
\hline
\cellcolor{ForestGreen!40} $h^1_0(x'_{10})$ & $0$ & $0$ & $0$ & $0$ & $0$  & \cellcolor{WildStrawberry!40}$0$ & \cellcolor{WildStrawberry!40}$0$ & \cellcolor{WildStrawberry!40}$-1$ & $0$ & $0$ \\
\hline
\cellcolor{ForestGreen!40} $h^1_0(x_{7})$ & $0$  & $0$ & $0$ & $0$ & $0$  & \cellcolor{WildStrawberry!40}$+1$ & \cellcolor{WildStrawberry!40}$-1$ & \cellcolor{WildStrawberry!40}$0$  & $0$ & $0$ \\
\hline
\cellcolor{ForestGreen!40} $h^1_0(x_{6})$ & $0$  & $0$ & $0$ & $0$ & $0$ & \cellcolor{WildStrawberry!40}\tikz[baseline=(current bounding box.center)]\node[draw, circle, fill=white, inner sep=0.3pt]{-1};& \cellcolor{WildStrawberry!40}$0$ & \cellcolor{WildStrawberry!40}$0$  & $0$ & $0$ \\
\hline
\cellcolor{ForestGreen!40} $h^1_1(x_{4})$ & $0$  & $0$ & $0$ & $0$  & $0$ & $0$ & $0$ & $0$  & \cellcolor{MidnightBlue!40}$0$ & \cellcolor{MidnightBlue!40}$0$ \\
\hline
\cellcolor{ForestGreen!40} $h^1_1(x_{5})$ & $0$  & $0$ & $0$ & $0$ & $0$ & $0$ & $0$ & $0$ & \cellcolor{MidnightBlue!40}$0$ & \cellcolor{MidnightBlue!40}$0$ \\
\hline
\cellcolor{ForestGreen!40} $h^2_1(x_{5})$ & $0$  & $0$ & $0$ & $0$  & $0$ & $0$ & $0$ & $0$  &  \cellcolor{MidnightBlue!40}$0$ &  \cellcolor{MidnightBlue!40}$0$ \\
\hline
\cellcolor{ForestGreen!40} $h^1_2(x'_{1})$ & $0$ & $0$ & $0$ & $0$ & $0$ & $0$ & $0$  & $0$ & $0$ & $0$ \\
\hline
\cellcolor{ForestGreen!40} $h^2_2(x'_{1})$ & $0$ & $0$ & $0$ & $0$ & $0$ & $0$ & $0$  & $0$ & $0$ & $0$\\
\hline
\end{tabular}
}
\caption{Matrix  associated with the GGS chain complex of $(M_2,X_2)$.}
\label{table:3}
\end{table}

\begin{figure}[!ht]
\centering
\begin{overpic}[unit=1mm, scale=.25, width=9cm]{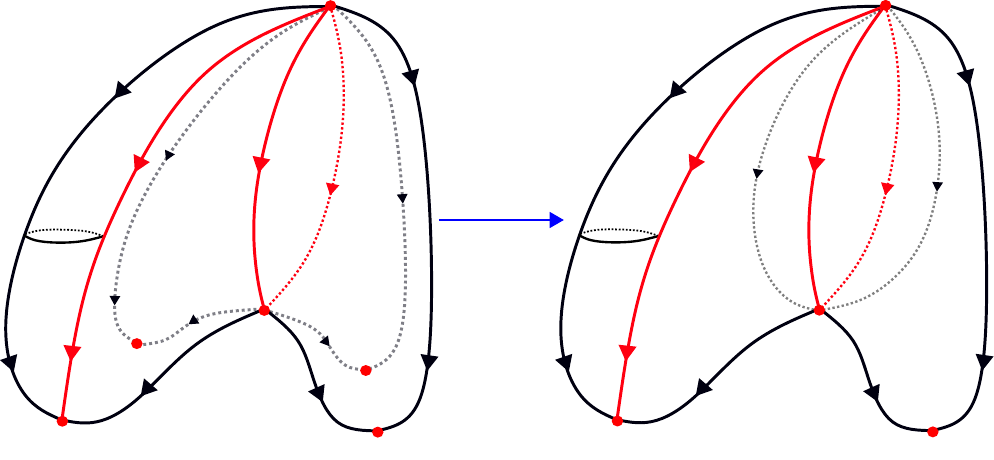}
        \put(0,41){$(M_3, X_3)$}
        \put(32,47){$x'_1$}
        \put(24.5,12){$x_5$}
        \put(10.4, 8.1){$x'_6$}
        \put(35,6){$x_8$}
        \put(4.5,0){$x'_{10}$}
        \put(36,0){$x_{12}$}
        %%%%
        \put(55,41){$(M_4, X_4)$}
        \put(88,47){$x'_1$}
        \put(80,11.4){$x'_5$}
        \put(60,0){$x'_{10}$}
        \put(92,0){$x_{12}$}
\end{overpic}
\caption{GGS manifold $M$ after third cancellation and after fourth cancellation.}
\label{fig:exemplo_final_3}
\end{figure}

The differential $d^3_7$ determines the homotopical dynamical cancellation  of $h^1_1(x_4)$ and $h^1_0(x_{6})$, which can be done since $n(h^1_1(x_4),h^1_0(x_{6}))=-1$.  Cancelling these generators together with $h^1_0(x_7)$,  gives rise to $(M_3, X_3)$ which contains an attracting regular singularity  $x'_6$, as shown in Figure \ref{fig:exemplo_final_3}. The matrix associated with the corresponding GGS chain complex is given in Table \ref{table:4}.

\begin{table}[!ht]
\centering
%\resizebox{\textwidth}{!}{
\begin{tabular}{|c|c|c|c|c|c|c|c|c|c|c|c|c| } 
\hline
& \cellcolor{ForestGreen!40} $h^1_0(x_{12})$& \cellcolor{ForestGreen!40} $h^1_0(x_8)$ & \cellcolor{ForestGreen!40} $h^1_0(x'_{10})$  & \cellcolor{ForestGreen!40} $h^1_0(x'_{6})$  &  \cellcolor{ForestGreen!40} $h^1_1(x_{5})$ & \cellcolor{ForestGreen!40} $h^2_1(x_5)$ & \cellcolor{ForestGreen!40}  $h^1_2(x'_{1})$ & \cellcolor{ForestGreen!40} $h^2_2(x'_1)$\\
\hline
\cellcolor{ForestGreen!40} $h^1_0(x_{12})$ & $0$ & $0$ & $0$ & $0$ & \cellcolor{WildStrawberry!40}$0$ & \cellcolor{WildStrawberry!40}$+1$ & $0$ & $0$  \\
\hline
\cellcolor{ForestGreen!40} $h^1_0(x_8)$ & $0$ & $0$ & $0$ & $0$ & \cellcolor{WildStrawberry!40}$+1$  & \cellcolor{WildStrawberry!40}$0$ & $0$ & $0$ \\
\hline
\cellcolor{ForestGreen!40} $h^1_0(x'_{10})$ & $0$  & $0$ & $0$ & $0$ & \cellcolor{WildStrawberry!40}$0$  & \cellcolor{WildStrawberry!40}$-1$ & $0$ & $0$  \\
\hline
\cellcolor{ForestGreen!40} $h^1_0(x'_{6})$ & $0$  & $0$ & $0$ & $0$ & \cellcolor{WildStrawberry!40}\tikz[baseline=(current bounding box.center)]\node[draw, circle, fill=white, inner sep=0.3pt]{-1}; & \cellcolor{WildStrawberry!40}$0$ & $0$ & $0$   \\
\hline
\cellcolor{ForestGreen!40} $h^1_1(x_{5})$ & $0$  & $0$ & $0$ & $0$ & $0$ & $0$ & \cellcolor{MidnightBlue!40}$0$ & \cellcolor{MidnightBlue!40}$0$  \\
\hline
\cellcolor{ForestGreen!40} $h^2_1(x_{5})$ & $0$  & $0$ & $0$ & $0$  & $0$ & $0$ & \cellcolor{MidnightBlue!40}$0$ & \cellcolor{MidnightBlue!40}$0$   \\
\hline
\cellcolor{ForestGreen!40} $h^1_2(x'_{1})$ & $0$ & $0$ & $0$ & $0$ & $0$ & $0$ & $0$  & $0$  \\
\hline
\cellcolor{ForestGreen!40} $h^2_2(x'_{1})$ & $0$ & $0$ & $0$ & $0$ & $0$ & $0$ & $0$  & $0$ \\
\hline
\end{tabular}
%}
\caption{Matrix  associated with the GGS chain complex of $(M_3,X_3)$.}
\label{table:4}
\end{table}

The differential $d^5_8$ determines the homotopical dynamical cancellation of $h^1_1(x_5)$ and $h^1_0(x'_{6})$, which can be performed since $n(h^1_1(x_5),h^1_0(x'_{6}))=-1$. Thus, cancelling these generators together with $h^1_0(x_8)$ gives rise to $(M_4, X_4)$ which contains a double saddle attractor at $x'_5$, as shown in Figure \ref{fig:exemplo_final_3}. The matrix  associated with the corresponding GGS complex is given by Table \ref{table:5}.

\begin{table}[!ht]
\centering
\begin{tabular}{|c|c|c|c|c|c|c|c|c|c|c|c|c|} 
\hline
& \cellcolor{ForestGreen!40} $h^1_0(x_{12})$ & \cellcolor{ForestGreen!40} $h^1_0(x'_{10})$  & \cellcolor{ForestGreen!40} $h^1_0(x'_{5})$  & \cellcolor{ForestGreen!40} $h^1_1(x'_5)$ & \cellcolor{ForestGreen!40} $h^1_2(x'_{1})$ & \cellcolor{ForestGreen!40} $h^2_2(x'_1)$\\
\hline
\cellcolor{ForestGreen!40} $h^1_0(x_{12})$ & $0$ & $0$ & $0$ & \cellcolor{WildStrawberry!40}$+1$ & $0$  & $0$\\
\hline
\cellcolor{ForestGreen!40} $h^1_0(x'_{10})$ & $0$  & $0$ & $0$ & \cellcolor{WildStrawberry!40}\tikz[baseline=(current bounding box.center)]\node[draw, circle, fill=white, inner sep=0.3pt]{-1}; & $0$  & $0$ \\
\hline
\cellcolor{ForestGreen!40} $h^1_0(x'_{5})$ & $0$  & $0$ & $0$ & \cellcolor{WildStrawberry!40}$0$ & $0$ & $0$  \\
\hline
\cellcolor{ForestGreen!40} $h^1_1(x'_{5})$ & $0$  & $0$ & $0$ & $0$  & \cellcolor{MidnightBlue!40}$0$ & \cellcolor{MidnightBlue!40}$0$  \\
\hline
\cellcolor{ForestGreen!40} $h^1_2(x'_{1})$ & $0$ & $0$ & $0$ & $0$ & $0$ & $0$  \\
\hline
\cellcolor{ForestGreen!40} $h^2_2(x'_{1})$ & $0$ & $0$ & $0$ & $0$ & $0$ & $0$ \\
\hline
\end{tabular}
\caption{Matrix  associated with the GGS chain complex of $(M_4,X_4)$.}
\label{table:5}
\end{table}

\begin{figure}[!ht]
\centering
\begin{overpic}[unit=1mm, scale=.25, width=9cm]{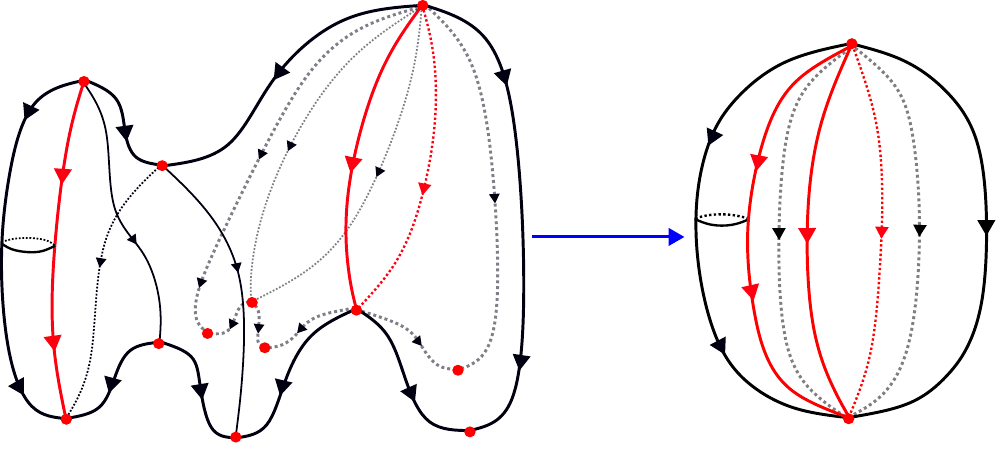}
        \put(0,45){$(M,X)$}
        \put(6.8,40.5){$x_1$}
        \put(41,48){$x_2$}
        \put(15,32){$x_3$}
        \put(25,18.5){\footnotesize{$x_4$}}
        \put(34,13){$x_5$}
        \put(19.5,11.5){\footnotesize{$x_6$}}
        \put(25,10){\footnotesize{$x_7$}}
        \put(45,7){$x_8$}
        \put(14,10){$x_9$}
        \put(4,2.5){$x_{10}$}
        \put(21,1){$x_{11}$}
        \put(45,1){$x_{12}$}
        %%%%
        \put(60,41){$(M_5,X_5)$}
        \put(84,44.2){$x'_{1}$}
        \put(84,1.5){$x'_{5}$}
    \end{overpic}
\caption{Initial GGS manifold and final manifold with core flow, after $5$ cancellations.}
\label{fig:exemplo_final_4}
\end{figure}

Finally, the differential $d^7_9$ determines the homotopical dynamical cancellation of 
$h^1_1(x'_5)$ and $h^1_0(x'_{10})$, which can be performed since $n(h^1_1(x'_5),h^1_0(x'_{10}))=-1$, Thus  cancelling these generators together with $h^1_0(x_{12})$, yields a GGS  manifold equipped with a core flow $(M_5,X_5)$, in which  $x'_1$ and $x'_5$ are of type $\mathcal{W}_2\mathcal{D}_2$ with repelling and attracting nature, respectively, as shown in Figure \ref{fig:exemplo_final_4}. The matrix  associated with the corresponding GGS chain complex is given in Table~\ref{table:6}.

\begin{table}[!ht]
\centering
%\resizebox{\textwidth}{!}{
\begin{tabular}{|c|c|c|c|c|c|c|c|c|c|c|c|c| } 
\hline
& \cellcolor{ForestGreen!40} $h^1_0(x'_5)$ & \cellcolor{ForestGreen!40} $h^2_0(x'_{5})$  & \cellcolor{ForestGreen!40} $h^1_2( x'_{1})$  & \cellcolor{ForestGreen!40} $h^2_2(x'_1)$\\
\hline
\cellcolor{ForestGreen!40} $h^1_0( x'_5)$ & $0$ & $0$ & $0$ & $0$\\
\hline
\cellcolor{ForestGreen!40} $h^2_0(x'_{5})$ & $0$  & $0$ & $0$ & $0$  \\
\hline
\cellcolor{ForestGreen!40} $h^1_2( x'_{1})$ & $0$  & $0$ & $0$ & $0$  \\
\hline
\cellcolor{ForestGreen!40} $h^2_2(x'_{1})$ & $0$  & $0$ & $0$ & $0$    \\
\hline
\end{tabular}
%}
\caption{Matrix associated with the GGS chain complex of $(M_5,X_5)$.}
\label{table:6}
\end{table}
\end{example}

% ---> COMANDO PARA PAUSAR O SUMÁRIO <---
\addtocontents{toc}{\protect\setcounter{tocdepth}{-1}}

\subsection*{Author contributions}
All authors contributed equally to the conception, development, and writing of the manuscript. All authors read and approved the final version of the paper.

\subsection*{Conflict of interest}
The authors declare no potential conflict of interests.

% ---> COMANDO PARA LIGAR O SUMÁRIO NOVAMENTE <---
% (Isso garante que as "References" continuem aparecendo no sumário)
\addtocontents{toc}{\protect\setcounter{tocdepth}{1}}

\bibliographystyle{amsplain} % Estilo padrão da AMS (numérico). Se preferir alfabético, use {amsalpha}
\bibliography{references}   % O nome do seu arquivo .bib (sem a extensão .bib)

\end{document}